\numberwithin{equation}{section}
\newtheorem{theorem}{Theorem}[section]
\newtheorem{lemma}{Lemma}
\newtheorem{corollary}{Corollary}
\newcommand\Item[1][]{%
  \ifx\relax#1\relax  \item \else \item[#1] \fi
  \abovedisplayskip=0pt\abovedisplayshortskip=0pt~\vspace*{-\baselineskip}}
\theoremstyle{definition}
\theoremstyle{definition}
\newtheorem{remark}{Remark}
\newcommand{\sr}{\textcolor{red}}
\DeclareMathOperator{\Prob}{\mathbf{P}}
\DeclareMathOperator{\E}{\mathbf{E}}
\title[]{Percolation games on rooted, edge-weighted random trees}
\date{}
\author{Sayar Karmakar, Moumanti Podder, Souvik Roy, Soumyarup Sadhukhan}
\address{Sayar Karmakar, University of Florida, 230 Newell Drive, Gainesville, Florida 32605, USA.}
\address{Moumanti Podder, Indian Institute of Science Education and Research (IISER) Pune, Dr.\ Homi Bhabha Road, Pashan, Pune 411008, Maharashtra, India.}
\address{Souvik Roy, Indian Statistical Institute, 203 Barrackpore Trunk Road, Kolkata 700108, West Bengal, India.}
\address{Soumyarup Sadhukhan, Indian Institute of Technology, Kalyanpur, Kanpur, Uttar Pradesh 208016, India.}
\email{sayarkarmakar@ufl.edu}
\email{moumanti@iiserpune.ac.in}
\email{souvik.2004@gmail.com}
\email{soumyarup.sadhukhan@gmail.com}
\begin{document}
\bibliographystyle{plainnat}

\begin{abstract}
 Consider a rooted Galton-Watson tree $T$, to each of whose edges we assign, independently, a weight that equals $+1$ with probability $p_{1}$, $0$ with probability $p_{0}$ and $-1$ with probability $p_{-1}=1-p_{1}-p_{0}$. We play a game on a realization of this rooted, edge-weighted Galton-Watson tree, involving two players and a token. The token is allowed to be moved from where it is currently located, say a vertex $u$ of $T$, to any child $v$ of $u$. The players begin with initial capitals that amount to $i$ and $j$ units respectively, and a player wins if 
\begin{enumerate*}
\item either she is the first to amass a capital worth $\kappa$ units, where $\kappa$ is a prespecified positive integer,
\item or she is able to move the token to a leaf vertex, from where her opponent cannot move it any farther,
\item or her opponent's capital is the first to dwindle to $0$.
\end{enumerate*}
The game may continue indefinitely, with neither player being able to clinch a victory, in which case we say that the game results in a draw. This paper is concerned with analyzing the probabilities of the three possible outcomes such a game may culminate in, namely a win for the first player (i.e.\ the player who plays the first round of the game), a loss for the first player, and a draw for both players, as well as with finding conditions under which the expected duration of this game is finite. Of particular interest to us is the exploration of criteria that guarantee the probability of draw in such a game to be $0$. The theory we develop in this paper for the analysis of this game is further supported by observations obtained via computer simulations, and these observations provide a deeper insight into how the above-mentioned probabilities behave as the underlying parameters and / or offspring distributions are allowed to vary. We include in this paper conjectures pertaining to the behaviour of the probability of draw in our game (including a phase transition phenomenon, in which the probability of draw goes from being $0$ to being strictly positive) as the parameter-pair $(p_{0},p_{1})$ is varied suitably while keeping the underlying offspring distribution of $T$ fixed. \end{abstract}
\subjclass[2020]{91A46, 91A43, 60K35, 82B43, 05C57, 37B15, 37A25, 68Q80} 

\keywords{bond percolation; bond percolation games on rooted trees; two-player combinatorial games; edge-weighted, rooted Galton-Watson trees; probability of draw; phase transition phenomenon}
\maketitle

\section{Introduction}\label{sec:intro}
To each edge of any realization of a \emph{rooted Galton-Watson tree} $T$, we assign, independently, a weight that equals $+1$ with probability $p_{1}$, $0$ with probability $p_{0}$, and $-1$ with probability $p_{-1}=(1-p_{1}-p_{0})$. A \emph{percolation game} is then played on this \emph{rooted, edge-weighted tree}, involving two players and a token. The players alternate in making \emph{moves}, where a \emph{move} permits the token to be displaced from where it is currently located, say a vertex $u$ of $T$, and to be situated on any child $v$ of $u$ (if at least one such child exists). When the token is moved from $u$ to $v$, the player who performs this move is rewarded with an amount equal to the edge-weight assigned to the edge connecting $u$ with $v$. The players begin with \emph{initial capitals} worth $i$ and $j$ units respectively, with $i, j \in \mathbb{N}$, and a player wins if 
\begin{enumerate*}
\item either she is the first to amass a capital worth $\kappa$ units, where $\kappa$ is a pre-specified positive integer,
\item or she is able to move the token to a leaf vertex, from where her opponent cannot move it any farther,
\item or her opponent is the first to have her capital dwindle to $0$.
\end{enumerate*}
The game may continue indefinitely, with neither player being able to clinch a victory, in which case we say that the game results in a draw. This paper is concerned with studying the probabilities of the three possible outcomes (i.e.\ win for the first player, loss for the first player, and draw for both players) of this game, necessary and / or sufficient criteria that guarantee that the probability of draw equals $0$, as well as finding conditions under which the expected duration of this game is finite.

The primary motivation for this work can be attributed to \cite{holroyd2021galton}, in which \emph{normal}, \emph{mis\`{e}re} and \emph{escape games} have been studied on rooted Galton-Watson trees. Each of these games is a \emph{two-player combinatorial game} involving two players and a token, just as it is the case for the percolation games described in the previous paragraph. The permitted moves in each of these games are the same as those for the percolation games studied in this paper, i.e.\ the token may be moved from the vertex where it is currently situated, to any one of its children. The normal game is lost by whichever of the two players is unable to move the token any farther (i.e.\ her opponent has succeeded in moving the token to a leaf vertex), whereas the mis\`{e}re game is won by whichever of the two players is the first to become stagnated at a leaf vertex. The escape game is played between players titled \emph{Stopper} and \emph{Escaper}, and Stopper wins if \emph{either} of the two players becomes confined to a leaf vertex.

A natural extension of the idea of normal games presented above can be achieved by simply labeling each edge of the rooted Galton-Watson tree $T$, independently, a \emph{trap} with some probability $p_{-1}$, and \emph{open} or \emph{safe} with the remaining probability $(1-p_{-1})$. The moves permitted remain the same as before, and a player now loses if her opponent either moves the token to a leaf vertex (thus preventing her from being able to move the token any farther, thereafter), or if she is forced to move the token \emph{along} an edge that has been labeled a trap (which happens only if her opponent moves the token to some vertex $u$ of $T$ such that, for \emph{every} child $v$ of $u$, the edge between $u$ and $v$ has been labeled a trap). This is the same as pruning from $T$ every edge that has been labeled a trap (thus turning a vertex $u$ into a leaf vertex whenever the edge between $u$ and $v$ is a trap for \emph{every} child $v$ of $u$), and \emph{then} playing the normal game on this pruned tree. A further generalization is attained if we now allow each edge of $T$ to be labeled, independently, a \emph{trap} with probability $p_{-1}$, a \emph{target} with probability $p_{1}$, and \emph{safe} or \emph{open} with the remaining probability $p_{0}=(1-p_{-1}-p_{1})$. A player now may win in one of three possible ways:
\begin{enumerate*}
\item either by moving the token along an edge labeled a target,
\item or forcing her opponent to move the token along an edge labeled a trap,
\item or preventing her opponent from moving any farther, by placing the token at a leaf vertex of $T$.
\end{enumerate*}
This game, too, can be simplified to a normal game by modifying the underlying tree $T$ as follows: we remove every edge of $T$ that has been labeled a trap, and for every edge $(u,v)$ (between a vertex $u$ and its child $v$) that has been labeled a target, we remove the entire subtree of $T$ induced on $v$ and its descendants. 

As a consequence of the discussion in the previous paragraph, one may wonder how a true generalization / extension of the normal games studied in \cite{holroyd2021galton} can be achieved -- one that cannot be reduced to the relatively simple set-up of normal games no matter how we tweak the underlying tree $T$. To this end, we ask: what happens if we allow the game rules to be lenient enough that a \emph{single} traversal of an edge that is not safe / open does not \emph{necessarily} cause the game to come to an end? In other words, what if a player must be forced to make the mistake of traversing trap-edges a ``sufficient" number of times, or her opponent must succeed in moving the token along target-edges a ``sufficient" number of times, in order for her to be declared the loser? This is a rather natural and spontaneous extension, and we formalize this by replacing the label of \emph{trap} with the edge-weight $-1$, the label of \emph{target} with the edge-weight $+1$, and the label of \emph{safe} or \emph{open} with the edge-weight $0$. The game now continues until one of the players has squandered away her entire capital, or one of them has amassed a capital worth $\kappa$ units, where $\kappa \in \mathbb{N}$ is prespecified (or if one of them becomes stranded at a leaf vertex). This is how the investigation that began with \cite{holroyd2021galton} has paved the way for the work accomplished in this paper.

We now summarize, informally and briefly, some of our main contributions in this paper. To the best of our knowledge, the set-up we consider in this paper has not been addressed in the existing literature, and, as mentioned above, this is one of the most natural ways one may seek to generalize the set-up studied in \cite{holroyd2021galton}. Theorem~\ref{thm:1} gives a complete characterization of the probabilities of the various outcomes that our game may culminate in -- this is done by describing these probabilities as coordinates of \emph{specific fixed points} of \emph{specific multivariable functions} (that are defined in terms of $p_{-1}$, $p_{1}$, $p_{0}$ and the offspring distribution underlying the rooted Galton-Watson tree on which the game is being played). While the representation of the probabilities of the various outcomes of a two-player combinatorial game, played on a random board / random rooted tree, in terms of the fixed point(s) of a function is not surprising in itself (for instance, such results can be found in \cite{holroyd2021galton}), the novelty lies in the fact that in this paper, the far more generalized set-up forces us to consider the fixed points of a multivariable function (in fact, we visualize this function, $h$, as defined in \eqref{fixed_point_eq}, as mapping from $\mathcal{S}$ to $\mathcal{S}$, where $\mathcal{S}$ is the space of all $(\kappa-1)\times (\kappa-1)$ matrices in which each entry is in the interval $[0,1]$). Determining whether such a function has a unique fixed point in $\mathcal{S}$ (which happens if and only if the probability of draw equals $0$ for the game that begins with initial capital amounts $i$ and $j$ units, for each $i,j \in \{1,2,\ldots,\kappa-1\}$) or not turns out to be a far harder task than when we consider single-variable functions. If we let $w_{i,j}$ (respectively, $\ell_{i,j}$) denote the probability of the event that the first player wins (respectively, loses) the game that begins with an initial capital of $i$ units for the first player and an initial capital of $j$ units for the second player, then the statement of Theorem~\ref{thm:1} as well as the recurrence relations arising from our game rules indicate that all $w_{i,j}$ and $\ell_{i,j}$, for $i,j \in \{1,2,\ldots,\kappa-1\}$, are very closely related to each other. As a further testimony to this, we see that Theorem~\ref{thm:2} connects all $d_{i,j}$s, where $d_{i,j}$ is the probability of the event that the game that begins with the first player possessing an initial capital of $i$ units and the second player possessing an initial capital of $j$ units, results in a draw. For instance, we show that when each of $p_{-1}$, $p_{1}$ and $p_{0}$ is strictly positive, either $d_{i,j}=0$ for all $i,j \in \{1,2,\ldots,\kappa-1\}$, or $d_{i,j}>0$ for all $i,j \in \{1,2,\ldots,\kappa-1\}$. It is not possible to observe such a fascinating phenomenon in simpler set-ups such as that considered in \cite{holroyd2021galton}, and this opens up the possibility of exploring phase transition phenomena pertaining to the probabilities of draw from a different perspective. Theorem~\ref{thm:3} presents sufficient conditions ensuring that the expected duration of our game is finite. Theorem~\ref{thm:kappa=2} addresses the case of $\kappa=2$, and presents not only very neat results, but also a complete picture describing the phase transition phenomenon pertaining to the probability of draw (i.e.\ how the probability of draw, $d_{1,1}$, goes from being $0$ to being strictly positive as the parameters $p_{-1}$ and $p_{1}$ are allowed to vary), under various commonly studied offspring distributions. Such neat results are hard to obtain when $\kappa \geqslant 3$ is considered, due to the lack of existing theory (to the best of our knowledge) on fixed points of multivariable functions. Despite this difficulty, Theorem~\ref{thm:kappa=3} and Theorem~\ref{thm:kappa=3_special} provide sufficient criteria under which the probabilities of draw (i.e.\ $d_{i,j}$, for $i,j \in \{1,2\}$) equal $0$ when the target capital is $\kappa=3$ (while Theorem~\ref{thm:kappa=3} does this for arbitrary values of $p_{-1}$, $p_{1}$ and $p_{0}$ and for general offspring distributions, Theorem~\ref{thm:kappa=3_special} does this when $p_{-1}$, $p_{0}$ and $p_{1}$ obey certain constraints, and assumptions are made on the offspring distribution under consideration).     

\subsection{Organization of the rest of the paper} We conclude \S\ref{sec:intro} with a description of how the rest of the paper has been organized. We motivate the study of percolation games on rooted, edge-weighted Galton-Watson trees in \S\ref{sec:further_motivations}. Although the games and the set-up (or `random board') on which they are being played have already been informally introduced above, we provide a formal introduction to these in \S\ref{sec:twogames}, with \S\ref{subsec:deterministic_game} dedicated to describing the game on a \emph{deterministic} rooted tree with \emph{fixed} edge-weights, and \S\ref{subsec:random_game} dedicated to describing the game when it is played on a \emph{rooted Galton-Watson tree} with \emph{random} edge-weights assigned, independently, to all the edges. The main results of this paper are stated in \S\ref{subsec:main_results}, and since an application of many of these results requires verifying whether a (somewhat complicated) multivariable function has a unique fixed point in a given domain or not -- a feat best achieved via analyzing this function using a computer -- this is followed by \S\ref{sec:simulations}, where we present our findings via numerical simulations in the form of tables. In particular, in \S\ref{sec:simulations}, we numerically estimate the probabilities of draw, $d_{i,j}$ (of the game in which the first player begins with an initial capital of amount $i$ units, and the second player begins with an initial capital of $j$ units), by making use of Theorem~\ref{thm:1}, for various values of $p_{0}$ and $p_{1}$ and for various underlying offspring distributions, and try to glean a pattern from the observed values regarding the occurrence of a phase transition phenomenon (where each $d_{i,j}$ goes from being $0$ to being strictly positive). We also present tables showing various instances (for various values of $p_{0}$ and $p_{1}$ and for a couple of offspring distributions) where Theorem~\ref{thm:kappa=3} is applicable, and some instances where it is not, but nonetheless, these instances reveal fascinating patterns regarding when one may expect each $d_{i,j}$ to be strictly positive. Finally, the detailed proofs of our results are presented in \S\ref{sec:formal_proofs}, with \S\ref{subsec:notations_definitions} summarizing the notations used in the rest of the paper as well as some fundamental observations / results needed for the analysis of our game, \S\ref{subsec:recurrence} explaining the recurrence relations that arise from our game rules, \S\ref{subsec:thm:1_proof} outlining the proof of Theorem~\ref{thm:1}, \S\ref{sec:proof_thm_2} outlining the proof of Theorem~\ref{thm:2}, \S\ref{sec:proof_thm_3} containing the proof of Theorem~\ref{thm:3}, \S\ref{sec:kappa=2_proofs} containing the proof of Theorem~\ref{thm:kappa=2}, and \S\ref{sec:kappa=3_proofs} outlining the proofs of Theorems~\ref{thm:kappa=3} and \ref{thm:kappa=3_special}.   

\section{Literature Review and Motivations for studying the weighted-tree-percolation games}\label{sec:further_motivations}
The weighted-tree-percolation game can be interpreted as a simplistic model for \emph{oligopolistic competitions}. Imagine two companies, denoted $C_{1}$ and $C_{2}$, locked in an oligopolistic competition, in a market that does not permit the entry of a third company. It is assumed that a company is forced to concede defeat if it goes bankrupt, i.e.\ the short term capital loss that it sustains equals or exceeds the initial capital amount it entered the competition with, and a company is assumed to emerge decisively as the winner, eliminating its opponent from the race (i.e.\ by becoming a monopoly), if it becomes the first among the two to accumulate a net capital that is beyond a certain pre-prescribed margin. We assume that this competition unfolds on a rooted tree, $t$, each of whose edges has been assigned a weight: this weight amounts to a profit or reward when it is positive, to a loss or penalty when it is negative, and it serves as neither profitable nor damaging when it equals $0$. Traversing an edge by a company adds to its current capital the weight associated with that edge. Since in an oligopolistic market, the decision made by each company is bound to impact the other, we can imagine that a company traversing an edge $(u,v)$ of $t$ is equivalent to an \emph{action} or \emph{move} performed (by that company), and the subtree $t_{v}$ of $t$, induced on the subset of vertices of $t$ comprising $v$ and its descendants, along with the weights assigned to the edges of $t_{v}$, is the \emph{state} of the game after the above-mentioned action has been completed. Following the notion of sequential games, we assume that each company waits for its opponent to finish making their move, before deciding on a subsequent action of their own. The game may continue indefinitely, without either company being able to establish itself as the decisive superior, in which case we say that the game has resulted in a draw.

A second theoretical motivation for studying these games lies in how they incorporate an \emph{adversarial element} into the otherwise random process of \emph{percolation}. Percolation is a topic of immense interest spanning physics, chemistry and mathematics, and extensive research has been conducted in this area (see \cite{broadbent1957percolation} that introduced the notion of percolation, as well as \cite{grimmett1999percolation}, \cite{stauffer2018introduction}, \cite{kesten1982percolation}, \cite{shante1971introduction} and \cite{toom2001contours}, to name just a few). Two types of percolation are usually considered, namely \emph{site percolation} and \emph{bond percolation}. In the former, each \emph{vertex} of a given graph is assigned, independently, a label that reads \emph{closed} with some probability $p$, and \emph{open} with the remaining probability $1-p$. In the latter, such a labeling is applied, instead, to the \emph{edges} of a given graph (often, the edges of such a graph are \emph{directed}, in which case we refer to the process as \emph{oriented bond percolation}). A collection of vertices of the given graph is specified to be the \emph{source}, and another collection, disjoint from the source, is specified to be the \emph{sink}. The primary question asked when studying bond percolation is: for what values of $p$ (and of any underlying parameter(s) that influence the structure of the graph under consideration) does there exist, with positive probability, a path consisting of open edges leading \emph{from} the source \emph{to} the sink? This class of questions includes asking, when studying bond percolation on a rooted infinite tree with each edge directed \emph{away} from the root (and the root being the source): for what values of $p$ does there exist, with positive probability, an infinite path that starts at the root and consists only of open edges? 

The understanding of the phenomenon of percolation (and its variants) on trees and tree-like graphs is rather different from that on infinite lattice graphs. This paragraph enumerates only a handful of the long list of studies done on percolation, and its variants, on (both deterministic and random) trees. In \cite{lyons1990random}, the notion of the \emph{branching number} of a rooted tree (in some sense, the ``average" number of branches sent forth by each vertex of the tree) is defined, and it is shown to be fundamental in the study of probabilistic processes, including bond percolation, on rooted trees. In \cite{lyons1992random}, an exact correspondence between random walks and percolation on rooted trees is proved, and this correspondence is utilized to determine whether certain random walks on random trees are transient or recurrent. In \cite{pemantle1992random}, the branching number is shown to determine the rate of first-passage percolation on trees. In \cite{lyons1989ising}, a general percolation process on trees, termed \emph{quasi-Bernoulli percolation}, is introduced, and it is utilized in studying \emph{spin percolation} in the \emph{ferromagnetic Ising model}, in the absence of any external magnetic field, on trees, and in studying \emph{Bernoulli percolation} on \emph{tree-like graphs}. In \cite{haggstrom1997infinite}, \emph{dependent bond percolation} on the \emph{homogeneous tree} of order $n$ (i.e.\ the connected, cycle-free infinite graph in which each vertex has degree $n+1$) is studied under the assumption of \emph{automorphism invariance}. In \cite{baur2016percolation}, \emph{Bernoulli bond percolation} on a \emph{random recursive tree} of size $n$, with the percolation parameter $p(n)$ converging to $1$ as $n$ approaches $\infty$, is studied. \emph{Dynamical percolation} is introduced and studied in \cite{olle1997dynamical}, \cite{peres1998number}, \cite{khoshnevisan2008dynamical} and \cite{steif2009survey}. Two other variants of percolation, namely \emph{invasion percolation} and \emph{accessible percolation}, of which the latter is inspired by evolutionary biology, have been studied in \cite{angel2008invasion} and \cite{nowak2013accessibility} respectively.

\sloppy We include here a brief discussion of some of the two-player combinatorial games (such as the weighted-tree-percolation game itself), studied in the literature, that have been inspired by percolation. The notion of \emph{site percolation games} was introduced by \cite{holroyd2019percolation}, as follows: each vertex $(x,y) \in \mathbb{Z}^{2}$ is marked, independent of all else, a \emph{trap} with probability $p$, a \emph{target} with probability $q$, and \emph{open} with the remaining probability $1-p-q$. Two players take turns to make \emph{moves}, where a \emph{move} involves relocating a token from where it is currently located, say $(x,y)$, to one of $(x+1,y)$ and $(x,y+1)$. A player wins if she is either able to move the token to a vertex labeled a target, or force her opponent to move the token to a vertex labeled a trap. A variant of this game is studied in \cite{bhasin2022class}, with the token now permitted to be moved from where it is currently located, say $(x,y)$, to any one of $(x,y+1)$, $(x+1,y+1)$ and $(x+2,y+1)$. In each of \cite{holroyd2019percolation} and \cite{bhasin2022class}, it is shown, via the technique of \emph{weight functions} or \emph{potential functions}, that the probability of draw in the game under consideration is $0$ whenever $p+q > 0$. In \cite{bhasin2022ergodicity}, a somewhat more general version of the game studied in \cite{holroyd2019percolation} is investigated, with the token now permitted to be moved from $(x,y)$ to any of $(x,y+1)$ and $(x+1,y+1)$ when $x$ is even, and to any of $(x+1,y+1)$ and $(x+2,y+1)$ when $x$ is odd. In \cite{bhasin2024bond}, \emph{bond percolation games} on the $2$-dimensional square lattice have been investigated. Each edge of the lattice, which is either between $(x,y)$ and $(x+1,y)$ or between $(x,y)$ and $(x,y+1)$, for some $(x,y) \in \mathbb{Z}^{2}$, is labeled, independently, a trap with probability $p$, a target with probability $q$, and open with probability $(1-p-q)$, and the moves permitted are the same as those in \cite{holroyd2019percolation}. The primary question attempted to answer is: for what values of $(p,q)$ is the probability of draw in this game equal to $0$? Finally, in \cite{basu2016trapping}, another two-player combinatorial game is studied on percolation clusters of Euclidean lattices, in which a move involves relocating the token from where it is currently located, to a neighbour of that vertex, along an edge that has not been previously visited, and whichever player is unable to move any farther, loses.

The extensively studied \emph{Maker-Breaker positional games} (see, for instance, \cite{stojakovic2005positional}, \cite{muller2014threshold} and \cite{nenadov2016threshold}, to name just a few) paved the way for the introduction of the \emph{Maker-Breaker percolation games} in \cite{day2021maker} and \cite{day2021makerescaping}. In \cite{day2021maker}, given $b, m \in \mathbb{N}$, and an $M \times N$ rectangular grid-graph (with $M$ vertices in each row and $N$ vertices in each column), the player called Maker, in each of her turns, chooses $m$ of the as-yet-unclaimed edges of this board and marks them safe, while her opponent, called Breaker, in each of her turns, chooses $b$ of the as-yet-unclaimed edges of this board and deletes them. Maker clinches a victory if she is able to mark as safe all the edges constituting a path connecting the left margin of this board to its right. In \cite{day2021makerescaping}, given an infinite connected graph $\Lambda$ and a specific vertex $v_{0}$ in it, once again, Maker, in each of her turns, chooses $m$ of the as-yet-unclaimed edges of $\Lambda$ and marks them safe, while Breaker, in each of her turns, chooses $b$ of the as-yet-unclaimed edges of $\Lambda$ and deletes them. Breaker wins if during any round of this game, the component of $\Lambda$ containing $v_{0}$ becomes finite. In \cite{dvovrak2021maker}, the Maker-Breaker percolation game with $m=b=1$ (termed the \emph{unbiased} version) is studied on the random premise that is obtained after the usual bond percolation process with parameter $p$ has been performed on the edges of the $2$-dimensional square lattice $\mathbb{Z}^{2}$. The same Maker-Breaker percolation game as the one investigated in \cite{day2021maker} is addressed in \cite{wallwork2022maker}, but on a triangular grid-graph that consists of $N$ rows, with $M$ and $M-1$ vertices per row alternating upward. This is a relatively new area in which many fascinating questions remain open, and delving deeper into bond percolation games, as well as their generalizations such as the weighted-tree-percolation games, may very well establish a connection via which this area can be explored.

\section{Model Description}\label{sec:twogames}
\subsection{The bond percolation game on a deterministic edge-weighted tree}\label{subsec:deterministic_game}
Let $t$ denote a rooted (finite or infinite) tree in which each edge has been assigned a \emph{weight} (which takes an integer value). We let $\phi(t)$ denote the root of $t$, and we let $V(t)$ denote its set of vertices. Given vertices $u, v \in V(t)$ with $u$ the parent of $v$, we imagine the edge between $u$ and $v$ to be directed \emph{from} $u$ \emph{towards} $v$, and henceforth, denote this directed edge by $(u,v)$. We let $\omega_{t}(u,v)$ denote the weight assigned to $(u,v)$. In this paper, we shall only be concerned with $\omega_t(u,v) \in \{1,0,-1\}$, for all directed edges $(u,v)$ of $t$.

Our object of interest in this paper is the \emph{bond percolation game on a rooted edge-weighted tree}, henceforth abbreviated as the \emph{weighted-tree-percolation game}, played, as the name suggests, on a rooted edge-weighted tree. Such a game is indicated by $\mathcal{G}(i,j,\kappa,t)[v^*]$, where 
\begin{enumerate}
\item $i,j \in \mathbb{N}_{0}$ and $\kappa \in \mathbb{N}$ are the underlying parameters, with 
\begin{equation}
(i,j) \in \mathcal{R}, \text{ where } \mathcal{R} = \{0,1,\ldots,\kappa\}^{2} \setminus \{(0,0), (\kappa,\kappa)\},\label{i,j_domain}
\end{equation} 
\item $t$ is the rooted edge-weighted tree on which the game is played, 
\item and $v^* \in V(t)$ is a vertex of the tree $t$, referred to as the \emph{initial vertex}.
\end{enumerate}
Here and elsewhere, we use $\mathbb{N}$ to indicate the set of all positive integers, and $\mathbb{N}_{0}$ to indicate the set of all non-negative integers. The game involves 
\begin{enumerate*}
\item two players who are henceforth referred to as $P_{1}$ and $P_{2}$, 
\item and a single token that is placed at $v^*$ just before the game begins.
\end{enumerate*}
At the start of the game, $P_{1}$ owns a capital worth $i$ units of money, whereas $P_{2}$ owns a capital worth $j$ units of money. They take turns to make \emph{moves}, with $P_{1}$ assumed to make the first move, where a \emph{move} (by either player) involves relocating the token from where it is currently situated, say a vertex $u \in V(t)$, to any child $v$ of $u$ (if such a child exists). Each move comes with a \emph{reward}: whichever player moves the token from a vertex $u$ to a child $v$ of $u$ is awarded an amount equal to the edge-weight $\omega_t(u,v)$. A player wins if
\begin{enumerate}
	\item either she is the first of the two to amass a total capital worth $\kappa$ units of money, 
	\item or her opponent is the first to have her capital dwindle to $0$,
	\item or she is able to move the token to a leaf vertex (from where her opponent is unable to move the token any farther).
\end{enumerate} 
The game continues for as long as neither of the two players emerges victorious, and this may happen indefinitely, leading to a draw. The \emph{duration} of the game either indicates the number of moves that occur until the game ends (with one of the two players winning and the other losing), or else it equals $\infty$ (which happens when the game results in a draw).

It is worthwhile to note here that, were $P_{2}$ to make the first move (or, in other words, play the first round of the game), the corresponding game can simply be referred to as $\mathcal{G}(j,i,\kappa,t)[v^*]$. It is immediate that if either of $i$ and $j$ were to equal $\kappa$ or $0$, the fate of the game is decided even before the game begins, i.e.\ if $i=\kappa$ (which implies, from \eqref{i,j_domain}, that $j \in \{0,1,\ldots,\kappa-1\}$), then $P_{1}$ wins right away, whereas if $i=0$ (which implies, from \eqref{i,j_domain}, that $j \in \{1,2,\ldots,\kappa\}$), then $P_{1}$ loses right away; likewise, if $j=\kappa$, $P_{2}$ wins right away, and if $j=0$, $P_{2}$ loses right away.

We assume that, if the game is destined to end in a finite number of rounds, the player who wins tries to wrap up the game as quickly as possible, whereas the player who loses tries to prolong its duration as much as possible.\footnote{It should be noted here that the rooted edge-weighted tree $t$ is revealed in its entirety to both the players before the game begins (this becomes particularly relevant when the game is played on a suitably defined \emph{random} rooted edge-weighted tree), which makes it a \emph{perfect information game}.} A formal game theoretic description of this game can be accomplished in a manner analogous to that for the \emph{site percolation game} studied (on an infinite lattice graph whose set of vertices is $\mathbb{Z}^{2}$) in \cite{bhasin2022class} (see \S 5 of \cite{bhasin2022class}).

\subsection{The bond percolation game on a random edge-weighted tree}\label{subsec:random_game}
We begin by recalling the well-known, extensively studied \emph{rooted Galton-Watson (GW) branching process} that is defined as follows (see \cite{watson1875probability} which introduced this branching process as a model for analyzing the extinction of family lines, and \cite{athreya2012branching} for a detailed exposition on the properties and important variants of this branching process). Given a root-vertex $\phi$ and a probability distribution $\chi$ that is supported on $\mathbb{N}_{0}$ (or a subset thereof), henceforth referred to as the \emph{offspring distribution}, let $\phi$ have $X_{0}$ children with $X_{0}$ distributed as $\chi$. If $X_{0}=0$, the process stops. Conditioned on $X_{0}=m$ for some $m \in \mathbb{N}$, we name these $m$ children $u_{1}, u_{2}, \ldots, u_{m}$, and we let $u_{i}$ have $X_{i}$ children, with $X_{1}, X_{2}, \ldots, X_{m}$ i.i.d.\ $\chi$. We continue thus, generating a rooted (random) tree that is indicated by $T_{\chi}$ (we drop the subscript $\chi$ whenever the offspring distribution $\chi$ is fixed \emph{a priori}). It is a well-known fact that $T_{\chi}$ exhibits a positive probability of being infinite if and only if the expectation of $\chi$ is strictly greater than $1$. 

As in \S\ref{subsec:deterministic_game}, we let $V(T)$ denote the set of vertices of $T$, and given $u, v \in V(T)$ with $u$ the parent of $v$, we indicate by $(u,v)$ the edge between $u$ and $v$, directed from $u$ towards $v$. We also let $G$ denote the probability generating function corresponding to $\chi$, i.e.\
\begin{equation}
G(x) = \sum_{m=0}^{\infty}x^{m}\chi(m), \text{ for } x \in [0,1],\label{pgf}
\end{equation}
where $\chi(m)$ is the probability of the event that $\phi$ has $m$ children, for each $m \in \mathbb{N}_{0}$.

It is important to mention here that we make the following (very weak) assumption regarding $\chi$: that $\chi(0) < 1$, which is equivalent to the assumption that there exists at least one $m \in \mathbb{N}$ with $\chi(m) > 0$ (the complement of this criterion, i.e.\ $\chi(0)=1$, results in a rooted tree with the root as the only vertex). Note that this assumption makes $G$ a strictly increasing function on $[0,1]$.

The definition of the weighted-tree-percolation game, as presented in \S\ref{subsec:deterministic_game}, readily extends to the premise of \emph{random} rooted edge-weighted trees. Here, we slightly abuse the notation introduced above and let $T$ (or $T_{\chi}$, when we need to emphasize on the offspring distribution $\chi$) indicate the rooted GW tree in which each directed edge $(u,v)$ has been assigned, \emph{independently}, a \emph{random} edge-weight $\omega_T(u,v)$ distributed as follows:
\begin{equation}
\omega_T(u,v) = i \text{ with probability } p_{i}, \text{ for } i \in \{-1,0,1\},\label{edge_weight_prob}
\end{equation}
where $p_{i} \in [0,1]$ for each $i \in \{-1,0,1\}$ and $p_{-1}+p_{0}+p_{1}=1$. The corresponding weighted-tree-percolation game played on $T$ is indicated by $\mathcal{G}(i,j,\kappa,T)[v^*]$ -- as before, $i$ and $j$ denote, respectively, the initial capitals that $P_{1}$ and $P_{2}$ begin playing with (and $P_{1}$ plays the first round), $\kappa$ is the amount that, if amassed by a player, results in her victory, and $v^{*}$ is the initial vertex (in $V(T)$) at which the token is placed just before the game begins. We emphasize here that the realization of $T$ (including all its edge-weights) is revealed in its entirety to both $P_{1}$ and $P_{2}$ before the game begins, and that the assumption stated at the very end of \S\ref{subsec:deterministic_game} is imposed in this case as well.

\section{Main results of this paper}\label{subsec:main_results}
The primary goal of this paper is to understand the probabilities of the various possible outcomes of the weighted-tree-percolation game described in \S\ref{subsec:random_game}. These outcomes are:
\begin{enumerate*}
\item a win for $P_{1}$ (assumed to play the first round of the game),
\item a loss for $P_{1}$ (equivalently, a win for $P_{2}$), and
\item a draw for both the players.
\end{enumerate*}
The randomness of the underlying premise on which the game is being played, which gives rise to the probabilities mentioned above, is \emph{two-fold}:
\begin{enumerate*}
\item because of the measure induced by the GW branching process (as described at the very beginning of \S\ref{subsec:random_game}),
\item and because of the measure induced by the i.i.d.\ edge-weights assigned to the edges of the tree generated by the GW branching process, according to \eqref{edge_weight_prob}.
\end{enumerate*}

Having fixed $\kappa$, the offspring distribution $\chi$ of $T=T_{\chi}$, and the edge-weight probabilities $p_{-1}$, $p_{0}$ and $p_{1}$ given by \eqref{edge_weight_prob}, let $w_{i,j}$, for each $i, j \in \{1,2,\ldots,\kappa-1\}$, indicate the probability of the event that $P_{1}$ wins the game $\mathcal{G}(i,j,\kappa,T)[\phi]$, where $\phi$ is the root of $T$. Likewise, let $\ell_{i,j}$ denote the probability that $P_{1}$ loses $\mathcal{G}(i,j,\kappa,T)[\phi]$, and let $d_{i,j}$ denote the probability that $\mathcal{G}(i,j,\kappa,T)[\phi]$ culminates in a draw, so that $d_{i,j} = 1-\ell_{i,j}-w_{i,j}$. Although it is difficult to express $w_{i,j}$ and $\ell_{i,j}$ explicitly as functions of $i$, $j$, $\kappa$, $\chi$, $p_{-1}$, $p_{0}$ and $p_{1}$, it is possible to express them, loosely speaking, as specific fixed points of suitably defined functions. This is precisely what has been accomplished in Theorem~\ref{thm:1} below.

Given a matrix $X$, we let $X_{\alpha,\beta}$ indicate the entry in its $\alpha$-th row and $\beta$-th column, and by $X^{T}$ we mean the transpose of $X$. Let $L$ and $W$ be two $(\kappa-1)\times(\kappa-1)$ square matrices with $L_{i,j}=\ell_{i,j}$ and $W_{i,j}=w_{i,j}$, for each $i, j \in \{1,2,\ldots,\kappa-1\}$. Let $P$ be a $(\kappa-1) \times (\kappa-1)$ matrix with
\[
 P_{i,j} = 
  \begin{cases} 
   p_{-1} & \text{if } j=i-1, \\
   p_{0} & \text{if } j=i,\\
   p_{1} & \text{if } j=i+1,\\
   0 & \text{otherwise,}
  \end{cases}
\]
for all $i,j \in \{1,2,\ldots,\kappa-1\}$. In particular, $P_{1,1}=p_{0}$, $P_{1,2}=p_{1}$ and $P_{1,j}=0$ for all $j \in \{3,\ldots,\kappa-1\}$, whereas $P_{\kappa-1,\kappa-2}=p_{-1}$, $P_{\kappa-1,\kappa-1}=p_{0}$ and $P_{\kappa-1,j}=0$ for all $j \in \{1,\ldots,\kappa-3\}$. Let $\mathbf{e}_{1}$ denote the $(\kappa-1) \times 1$ column vector in which the element in the $1$st row equals $1$ and every other row equals $0$, let $\mathbf{1}$ denote the $(\kappa-1) \times 1$ column vector in which each entry equals $1$, and let $J = \mathbf{1}\mathbf{1}^{T}$ denote the $(\kappa-1)\times(\kappa-1)$ matrix in which every entry equals $1$. Given any two $m \times n$ matrices $A$ and $B$, where $m, n \in \mathbb{N}$, we say that $A \preceq B$ if $A_{i,j} \leqslant B_{i,j}$ for all $i \in \{1,2,\ldots,m\}$ and all $j \in \{1,2,\ldots,n\}$. Finally, let $f$ be the function, defined on the space $\mathcal{S}$ of all $(\kappa-1)\times(\kappa-1)$ matrices whose entries all belong to the set $[0,1]$, such that for any $A \in \mathcal{S}$,
\begin{equation}
f(A) = B \text{ where } B_{i,j} = G(A_{i,j}), \text{ for all } i,j \in \{1,2,\ldots,\kappa-1\}\nonumber
\end{equation}
with $G$ as defined in \eqref{pgf}. Armed with the above notations and definitions, we are now ready to state Theorem~\ref{thm:1}:
\begin{theorem}\label{thm:1}
Each of the matrices $L$ and $(J - W)$ is a fixed point of the matrix equation $Y = h(X)$, where the function $h$, defined on $\mathcal{S}$, is given by
\begin{equation}
h(X) = f\left[p_{-1}\mathbf{e}_{1}\mathbf{1}^{T} + P\left\{J - f\left(p_{-1}\mathbf{1}\mathbf{e}_{1}^{T}+(J-X)P^{T}\right)\right\}\right].\label{fixed_point_eq}
\end{equation}
Moreover, if $\hat{X}$ is any other fixed point, belonging to the set $\mathcal{S}$, of the function $h$, then
\begin{equation}
L \preceq \hat{X} \preceq J-W.\label{smallest_largest_fixed_point}
\end{equation}
Consequently, $d_{i,j}=0$ for each $i,j \in \{1,2,\ldots,\kappa-1\}$ if and only if the function $h$ has a unique fixed point in $\mathcal{S}$. If we define $g:\mathcal{S} \rightarrow \mathcal{S}$ as 
\begin{equation}
g(X) = f\left[p_{-1}\mathbf{e}_{1}\mathbf{1}^{T}+P\left(J-X^{T}\right)\right],\label{g_composition}
\end{equation}
then $g$ has a unique fixed point in $\mathcal{S}$ whenever $f$ has a unique fixed point in $\mathcal{S}$, and this fixed point of $g$ equals each of $L$ and $J-W$.
\end{theorem}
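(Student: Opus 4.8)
First I would condition on $P_{1}$'s opening move. If $P_{1}$ slides the token along an edge of weight $\omega$ to a child $v$ of $\phi$, her capital becomes $i+\omega$; this is an immediate win if $i+\omega=\kappa$, an immediate loss if $i+\omega=0$, and otherwise the position becomes the subgame $\mathcal{G}(j,i+\omega,\kappa,T_{v})[v]$ with the roles swapped. Since the subtrees hanging from the children of $\phi$ are i.i.d.\ copies of $T$ with i.i.d.\ edge-weights, a single child is a ``winning'', ``losing'', or ``drawing'' move for $P_{1}$ with probabilities expressible through the entries of $L$ and $W$ evaluated with indices swapped, under the boundary conventions $w_{\cdot,0}=1,\ w_{\cdot,\kappa}=0,\ \ell_{\cdot,0}=0,\ \ell_{\cdot,\kappa}=1$. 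Using the win/lose/draw dichotomy ($P_{1}$ wins iff some child is a winning move, loses iff every child is a losing move, draws otherwise) and summing over the offspring count with the pgf $G$, I would arrive at
\begin{equation}
L=f\!\left(PW^{T}+p_{-1}\mathbf{e}_{1}\mathbf{1}^{T}\right),\qquad J-W=f\!\left(J-PL^{T}-p_{1}\mathbf{e}_{\kappa-1}\mathbf{1}^{T}\right),\nonumber
\end{equation}
where $\mathbf{e}_{\kappa-1}$ is the $(\kappa-1)$-th standard basis column vector, and $p_{-1}\mathbf{e}_{1}\mathbf{1}^{T}$, $p_{1}\mathbf{e}_{\kappa-1}\mathbf{1}^{T}$ are the boundary corrections coming from the two extreme rows of $P$, whose row-sums fall short of $1$. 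Substituting the second relation into the first and using the transpose together with the row-sum identity $J-JP^{T}=p_{-1}\mathbf{1}\mathbf{e}_{1}^{T}+p_{1}\mathbf{1}\mathbf{e}_{\kappa-1}^{T}$ collapses the two-step composition into precisely $h$, giving $h(L)=L$; the dual computation yields $h(J-W)=J-W$.

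\textbf{Extremality of the fixed points.} Next I would check that $h$ is monotone on the complete lattice $(\mathcal{S},\preceq)$: increasing $X$ decreases $(J-X)P^{T}$, hence decreases the inner $f(\cdots)$, hence increases $J-f(\cdots)$ and the whole bracket, so the two order-reversals cancel and $h(X)$ increases. The crux is then to identify the iterates of $h$ probabilistically. Truncating the game at finitely many plies and writing $\ell^{(n)}_{i,j}$ (resp.\ $w^{(n)}_{i,j}$) for the probability that $P_{1}$ loses (resp.\ wins) within $n$ moves, the same conditioning produces coupled one-level recursions whose two-fold composition is $h$, so that $\ell^{(2n)}=h^{n}(\mathbf{0})$ and $J-w^{(2n)}=h^{n}(J)$. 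Because a player destined to lose or to win does so in finitely many moves, $\ell^{(n)}\uparrow\ell$ and $w^{(n)}\uparrow w$ by monotone convergence, whence $h^{n}(\mathbf{0})\uparrow L$ and $h^{n}(J)\downarrow J-W$. This is the step I expect to be the main obstacle: pinning down the truncated recurrence and its boundary terms exactly, and justifying the monotone limits. Granting it, $L$ and $J-W$ are the least and greatest fixed points; for an arbitrary fixed point $\hat{X}\in\mathcal{S}$ one has $\mathbf{0}\preceq\hat{X}\preceq J$, and applying the monotone $h^{n}$ and letting $n\to\infty$ yields \eqref{smallest_largest_fixed_point}.

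\textbf{Uniqueness and the operator $g$.} Since $d_{i,j}=(J-W-L)_{i,j}$, the condition that $d_{i,j}=0$ for all $i,j$ is equivalent to $L=J-W$, i.e.\ to the coincidence of the least and greatest fixed points, which by \eqref{smallest_largest_fixed_point} happens exactly when $h$ has a unique fixed point in $\mathcal{S}$. For the final claim I would record the structural identity $h=g\circ g$: a direct transpose computation gives $g(X)^{T}=f\!\left(p_{-1}\mathbf{1}\mathbf{e}_{1}^{T}+(J-X)P^{T}\right)$, and substituting this into \eqref{fixed_point_eq} shows $h(X)=g(g(X))$ for every $X\in\mathcal{S}$. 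In the unique-fixed-point case $d\equiv 0$ forces $W=J-L$, so $L=f\!\left(p_{-1}\mathbf{e}_{1}\mathbf{1}^{T}+P(J-L^{T})\right)=g(L)$, exhibiting $L$ as a fixed point of $g$. Finally, every fixed point of $g$ is a fixed point of $g\circ g=h$, and $h$ has the single fixed point $L$; hence $L$ is the unique fixed point of $g$ as well, completing the proof.
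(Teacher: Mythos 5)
Your proposal is correct and follows essentially the same route as the paper: the same matrix recurrences (your form $J-W=f\bigl(J-PL^{T}-p_{1}\mathbf{e}_{\kappa-1}\mathbf{1}^{T}\bigr)$ is equivalent to the paper's $J-W=f\bigl(p_{-1}\mathbf{e}_{1}\mathbf{1}^{T}+P(J-L)^{T}\bigr)$ via the row-sum identity you cite), the same monotonicity of $h$, the same identification $\ell^{(2n)}=h^{n}(\mathbf{0})$ and $J-w^{(2n)}=h^{n}(J)$ resting on the fact that a destined winner wins in finitely many rounds, and the same sandwiching of an arbitrary fixed point. The only (minor, and arguably cleaner) departure is at the very end: you exhibit $L$ as a fixed point of $g$ directly from $W=J-L$ and the one-step recurrence, whereas the paper invokes Brouwer's theorem to get existence of a fixed point of $g$ before appealing to $h=g\circ g$ for uniqueness.
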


While Theorem~\ref{thm:1} concerns itself with the probabilities of \emph{all} three possible outcomes, Theorem~\ref{thm:2} is all about the probabilities of draw, $d_{i,j}$, where $i, j \in \{1,2,\ldots,\kappa-1\}$:
\begin{theorem}\label{thm:2}
As long as the pgf $G$ (defined in \eqref{pgf}) corresponding to the offspring distribution $\chi$ is strictly increasing, the following are true: 
\begin{enumerate}
\item \label{draw_1} When each of $p_{1}$, $p_{0}$ and $p_{-1}$ is strictly positive, either $d_{i,j}=0$ for \emph{all} $i,j \in \{1,2,\ldots,\kappa-1\}$, or else $d_{i,j} > 0$ for \emph{all} $i,j \in \{1,2,\ldots,\kappa-1\}$. 
\item \label{draw_2} When $p_{-1}=0$ while each of $p_{1}$ and $p_{0}=(1-p_{1})$ is each strictly positive, we have $d_{i,j}=0$ if and only if $d_{j,i}=0$, for each $i, j \in \{1,2,\ldots,\kappa-1\}$. Whenever each of $i$, $j$, $i+1$ and $j+1$ belongs to $\{1,2,\ldots,\kappa-1\}$, we have $d_{i,j}=0$ only if $d_{i+1,j}=d_{i,j+1}=0$. When $i=\kappa-1$ while $j \in \{1,2,\ldots,\kappa-2\}$, we have $d_{\kappa-1,j}=0$ only if $d_{\kappa-1,j+1}=0$, and when $j=\kappa-1$ while $i \in \{1,2,\ldots,\kappa-2\}$, we have $d_{i,\kappa-1}=0$ only if $d_{i+1,\kappa-1}=0$. 
\item \label{draw_3} Likewise, when $p_{1}=0$ while each of $p_{-1}$ and $p_{0}=(1-p_{-1})$ is strictly positive, we have $d_{i,j}=0$ if and only if $d_{j,i}=0$, for each $i, j \in \{1,2,\ldots,\kappa-1\}$. Whenever each of $i$, $j$, $i-1$ and $j-1$ belongs to $\{1,2,\ldots,\kappa-1\}$, we have $d_{i,j}=0$ only if $d_{i-1,j}=d_{i,j-1}=0$. When $i=1$ while $j \in \{2,\ldots,\kappa-1\}$, we have $d_{1,j}=0$ only if $d_{1,j-1}=0$, and when $j=1$ while $i \in \{2,\ldots,\kappa-1\}$, we have $d_{i,1}=0$ only if $d_{i-1,1}=0$.
\item \label{draw_4} When $p_{0}=0$ while each of $p_{1}$ and $p_{-1}=(1-p_{1})$ is strictly positive, we have $d_{i_{1},j_{1}}=0$ if and only if $d_{i_{2},j_{2}}=0$ whenever each of $i_{1}$, $i_{2}$, $j_{1}$, $j_{2}$ belongs to $\{1,2,\ldots,\kappa-1\}$ and $(i_{1}-i_{2})+(j_{1}-j_{2})$ is even.
\end{enumerate}
\end{theorem}
An interesting conclusion that can be drawn from \eqref{draw_1} of Theorem~\ref{thm:2}, for fixed values of $p_{-1}$, $p_{0}$ and $p_{1}$ in $(0,1)$, and offspring distribution $\chi$ of $T_{\chi}$ that has its underlying parameter (or parameter-tuple) $\theta$ coming from the parameter space $\Theta$, is as follows: if for some $i, j \in \{1,2,\ldots,\kappa-1\}$, there exists a subset $S \subset \Theta$ such that $d_{i,j}=0$ if and only if $\theta \in S$, then for \emph{all} $i', j' \in \{1,2,\ldots,\kappa-1\}$, we have $d_{i',j'}=0$ if and only if $\theta \in S$. In other words, the same \emph{phase transition phenomenon} is exhibited by $d_{i,j}$ for all $i,j \in \{1,2,\ldots,\kappa-1\}$ as $\theta$ is allowed to vary over all of $\Theta$. 

Our next main result concerns itself with the average duration of the game $\mathcal{G}(i,j,\kappa,\phi)[T]$ for each $(i,j) \in \{1,2,\ldots,\kappa-1\}^{2}$. An immediate necessary condition for the expected duration of $\mathcal{G}(i,j,\kappa,\phi)[T]$ to be finite is, of course, that $d_{i,j}=0$ (since $d_{i,j} > 0$ implies that the duration of $\mathcal{G}(i,j,\kappa,\phi)[T]$ is $\infty$ with a strictly positive probability).
\begin{theorem}\label{thm:3}
Assume that each of $p_{-1}$, $p_{0}$ and $p_{1}$ is strictly positive. Let $\mathcal{T}_{i,j}$ denote the (random) duration of the game $\mathcal{G}(i,j,\kappa,T)[\phi]$, where we recall that $T$ is the rooted, edge-weight GW tree with root $\phi$. For each $i,j \in \{1,2,\ldots,\kappa-1\}$, we set
\begin{align}
{}&\alpha_{i,j} = p_{-1}w_{j,i-1}+p_{0}w_{j,i}+p_{1}w_{j,i+1},\label{alpha}\\
{}&\beta_{i,j} = p_{-1}\left(1-\ell_{j,i-1}\right)+p_{0}\left(1-\ell_{j,i}\right)+p_{1}\left(1-\ell_{j,i+1}\right).\label{beta}
\end{align}
For each $s,t \in \{1,2,\ldots,\kappa-1\}$, we define 
\begin{equation}\label{intermediate_11}
 C_{(s,t)}^{(i,j)} = 
  \begin{cases} 
   G'(\beta_{s,t})G'(\alpha_{t,i})p_{i-s}p_{j-t} & \text{if } s \in \{i-1,i,i+1\} \text{ and } t \in \{j-1,j,j+1\}, \\
   0 & \text{otherwise. }
  \end{cases}
\end{equation}
If $d_{i',j'}=0$ for \emph{all} $i',j' \in \{1,2,\ldots,\kappa-1\}$ (which yields $\alpha_{i,j}=\beta_{i,j}$ for all $i, j \in \{1,2,\ldots,\kappa-1\}$), and
\begin{equation}
\sum_{s,t \in \{1,2,\ldots,\kappa-1\}}C_{(s,t)}^{(i',j')} < 1 \text{ for each } i',j' \in \{1,2,\ldots,\kappa-1\},\label{thm:3:eq}
\end{equation}
then the expected duration of the game, $\E[\mathcal{T}_{i,j}]$, is finite for every $i,j \in \{1,2,\ldots,\kappa-1\}$.
\end{theorem}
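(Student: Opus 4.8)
The plan is to fix the regime in which $d_{i,j}=0$ for all $i,j$, so that (since each of $p_{-1},p_{0},p_{1}$ is strictly positive, and by the no-draw-child computation underlying Theorem~\ref{thm:1}) every child of a vertex is, almost surely, either a winning move or a losing move for the player about to move, and $\mathcal{T}_{i,j}<\infty$ almost surely. The goal is then to upgrade almost-sure finiteness to finiteness in expectation. I would condition on $P_{1}$'s first move and exploit the role-swap: a move by $P_{1}$ from $\phi$ along an edge of weight $\omega$ to a child $v$ turns the remaining game into $\mathcal{G}(j,i+\omega,\kappa,T_{v})[v]$, in which $P_{2}$ is the first mover. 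Accordingly I would introduce the partial expected durations $\E[\mathcal{T}_{i,j}\,\mathbf{1}\{P_{1}\text{ wins}\}]$ and $\E[\mathcal{T}_{i,j}\,\mathbf{1}\{P_{1}\text{ loses}\}]$, whose sum is $\E[\mathcal{T}_{i,j}]$, and write one-step recursions for each.

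Two bounds drive the recursion. In the losing case the loser maximizes, so the duration equals $1+\max_{k}\tau_{k}$ over the children, and the loss event forces \emph{every} child to be a winning move for $P_{2}$. Bounding $\max\leqslant\sum$ and taking expectations, this ``all children of one type'' constraint, together with independence across children and the identity $\sum_{m}m\,\chi(m)x^{m-1}=G'(x)$, produces a factor $G'$ evaluated at the relevant per-child probability ($\alpha$, resp.\ $\beta$), multiplied by the swapped-configuration winning durations. In the winning case the winner minimizes; here I would bound the relevant minimum so as \emph{not} to incur the full offspring mean $G'(1)$, keeping the resulting coefficient at $G'(\beta_{s,t})$. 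Iterating the ``$P_{1}$-to-move'' recursion over two successive moves (two generations are needed to restore the parity of whose turn it is) yields a closed componentwise system $v\preceq c+Cv$, where $v$ collects the partial expected durations indexed by $(i,j)$, the nonnegative vector $c$ gathers the finite lower-order terms built from $\ell_{i,j}$ and $w_{i,j}$, and $C$ is the matrix with entries $C^{(i,j)}_{(s,t)}$ as in \eqref{intermediate_11} --- the two factors $G'(\beta_{s,t})$ and $G'(\alpha_{t,i})$ and the two probabilities $p_{i-s},p_{j-t}$ arising from the two successive moves.

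Inverting the system is then routine. Hypothesis \eqref{thm:3:eq} asserts exactly that every row sum of the nonnegative matrix $C$ is strictly less than $1$, i.e.\ $\|C\|_{\infty}<1$, so its spectral radius is $<1$, $I-C$ is invertible, and $(I-C)^{-1}=\sum_{n\geqslant 0}C^{n}$ is entrywise nonnegative and finite. Iterating $v\preceq c+Cv$ gives $v\preceq(I-C)^{-1}c<\infty$ componentwise, whence $\E[\mathcal{T}_{i,j}]<\infty$ for every $i,j$. To make the manipulation of expectations that are a priori possibly infinite rigorous, I would first run the entire argument for the truncated durations $\mathcal{T}_{i,j}\wedge n$ (whose expectations are finite), obtain $v^{(n)}\preceq c+Cv^{(n)}$ and hence the uniform bound $v^{(n)}\preceq(I-C)^{-1}c$, and finally let $n\to\infty$ by monotone convergence.

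The main obstacle is carrying out the two-generation accounting so that the adversarial behaviour of the maximizing (losing) player is controlled and the coefficients emerge exactly as $G'(\beta_{s,t})G'(\alpha_{t,i})p_{i-s}p_{j-t}$, including the correct index shifts forced by the alternating roles. The losing player can drive the token arbitrarily deep by stalling, so a priori the duration is governed by a branching-type exploration of the tree; the content of \eqref{thm:3:eq} is precisely that the associated two-step branching is subcritical ($\|C\|_{\infty}<1$), which is what prevents the expected duration from blowing up. The delicate step is the bookkeeping that separates the $G'(\beta)$ factor (the winner's level) from the $G'(\alpha)$ factor (the loser's level) while summing over children; the linear-algebra conclusion in the preceding paragraph is then immediate.
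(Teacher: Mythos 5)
Your overall architecture is close to the paper's at the level of the final linear system: the partial expected durations you introduce are exactly the quantities the paper works with, since $\sum_{n\geqslant 1}\left(w_{i,j}-w_{i,j}^{(n)}\right)=\E\left[\mathcal{T}_{i,j}\mathbf{1}\{P_{1}\text{ wins}\}\right]$ and $\sum_{n\geqslant 1}\left(\ell_{i,j}-\ell_{i,j}^{(n)}\right)=\E\left[\mathcal{T}_{i,j}\mathbf{1}\{P_{1}\text{ loses}\}\right]$, and your closing step (row sums of $C$ below $1$ give $\|C\|_{\infty}<1$, hence $(I-C)^{-1}=\sum_{n\geqslant 0}C^{n}$ is finite and nonnegative) is, if anything, cleaner than the paper's detour through irreducibility and the Perron--Frobenius theorem. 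Your losing branch is also fine: bounding the loser's maximum by a sum over the children and using exchangeability produces exactly $\sum_{m}m\,\chi(m)\,\alpha_{i,j}^{m-1}=G'(\alpha_{i,j})$ as the coefficient.

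The gap is in the winning branch, and it is not just bookkeeping. There the winner minimizes over the random, nonempty set of children that are losing moves for the opponent, each child independently being such a move with probability $1-\beta_{i,j}$. Any concrete selection rule (say, the first such child, or a size-biased average over all such children) gives, after conditioning on $m$ children and summing against $\chi(m)$, the coefficient
\begin{equation}
\sum_{m}\chi(m)\,\frac{1-\beta_{i,j}^{m}}{1-\beta_{i,j}}\;=\;\frac{1-G(\beta_{i,j})}{1-\beta_{i,j}},\nonumber
\end{equation}
which is a secant slope of $G$ and, by convexity of the pgf, is $\geqslant G'(\beta_{i,j})$ with equality only when $G$ is affine. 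So the matrix your one-step expectation recursion actually yields dominates $C$ entrywise, and hypothesis \eqref{thm:3:eq} does not guarantee that \emph{its} row sums are below $1$; as sketched, your argument proves the theorem only under a strictly stronger hypothesis. This is precisely why the paper does not run the recursion on the expectations but on the tails: by \eqref{recurrence_2} and \eqref{recurrence_4} one has $w_{i,j}-w_{i,j}^{(n+1)}=G\bigl(\beta_{i,j}^{(n)}\bigr)-G\bigl(\beta_{i,j}\bigr)$, the mean value theorem places the derivative at a point between $\beta_{i,j}$ and $\beta_{i,j}^{(n)}$, and since $\beta_{i,j}^{(n)}\downarrow\beta_{i,j}$ this coefficient is at most $(1+\epsilon)G'(\beta_{i,j})$ for all $n\geqslant N_{i,j}$; the finitely many earlier terms contribute only a finite additive constant, and the $(1+\epsilon)$ is absorbed because the spectral radius is strictly less than $1$. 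To repair your proof you must either strengthen \eqref{thm:3:eq} to the secant-slope version or replace the expected durations by the tail probabilities $\Prob\left[P_{1}\text{ wins},\,\mathcal{T}_{i,j}\geqslant n\right]=w_{i,j}-w_{i,j}^{(n)}$ --- at which point you have reconstructed the paper's argument.
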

\begin{remark}\label{rem:thm_3}
Note, from Theorem~\ref{thm:2}, that when $p_{-1}$, $p_{0}$ and $p_{1}$ are all strictly positive, it suffices to consider $d_{i,j}=0$ for \emph{any one} pair $(i,j) \in \{1,2,\ldots,\kappa-1\}^{2}$ in the statement of Theorem~\ref{thm:3}.

It is crucial to note here that, even when our goal is to investigate the finiteness (or lack thereof) of the expected duration of $\mathcal{G}(i,j,\kappa,T)[\phi]$ for a \emph{specific} pair $(i,j)$, with $i,j \in \{1,2,\ldots,\kappa-1\}$ (defined in \eqref{i,j_domain}), we need the inequality in \eqref{thm:3:eq} to hold for \emph{all} pairs $(i',j')$ with $i',j' \in \{1,2,\ldots,\kappa-1\}$. 
\end{remark}

As evident from the statement of Theorem~\ref{thm:1}, for values of $\kappa$ higher than $2$, we are faced with the task of finding (or at the very least, drawing conclusions about the uniqueness, or lack thereof, of) all fixed points, belonging to $\mathcal{S}$, of the multivariable function $h$ defined in \eqref{fixed_point_eq}. This is usually not tractable using rigorous analytical methods, and one needs to resort to numerical methods for which seeking the aid of a computer programme seems the wisest course of action. However, for $\kappa=2$, for specific offspring distributions (that are fairly commonly considered for GW branching processes), we are able to obtain precise phase transition results (see Theorem~\ref{thm:kappa=2}), whereas for $\kappa=3$, for specific regimes of values of $(p_{-1},p_{0},p_{1})$ and for specific underlying offspring distributions, we are either able to provide sufficient conditions for $d_{i,j}$ to be $0$ for every $i,j \in \{1,2\}$, or sufficient conditions for $d_{i,j}$ to be strictly positive for each $i,j \in \{1,2\}$ (see Theorem~\ref{thm:kappa=3}).
\begin{theorem}\label{thm:kappa=2}
\sloppy Let $\kappa=2$, and recall that $\chi$ is the offspring distribution of the underlying rooted GW tree.
\begin{enumerate}
\item when $\chi$ is Binomial$(d,\pi)$ distribution, for any $0 < \pi \leqslant 1$ and any $d \in \mathbb{N}$ with $d \geqslant 2$, the probability of draw, $d_{1,1}$, is $0$ if and only if $\left(1-p_{-1}-p_{1}\right) \pi (1-\pi p_{1})^{d-1} \leqslant (d+1)^{d-1}d^{-d}$;
\item when $\chi$ is Poisson$(\lambda)$, for any $\lambda > 0$, the probability of draw, $d_{1,1}$, is $0$ if and only if $\left(1-p_{-1}-p_{1}\right) \lambda e^{-\lambda p_{1}} \leqslant e$;
\item when $\chi$ is Negative Binomial$(r,\pi)$, for $r \in \mathbb{N}$ and $0 < \pi \leqslant 1$, the probability of draw, $d_{1,1}$, is $0$ if and only if $(r-1)^{r+1}(1-\pi)\left(1-p_{-1}-p_{1}\right)\pi^{r} \leqslant (p_{1}+\pi-p_{1}\pi)^{r+1}r^{r}$;
\item when $\chi(0) = 1-\pi$ and $\chi(d) = \pi$ for some $0 < \pi < 1$ and some $d \in \mathbb{N}$ with $d \geqslant 2$, the probability of draw, $d_{1,1}$, is $0$ if and only if $\left(1-p_{-1}-p_{1}\right) \pi \left\{\pi(1-p_{1}) + p_{-1}(1-\pi)\right\}^{d-1} \leqslant \frac{(d+1)^{d-1}}{d^{d}}$.
\end{enumerate}
\end{theorem}
Suppose we consider the special case of $p_{-1}=0$ when $\kappa=3$. From Theorem~\ref{thm:2}, we know that $d_{1,1}=0$ implies that $d_{1,2}=d_{2,1}=0$, and each of these, in turn, implies that $d_{2,2}=0$. This, in turn, tells us that when $d_{2,2}>0$, we have each of $d_{1,1}$, $d_{1,2}$ and $d_{2,1}$ strictly positive. Since $p_{-1}=0$, the only way for a player to win the game $\mathcal{G}(2,2,3,T)[\phi]$ (i.e.\ with $i=2$, $j=2$ and $\kappa=3$) is to move the token along an edge with edge-weight $+1$, which means that this is equivalent to the game $\mathcal{G}(1,1,2,T)[\phi]$ (i.e.\ $i=j=1$ and $\kappa=2$) when $p_{-1}=0$. From Theorem~\ref{thm:kappa=2}, we then conclude that:
\begin{enumerate}
\item when $\chi$ is Binomial$(d,\pi)$ distribution, for any $0 < \pi \leqslant 1$ and any $d \in \mathbb{N}$ with $d \geqslant 2$, and $p_{-1}=0$, each $d_{i,j}$, for $i, j \in \{1,2\}$, is strictly positive whenever $\left(1-p_{1}\right) \pi (1-\pi p_{1})^{d-1} > (d+1)^{d-1}d^{-d}$;
\item when $\chi$ is Poisson$(\lambda)$, for any $\lambda > 0$, and $p_{-1}=0$, each $d_{i,j}$, for $i,j \in \{1,2\}$, is strictly positive whenever $\left(1-p_{1}\right) \lambda e^{-\lambda p_{1}} > e$;
\item when $\chi$ is Negative Binomial$(r,\pi)$, for $r \in \mathbb{N}$ and $0 < \pi \leqslant 1$, each $d_{i,j}$, for $i,j \in \{1,2\}$, is strictly positive whenever $(r-1)^{r+1}(1-\pi)\left(1-p_{1}\right)\pi^{r} > (p_{1}+\pi-p_{1}\pi)^{r+1}r^{r}$;
\item when $\chi(0) = 1-\pi$ and $\chi(d) = \pi$ for some $0 < \pi < 1$ and some $d \in \mathbb{N}$ with $d \geqslant 2$, each $d_{i,j}$, for $i,j \in \{1,2\}$, is strictly positive whenever $\left(1-p_{1}\right) \pi^{d}(1-p_{1})^{d-1} > \frac{(d+1)^{d-1}}{d^{d}}$.
\end{enumerate}
Similar conclusions can be drawn when we consider $p_{1}=0$ and $\kappa=3$.

In order to state Theorem~\ref{thm:kappa=3}, we introduce some notations: for $r \in \mathbb{N}$ with $r \geqslant 2$ and for any function $f: [0,1]^{r} \rightarrow [0,1]$ whose partial derivatives exist, we set $\partial_{i}f(\mathbf{x})=\frac{\partial}{\partial x_{i}}f(\mathbf{x})$ for each $i \in \{1,2,\ldots,r\}$, where $\mathbf{x}=(x_{1},x_{2},\ldots, x_{r})$.  
\begin{theorem}\label{thm:kappa=3}
Let $\kappa=3$. Recall, from \eqref{i,j_domain}, that we are now concerned with $d_{i,j}$ for $i, j \in \{1,2\}$. Let $A_{1,1}=A_{1,2}=G\left(p_{-1}\right)$, let $B_{2,1}=B_{2,2}=G\left(1-p_{1}\right)$, and let 
\begin{align}
A_{2,1}={}&G\left[p_{-1}\left\{1-G\left[1-\left(1-p_{-1}\right)G\left(p_{-1}\right)\right]\right\}\right],\nonumber\\
A_{2,2}={}&G\left[p_{-1}\left\{1-G\left\{1-G\left(p_{-1}\right)\right\}\right\}\right]+G\left[\left(1-p_{1}\right)\left\{1-G\left(1-p_{1}\right)\right\}\right]\nonumber\\& \hspace{9.7 
mm}-G\left[p_{-1}\left[1-G\left(1-p_{1}\right)-G\left[1-G\left(p_{-1}\right)\right]+G\left[\left(1-p_{1}\right)\left\{1-G\left(p_{-1}\right)\right\}\right]\right]\right],\nonumber\\
B_{1,1}={}&G\left[1-\left(1-p_{-1}\right)G\left(p_{-1}\right)\right]-G\left[\left(1-p_{-1}\right)\left\{1-G\left(p_{-1}\right)\right\}+p_{-1}\right]+G[\left(1-p_{-1}\right)\left\{1-G\left(p_{-1}\right)\right\}\nonumber\\&+p_{-1}-p_{1}G\left\{1-G\left(1-p_{1}\right)\right\}+p_{1}G\left[p_{-1}\left\{1-G\left(1-p_{1}\right)\right\}\right]],\nonumber\\
B_{1,2}={}&G\left[1-p_{1}G\left[\left(1-p_{1}\right)\left\{1-G\left(1-p_{1}\right)\right\}\right]\right].\nonumber
\end{align}
Define the functions $f_{1}: [0,1]^{2} \rightarrow [0,1]$ and $f_{2}: [0,1]^{2} \rightarrow [0,1]$ as $f_{1}(x_{1},x_{2})=G\left(1-p_{1}x_{2}-p_{0}x_{1}\right)$ and $f_{2}(x_{1},x_{2})=G\left(p_{0}+p_{-1}-p_{0}x_{2}-p_{-1}x_{1}\right)$. For each $(i,j) \in \{1,2\}^{2}$, we set
\begin{align}
E_{i,j}={}&\Big|p_{i-1}\partial_{j}f_{1}(A_{i,1},A_{i,2})G'\left\{1-p_{1}f_{1}\left(B_{2,1},B_{2,2}\right)-p_{0}f_{1}\left(B_{1,1},B_{1,2}\right)\right\}\nonumber\\&+p_{i-1}\partial_{j}f_{2}(A_{i,1},A_{i,2})G'\left\{1-p_{1}f_{2}\left(B_{2,1},B_{2,2}\right)-p_{0}f_{2}\left(B_{1,1},B_{1,2}\right)\right\}\nonumber\\&+p_{i-2}\partial_{j}f_{1}(A_{i,1},A_{i,2})G'\left\{p_{0}+p_{-1}-p_{0}f_{1}\left(B_{2,1},B_{2,2}\right)-p_{-1}f_{1}\left(B_{1,1},B_{1,2}\right)\right\}\nonumber\\&+p_{i-2}\partial_{j}f_{2}(A_{i,1},A_{i,2})G'\left\{p_{0}+p_{-1}-p_{0}f_{2}\left(B_{2,1},B_{2,2}\right)-p_{-1}f_{2}\left(B_{1,1},B_{1,2}\right)\right\}\Big|.\label{E_{i,j}_defn}
\end{align}
As long as $E_{i,j} < 1$ for each $(i,j) \in \{1,2\}^{2}$, the function $h$, defined in \eqref{fixed_point_eq}, has a unique fixed point in $\mathcal{S}$, implying that $d_{i,j}=0$ for each $(i,j) \in \{1,2\}^{2}$. 
\end{theorem}

Improved results (in the sense that the sufficient conditions obtained to guarantee that the probability of draw is $0$ are easier and more concise than what we achieve in Theorem~\ref{thm:kappa=3}) can be deduced when we consider certain special cases under $\kappa=3$, as stated in Theorem~\ref{thm:kappa=3_special}. Here, for any function $a: [0,1] \rightarrow [0,1]$, we let $a^{(n)}$ denote, for any $n \in \mathbb{N}$, the $n$-fold composition of $a$ with itself, and for any two functions $a, b :[0,1] \rightarrow [0,1]$, we let $a \circ b$ denote the composition of these two functions, i.e.\ $a \circ b(x)=a(b(x))$ for each $x \in [0,1]$.

\begin{theorem}\label{thm:kappa=3_special}
As mentioned, we consider $\kappa=3$ once again.
\begin{enumerate}
\item \label{kappa=3_part_1} When $\chi$ is the Dirac measure at $2$, i.e.\ $T_{\chi}$ is the rooted $2$-regular tree (in which each vertex has precisely $2$ children), and the ratio of $p_{-1}$ to $p_{0}$ to $p_{1}$ is of the form $p_{-1} : p_{0} : p_{1} = 1 : \alpha : \alpha^{2}$, i.e.\ we have
\begin{equation}
p_{1}=\frac{\alpha^{2}}{\alpha^{2}+\alpha+1},\quad p_{0}=\frac{\alpha}{\alpha^{2}+\alpha+1} \quad \text{and} \quad p_{-1}=\frac{1}{\alpha^{2}+\alpha+1},\label{special_form}
\end{equation}
the probability $d_{i,j}$ of draw is $0$ for each $i, j \in \{1,2\}$ whenever $\alpha \in [0,0.242915)\cup(2.57162,\infty)$. 
\item \label{kappa=3_part_2} When $p_{0}=0$, so that $p_{1}=(1-p_{-1})$, we have $d_{1,1}=0$, which is equivalent to $d_{2,2}=0$, if and only if the function $b^{(2)} \circ a^{(2)}$ has a unique fixed point in $[0,1]$, where $a, b: [0,1] \rightarrow [0,1]$ are defined as
\begin{equation}
a(x)=G\left(p_{-1}-p_{-1} x\right) \quad \text{and} \quad b(x)=G\left(1-p_{1}x\right) \quad \text{for each } x \in [0,1].\nonumber
\end{equation}
Likewise, $d_{1,2}=0$, which is equivalent to $d_{2,1}=0$, if and only if $b \circ a^{(2)} \circ b$ has a unique fixed point in $[0,1]$. In particular, whenever we have 
\begin{equation}
p_{-1}\left(1-p_{-1}\right) G'\left[1-\left(1-p_{-1}\right)G\left(p_{-1}\right)\right]G'\left[p_{-1}\left\{1-G\left(p_{-1}\right)\right\}\right] < 1,\nonumber
\end{equation}
we conclude that $d_{1,1}$ (equivalently, $d_{2,2}$) as well as $d_{1,2}$ (equivalently, $d_{2,1}$) equals $0$.
\end{enumerate}
\end{theorem}

{The following Corollary is obtained from part (\ref{kappa=3_part_2}) of Theorem \ref{thm:kappa=3_special}.
\begin{corollary}\label{cor_1}
   When $\kappa=3$, $\chi$ is either the Dirac measure at 2 or Poisson(2), and $p_0=0$, we have  $d_{i,j} = 0$ for each $(i,j)\in \{1,2\}^2$.
\end{corollary}}

When it comes to answering whether the probability of a weighted-tree-percolation game resulting in a draw equals $0$ or not, we have already explained, in the paragraph preceding Theorem~\ref{thm:kappa=2}, why the conclusions drawn in the main results (in particular, Theorem~\ref{thm:1}) of this paper are all in terms of the uniqueness, or the lack thereof, of the fixed points of certain functions in specific domains. It would be desirable to come up with more easily verifiable criteria (such as those stated in Theorem~\ref{thm:kappa=2} for $\kappa=2$) -- perhaps inequalities in terms of the parameters $p_{-1}$, $p_{0}$, $p_{1}$, and the parameter(s) underlying the offspring distribution of the rooted GW tree on which the game is being played -- such that the probability of draw equals $0$ when these criteria hold (and it is strictly positive when these criteria fail). However, for values of $\kappa$ higher than $2$ and in the most general scenario (where no additional assumptions are made about $p_{-1}$, $p_{0}$, $p_{1}$), even simple and commonly studied choices of the offspring distribution (such as when we assume that the underlying tree is rooted regular, or that the offspring distribution is binomial or Poisson) lead to extremely complicated forms for the function $h$ defined in \eqref{fixed_point_eq}. Other than finding the domain of values of $p_{-1}$, $p_{0}$, $p_{1}$ and the parameter(s) underlying the chosen offspring distribution for which $h$ is guaranteed to be a contraction -- this, in turn, would imply, by the well-known Banach Fixed Point Theorem, that $h$ has a unique fixed point in $\mathcal{S}$, and therefore, $d_{i,j}=0$ for all $(i,j) \in \{1,2,\ldots,\kappa-1\}^{2}$ -- there are no results from the literature on multivariable calculus that we can use to come up with easily and manually verifiable necessary and sufficient condition(s) for $h$ to have a unique fixed point in $\mathcal{S}$. 

In the most general scenario, even the domain in which an application of the Banach Fixed Point Theorem is possible is rather limited by the very crude bounds on the gradient $\grad h_{i,j}(X)$, where $h_{i,j}$ is the $(i,j)$-th coordinate of the function $h$ defined in \eqref{fixed_point_eq}, for $X \in \mathcal{S}$. This situation is improved to some extent when we consider $\kappa=3$, since we are able to provide a lower bound on each of $\ell_{i,j}$ and $w_{i,j}$, for each $(i,j) \in \{1,2\}^{2}$ (these bounds are where the quantities $A_{i,j}$ and $B_{i,j}$, for $(i,j) \in \{1,2\}^{2}$, come into play). Finding such lower bounds becomes an unwieldy and complicated task when $\kappa$ is increased.

\section{Simulations, and subsequent discussions, pertaining to the results of \S\ref{subsec:main_results}}\label{sec:simulations}

As mentioned earlier, when it comes to deducing easily, quickly and manually verifiable conditions that guarantee that the probability of draw equals $0$, the main hurdle to surmount lies in the need to investigate whether the function $h$, defined in \eqref{fixed_point_eq}, has a unique fixed point in $\mathcal{S}$ or not. The existing theory for analyzing multivariable functions does not provide us with a general means for checking whether $h$ (or, for that matter, the much simpler function $g$ defined in \eqref{g_composition}) has a unique fixed point in the set $\mathcal{S}$ or not. Even in highly simplified set-ups, such as when we replace $T_{\chi}$ by $T_{m}$, the rooted $m$-regular tree in which each vertex, including the root, has precisely $m$ children, for $m \geqslant 2$, concluding whether $h$ (or $g$) has a unique fixed point in $\mathcal{S}$ or not, depending on the values of the parameters $p_{-1}$, $p_{0}$ and $p_{1}$, becomes extremely difficult without the aid of a computer. This is the reason why we need to turn to simulations.   

Throughout the rest of \S\ref{sec:simulations}, we consider $\kappa=3$, so that $\mathcal{S}$ here comprises all $2 \times 2$ matrices, each of whose coordinates belongs to the interval $[0,1]$. Using the {\tt FixedPoint} package of R, the fixed points of the function $h$, belonging to the set $\mathcal{S}$, have been estimated numerically 
\begin{enumerate}
\item for various values of the parameters $p_{-1}$, $p_{0}$ and $p_{1}$ (notice that $p_{-1}=1-p_{0}-p_{1}$, so that $p_{-1}$ becomes fixed automatically once we fix the values of $p_{0}$ and $p_{1}$),
\item and for various offspring distributions of the underlying rooted GW tree (including some special cases where the offspring distribution is a Dirac measure, yielding a rooted regular tree in which every vertex has the same number of children).
\end{enumerate}
In particular, this exercise reveals whether the function $h$ has a unique fixed point in $\mathcal{S}$ or not. This, in turn, allows us, via Theorem~\ref{thm:1} (as well as Theorem~\ref{thm:2}), to conclude whether the probabilities of draw, namely $d_{1,1}$, $d_{1,2}$, $d_{2,1}$ and $d_{2,2}$, are all $0$ or all positive, in each of the cases we have considered. In what follows, 
\begin{enumerate}
\item the notation $\delta_{m}$, for any $m \in \mathbb{N}$, indicates that the offspring distribution is a Dirac measure at $m$, i.e.\ the rooted tree under consideration is $T_{m}$, in which each vertex has precisely $m$ children;
\item the notation Unif$[m]$, for any $m \in \mathbb{N}$, indicates the discrete uniform distribution on the set $\{1,\ldots,m\}$, i.e.\ where a vertex is equally likely to have $i$ children, for all $i \in \{1,\ldots,m\}$;
\item Binomial$(n,p)$, for $n \in \mathbb{N}$ and $p \in [0,1]$, indicates that a vertex of our rooted GW tree has probability ${n \choose i}p^{i}(1-p)^{n-i}$ of having $i$ children, for all $i \in \{0,1,\ldots,n\}$;
\item Poisson$(\lambda)$, for $\lambda > 0$, indicates that a vertex of our rooted GW tree has probability $e^{-\lambda}\lambda^{i}/i!$ of having $i$ children, for all $i \in \mathbb{N}_{0}$.
\end{enumerate}
In each of Tables~\ref{table:Dirac}, \ref{table:unif}, \ref{table:binomial} and \ref{table:Poisson}, the first column tells us which offspring distribution is being studied, while the second and third columns state, respectively, the values of $p_{0}$ and $p_{1}$ being considered. The fourth, fifth, sixth and seventh columns tabulate, respectively, the draw probabilities $d_{1,1}$, $d_{1,2}$, $d_{2,1}$ and $d_{2,2}$, corresponding to, respectively, the games $\mathcal{G}(1,1,3,T)[\phi]$, $\mathcal{G}(1,2,3,T)[\phi]$, $\mathcal{G}(2,1,3,T)[\phi]$ and $\mathcal{G}(2,2,3,T)[\phi]$, where $\phi$ is the root of $T$, the offspring distribution for $T$ is stated in the corresponding row, and $p_{0}$ and $p_{1}$ equal the values stated in the corresponding row. 

\begin{table}[!ht]
    \centering
    \begin{tabular}{|l|l|l|l|l|l|l|}
    \hline
        Distribution & $p_{0}$ & $p_{1}$ & $d_{1,1}$ & $d_{1,2}$ & $d_{2,1}$ & $d_{2,2}$ \\ \hline
        $\delta_{2}$ & $0.8$ & $0.1$ & $3.025 \cdot 10^{-4}$ & $4.54 \cdot 10^{-5}$ & $1.95 \cdot 10^{-5}$ & $8.28 \cdot 10^{-5}$ \\ \hline
        ~ & $0.8$ & $0.15$ & $1.97 \cdot 10^{-9}$ & $1.18 \cdot 10^{-10}$ & $5.36 \cdot 10^{-11}$ & $1.16 \cdot 10^{-10}$ \\ \hline
        ~ & $0.9$ & $0.05$ & $0.985521946$ & $0.953909988$ & $0.827344951$ & $0.865247284$ \\ \hline
        ~ & $0.9$ & $0.075$ & $0.982516861$ & $0.906997229$ & $0.716012057$ & $0.760068679$ \\ \hline
        ~ & $0.95$ & $0.025$ & $0.997569724$ & $0.99239292$ & $0.936814203$ & $0.942461793$ \\ \hline
        ~ & $0.95$ & $0.03$ & $0.998158685$ & $0.991029043$ & $0.924885509$ & $0.928890449$ \\ \hline
        ~ & $0.99$ & $0.001$ & $0.999749449$ & $0.999873892$ & $0.997750077$ & $0.997989003$ \\ \hline
        ~ & $0.99$ & $0.005$ & $0.999921563$ & $0.999762178$ & $0.989558289$ & $0.989720379$ \\ \hline
        $\delta_{5}$ & $0.8$ & $0.1$ & $0.644971286$ & $0.232637754$ & $0.002239967$ & $0.503766966$ \\ \hline
        ~ & $0.8$ & $0.15$ & $0.373571553$ & $0.055377224$ & $0.000135404$ & $0.081651632$ \\ \hline
        ~ & $0.9$ & $0.05$ & $0.999641612$ & $0.998453766$ & $0.768565913$ & $0.771550871$ \\ \hline
        ~ & $0.9$ & $0.075$ & $0.997810684$ & $0.993132385$ & $0.658155123$ & $0.665497296$ \\ \hline
        ~ & $0.95$ & $0.025$ & $0.999992937$ & $0.999941319$ & $0.880855111$ & $0.880984424$ \\ \hline
        ~ & $0.95$ & $0.03$ & $0.99998481$ & $0.999891554$ & $0.858308778$ & $0.858480712$ \\ \hline
        ~ & $0.99$ & $0.001$ & $1$ & $0.999999999$ & $0.995009987$ & $0.99500999$ \\ \hline
        ~ & $0.99$ & $0.005$ & $0.999999999$ & $0.999999977$ & $0.975248642$ & $0.9752487$ \\ \hline
        $\delta_{10}$ & $0.8$ & $0.1$ & $0.457819736$ & $0.252766466$ & $9.20 \cdot 10^{-6}$ & $0.291509514$ \\ \hline
        ~ & $0.9$ & $0.05$ & $0.999885808$ & $0.999738867$ & $0.59744268$ & $0.598300183$ \\ \hline
        ~ & $0.9$ & $0.075$ & $0.997437323$ & $0.995761624$ & $0.44354551$ & $0.452649668$ \\ \hline
        ~ & $0.95$ & $0.025$ & $0.99999982$ & $0.999999219$ & $0.776324172$ & $0.776327537$ \\ \hline
        ~ & $0.95$ & $0.03$ & $0.999999192$ & $0.999996963$ & $0.737404681$ & $0.737414678$ \\ \hline
        ~ & $0.99$ & $0.001$ & $1$ & $1$ & $0.99004488$ & $0.99004488$ \\ \hline
        ~ & $0.99$ & $0.005$ & $1$ & $1$ & $0.95111013$ & $0.95111013$ \\ \hline
    \end{tabular}
\caption{Probabilities of draw for $\mathcal{G}(i,j,\kappa,T)[\phi]$, where $\kappa=3$, $(i,j) \in \{1,2\}^{2}$, $T$ has offspring distribution $\delta_{m}$ for $m \in \{2,5,10\}$, and various values of $p_{0}$ and $p_{1}$ are considered.}
\label{table:Dirac}
\end{table}
Let us note here that in Table~\ref{table:Dirac}, some of the cells in the rows corresponding to $p_{0}=0.99$ equal $1$ (for instance, when we consider the offspring distribution to be $\delta_{10}$, $p_{0}=0.99$ and $p_{1} \in \{0.001,0.005\}$, both $d_{1,1}$ and $d_{1,2}$ equal $1$). We surmise that these values aren't \emph{truly} equal to $1$, but rather, so close to $1$ as to be deemed essentially equal to $1$ by the computer. This, we believe, is a consequence of choosing $p_{0}=0.99$, which ensures that nearly every edge of $T$ (which, when endowed with the offspring distribution $\delta_{10}$, is a rooted $10$-regular tree, with each vertex having precisely $10$ children) bears the edge-weight $0$, thus forcing draw to become an overwhelmingly likely outcome for each of the games $\mathcal{G}(1,1,3,T)[\phi]$ and $\mathcal{G}(1,2,3,T)[\phi]$. On the other hand, when the offspring distribution is $\delta_{10}$, $p_{0}=0.99$ and $p_{1} \in \{0.001,0.005\}$, that $d_{2,1}$ is not \emph{that} close to $1$ so as to be treated as essentially equal to $1$ can perhaps be attributed to the fact that $d_{2,1}$ is the probability of draw for the game $\mathcal{G}(2,1,3,T)[\phi]$. We intuit that, even when $p_{0}$ is as large a value as $0.99$, $w_{2,1}$, the probability of $P_{1}$ winning $\mathcal{G}(2,1,3,T)[\phi]$, is significant enough possibly because $P_{1}$, in addition to enjoying the advantage of playing the first round of the game, begins with a larger initial capital than $P_{2}$. Note, further, that even though $\ell_{1,2} \geqslant \ell_{2,1}$, we suspect that neither of them plays a significant role here (i.e.\ when $p_{0}=0.99$). This is because $\ell_{1,2}$ is the probability of $P_{1}$ losing the game $\mathcal{G}(1,2,3,T)[\phi]$, and although she begins with a lower amount of initial capital than her opponent, the fact that she plays the first round of the game gives her an edge over $P_{2}$ and makes her not so easy to defeat (in other words, $\ell_{1,2}$ is likely negligible, which also makes $\ell_{2,1}$ negligible).

A more complicated, though intuitive, explanation can perhaps be proposed, as follows, for the reason why $d_{1,1}$ is a lot closer to $1$ than $d_{2,2}$ when the offspring distribution is $\delta_{10}$, $p_{0}=0.99$ and $p_{1} \in \{0.001,0.005\}$. Recall that $d_{1,1}$ is the probability of draw for the game $\mathcal{G}(1,1,3,T)[\phi]$ in which each of $P_{1}$ and $P_{2}$ begins with an initial capital worth $1$ unit. Given a vertex $v$ of $T$, with children $v_{1}, v_{2}, \ldots, v_{m}$, the probability of the event that each of the edges $(v,v_{1}), \ldots, (v,v_{m})$ is assigned the weight $-1$ is very low, since $p_{0}=0.99$. Consequently, the probability that the game ever unfolds in a manner such that the token reaches such a vertex $v$ is extremely low as well. This tells us that $P_{1}$ winning $\mathcal{G}(1,1,3,T)[\phi]$ by forcing $P_{2}$ to squander her capital (which only happens if $P_{2}$ is \emph{forced} to traverse an edge with edge-weight $-1$) is a highly unlikely event. The only other way for $P_{1}$ to win $\mathcal{G}(1,1,3,T)[\phi]$ is by amassing a total capital worth $3$ units (since $\kappa=3$). This, of course, is a more difficult goal to accomplish when $P_{1}$ begins with an initial capital of $1$ unit than when she begins with an initial capital of $2$ units. This brings us to $d_{2,2}$, which is the probability of draw for the game $\mathcal{G}(2,2,3,T)[\phi]$. The preceding argument, we hope, provides an intuition as to why the probability of $P_{1}$ winning $\mathcal{G}(2,2,3,T)[\phi]$ by achieving the target total capital worth $3$ units ought to be higher than $P_{1}$ winning $\mathcal{G}(1,1,3,T)[\phi]$ by fulfilling the same criterion. We surmise that this, among other reasons, perhaps serves to justify why $d_{1,1}$ is so close to $1$ as to be considered indistinguishable from $1$ by the computer, while $d_{2,2}$ is not.  
\begin{table}[!ht]
    \centering
    \begin{tabular}{|l|l|l|l|l|l|l|}
    \hline
        Distribution & $p_{0}$ & $p_{1}$ & $d_{1,1}$ & $d_{1,2}$ & $d_{2,1}$ & $d_{2,2}$ \\ \hline
        Unif$[2]$ & $0.8$ & $0.1$ & $1.07 \cdot 10^{-9}$ & $7.3 \cdot 10^{-10}$ & $5.5 \cdot 10^{-10}$ & $6.48 \cdot 10^{-10}$ \\ \hline
        ~ & $0.8$ & $0.15$ & $7.05 \cdot 10^{-10}$ & $2.64 \cdot 10^{-10}$ & $1.97 \cdot 10^{-10}$ & $1.31 \cdot 10^{-10}$ \\ \hline
        ~ & $0.9$ & $0.05$ & $4.4 \cdot 10^{-9}$ & $1.79 \cdot 10^{-9}$ & $1.27 \cdot 10^{-9}$ & $2.4 \cdot 10^{-9}$ \\ \hline
        ~ & $0.9$ & $0.075$ & $2.51 \cdot 10^{-9}$ & $5.25 \cdot 10^{-10}$ & $3.75 \cdot 10^{-10}$ & $4.09 \cdot 10^{-10}$ \\ \hline
        ~ & $0.95$ & $0.025$ & $0.539212545$ & $0.137032613$ & $0.089145207$ & $0.466602762$ \\ \hline
        ~ & $0.95$ & $0.03$ & $0.517066998$ & $0.124672449$ & $0.079709669$ & $0.399299428$ \\ \hline
        ~ & $0.99$ & $0.001$ & $0.95210633$ & $0.97569076$ & $0.963046169$ & $0.989787602$ \\ \hline
        ~ & $0.99$ & $0.005$ & $0.973650568$ & $0.97100973$ & $0.950543282$ & $0.953273448$ \\ \hline
        Unif$[5]$ & $0.8$ & $0.1$ & $1.43 \cdot 10^{-9}$ & $4.53 \cdot 10^{-10}$ & $1.89 \cdot 10^{-10}$ & $2.68 \cdot 10^{-10}$ \\ \hline
        ~ & $0.8$ & $0.15$ & $5.86 \cdot 10^{-10}$ & $9.68 \cdot 10^{-11}$ & $4.05 \cdot 10^{-11}$ & $2.57 \cdot 10^{-11}$ \\ \hline
        ~ & $0.9$ & $0.05$ & $0.476469522$ & $0.134183225$ & $0.041925715$ & $0.281645405$ \\ \hline
        ~ & $0.9$ & $0.075$ & $2.33 \cdot 10^{-7}$ & $1.25 \cdot 10^{-8}$ & $4.91 \cdot 10^{-9}$ & $5.53 \cdot 10^{-9}$ \\ \hline
        ~ & $0.95$ & $0.025$ & $0.93710694$ & $0.917994318$ & $0.746793593$ & $0.771564456$ \\ \hline
        ~ & $0.95$ & $0.03$ & $0.940502744$ & $0.900282827$ & $0.70427371$ & $0.721945807$ \\ \hline
        ~ & $0.99$ & $0.001$ & $0.981846065$ & $0.993579999$ & $0.978510779$ & $0.990858348$ \\ \hline
        ~ & $0.99$ & $0.005$ & $0.989661771$ & $0.989082001$ & $0.954409496$ & $0.95501548$ \\ \hline
        Unif$[10]$ & $0.8$ & $0.1$ & $10^{-9}$ & $2.37 \cdot 10^{-10}$ & $6.06 \cdot 10^{-11}$ & $6.38 \cdot 10^{-11}$ \\ \hline
        ~ & $0.8$ & $0.15$ & $4.02 \cdot 10^{-10}$ & $5.68 \cdot 10^{-11}$ & $1.43 \cdot 10^{-11}$ & $5.49 \cdot 10^{-12}$ \\ \hline
        ~ & $0.9$ & $0.05$ & $0.400761157$ & $0.104575057$ & $0.017271264$ & $0.154663886$ \\ \hline
        ~ & $0.9$ & $0.075$ & $4.57 \cdot 10^{-9}$ & $1.84 \cdot 10^{-10}$ & $4.37 \cdot 10^{-11}$ & $2.73 \cdot 10^{-11}$ \\ \hline
        ~ & $0.95$ & $0.025$ & $0.948416337$ & $0.932479894$ & $0.655127956$ & $0.678855965$ \\ \hline
        ~ & $0.95$ & $0.03$ & $0.945658255$ & $0.910244623$ & $0.586954161$ & $0.612376352$ \\ \hline
        ~ & $0.99$ & $0.001$ & $0.986966425$ & $0.996543296$ & $0.976576626$ & $0.986571106$ \\ \hline
        ~ & $0.99$ & $0.005$ & $0.992371209$ & $0.991874236$ & $0.933023436$ & $0.933551923$ \\ \hline
 \end{tabular}
\caption{Probabilities of draw for $\mathcal{G}(i,j,\kappa,T)[\phi]$, where $\kappa=3$, $(i,j) \in \{1,2\}^{2}$, $T$ has offspring distribution Unif$[m]$ for $m \in \{2,5,10\}$, and various values of $p_{0}$ and $p_{1}$ are considered.}
\label{table:unif}
\end{table}
\begin{table}[!ht]
    \centering
    \begin{tabular}{|l|l|l|l|l|l|l|}
    \hline
        Distribution & $p_{0}$ & $p_{1}$ & $d_{1,1}$ & $d_{1,2}$ & $d_{2,1}$ & $d_{2,2}$ \\ \hline
        Binomial$(20,0.2)$ & $0.8$ & $0.1$ & $3.89 \cdot 10^{-9}$ & $4.04 \cdot 10^{-10}$ & $1.22 \cdot 10^{-10}$ & $2.39 \cdot 10^{-10}$ \\ \hline
        ~ & $0.8$ & $0.15$ & $7.26 \cdot 10^{-10}$ & $3.53 \cdot 10^{-11}$ & $1.11 \cdot 10^{-11}$ & $7.8 \cdot 10^{-12}$ \\ \hline
        ~ & $0.9$ & $0.05$ & $0.875627796$ & $0.824890093$ & $0.528638999$ & $0.616452939$ \\ \hline
        ~ & $0.9$ & $0.075$ & $0.541356592$ & $0.1406333$ & $0.018061161$ & $0.344140159$ \\ \hline
        ~ & $0.95$ & $0.025$ & $0.911054219$ & $0.904927347$ & $0.772964943$ & $0.780557631$ \\ \hline
        ~ & $0.95$ & $0.03$ & $0.912013629$ & $0.899774767$ & $0.745609615$ & $0.749889441$ \\ \hline
        ~ & $0.99$ & $0.001$ & $0.920357757$ & $0.923372715$ & $0.91523159$ & $0.918408287$ \\ \hline
        ~ & $0.99$ & $0.005$ & $0.921939033$ & $0.921746689$ & $0.895744495$ & $0.895944552$ \\ \hline
        Binomial$(20,0.5)$ & $0.8$ & $0.1$ & $0.434900217$ & $0.217984923$ & $0.000168193$ & $0.250856166$ \\ \hline
        ~ & $0.8$ & $0.15$ & $1.24 \cdot 10^{-9}$ & $1.48 \cdot 10^{-13}$ & $1.78 \cdot 10^{-14}$ & $9.51 \cdot 10^{-15}$ \\ \hline
        ~ & $0.9$ & $0.05$ & $0.999329571$ & $0.998398528$ & $0.595549345$ & $0.598721543$ \\ \hline
        ~ & $0.9$ & $0.075$ & $0.99314159$ & $0.988160254$ & $0.427339199$ & $0.446692967$ \\ \hline
        ~ & $0.95$ & $0.025$ & $0.999965279$ & $0.999905569$ & $0.77702589$ & $0.777162214$ \\ \hline
        ~ & $0.95$ & $0.03$ & $0.999947345$ & $0.999838756$ & $0.738238467$ & $0.73842378$ \\ \hline
        ~ & $0.99$ & $0.001$ & $0.999987548$ & $0.999989159$ & $0.990032709$ & $0.990034709$ \\ \hline
        ~ & $0.99$ & $0.005$ & $0.999988377$ & $0.999987723$ & $0.951143286$ & $0.951144104$ \\ \hline
        Binomial$(20,0.9)$ & $0.8$ & $0.1$ & $0.116600749$ & $2.74 \cdot 10^{-9}$ & $1.17 \cdot 10^{-12}$ & $1.63 \cdot 10^{-9}$ \\ \hline
        ~ & $0.8$ & $0.15$ & $9.29 \cdot 10^{-11}$ & $1.23 \cdot 10^{-14}$ & $9.15 \cdot 10^{-24}$ & $2.96 \cdot 10^{-15}$ \\ \hline
        ~ & $0.9$ & $0.05$ & $0.999772079$ & $0.999692862$ & $0.396461797$ & $0.39770508$ \\ \hline
        ~ & $0.9$ & $0.075$ & $0.365969238$ & $0.272115109$ & $5.91 \cdot 10^{-10}$ & $0.234283025$ \\ \hline
        ~ & $0.95$ & $0.025$ & $0.999999961$ & $0.9999999$ & $0.634358359$ & $0.634358947$ \\ \hline
        ~ & $0.95$ & $0.03$ & $0.999999659$ & $0.999999231$ & $0.57843156$ & $0.578435146$ \\ \hline
        ~ & $0.99$ & $0.001$ & $1$ & $1$ & $0.982153072$ & $0.982153072$ \\ \hline
        ~ & $0.99$ & $0.005$ & $1$ & $1$ & $0.913745576$ & $0.913745576$ \\ \hline
 \end{tabular}
\caption{Probabilities of draw for $\mathcal{G}(i,j,\kappa,T)[\phi]$, where $\kappa=3$, $(i,j) \in \{1,2\}^{2}$, $T$ has offspring distribution Binomial$(m,p)$, with $m=20$ and $p \in \{0.2,0.5,0.9\}$, and various values of $p_{0}$ and $p_{1}$ are considered.}\label{table:binomial}
\end{table}
\begin{table}[!ht]
    \centering
    \begin{tabular}{|l|l|l|l|l|l|l|}
    \hline
        Distribution & $p_{0}$ & $p_{1}$ & $d_{1,1}$ & $d_{1,2}$ & $d_{2,1}$ & $d_{2,2}$ \\ \hline
        Poisson$(2)$ & $0.8$ & $0.1$ & $2.17 \cdot 10^{-10}$ & $1.33 \cdot 10^{-10}$ & $1.02 \cdot 10^{-10}$ & $7.26 \cdot 10^{-11}$ \\ \hline
        ~ & $0.8$ & $0.15$ & $1.76 \cdot 10^{-10}$ & $5.88 \cdot 10^{-11}$ & $4.46 \cdot 10^{-11}$ & $1.63 \cdot 10^{-11}$ \\ \hline
        ~ & $0.9$ & $0.05$ & $2.66 \cdot 10^{-10}$ & $1.54 \cdot 10^{-10}$ & $1.28 \cdot 10^{-10}$ & $8.77 \cdot 10^{-11}$ \\ \hline
        ~ & $0.9$ & $0.075$ & $2.71 \cdot 10^{-10}$ & $8.39 \cdot 10^{-11}$ & $6.92 \cdot 10^{-11}$ & $2.46 \cdot 10^{-11}$ \\ \hline
        ~ & $0.95$ & $0.025$ & $3.63 \cdot 10^{-10}$ & $2.13 \cdot 10^{-10}$ & $1.89 \cdot 10^{-10}$ & $1.27 \cdot 10^{-10}$ \\ \hline
        ~ & $0.95$ & $0.03$ & $3.56 \cdot 10^{-10}$ & $1.69 \cdot 10^{-10}$ & $1.5 \cdot 10^{-10}$ & $8.08 \cdot 10^{-11}$ \\ \hline
        ~ & $0.99$ & $0.001$ & $3.53 \cdot 10^{-10}$ & $4.08 \cdot 10^{-10}$ & $4 \cdot 10^{-10}$ & $4.8 \cdot 10^{-10}$ \\ \hline
        ~ & $0.99$ & $0.005$ & $4.22 \cdot 10^{-10}$ & $3.24 \cdot 10^{-10}$ & $3.17 \cdot 10^{-10}$ & $2.51 \cdot 10^{-10}$ \\ \hline
        Poisson$(5)$ & $0.8$ & $0.1$ & $0.385781978$ & $0.156743617$ & $0.000537071$ & $0.177277163$ \\ \hline
        ~ & $0.8$ & $0.15$ & $7.57 \cdot 10^{-10}$ & $7.86 \cdot 10^{-13}$ & $9.36 \cdot 10^{-14}$ & $4.82 \cdot 10^{-14}$ \\ \hline
        ~ & $0.9$ & $0.05$ & $0.997336083$ & $0.994716126$ & $0.58606368$ & $0.59281647$ \\ \hline
        ~ & $0.9$ & $0.075$ & $0.982944993$ & $0.971342458$ & $0.387375146$ & $0.430096468$ \\ \hline
        ~ & $0.95$ & $0.025$ & $0.999253277$ & $0.998942475$ & $0.774883315$ & $0.775403991$ \\ \hline
        ~ & $0.95$ & $0.03$ & $0.999191886$ & $0.998651597$ & $0.735577616$ & $0.736130206$ \\ \hline
        ~ & $0.99$ & $0.001$ & $0.999455448$ & $0.999492983$ & $0.989459672$ & $0.989501555$ \\ \hline
        ~ & $0.99$ & $0.005$ & $0.999473495$ & $0.99946678$ & $0.950446251$ & $0.950453791$ \\ \hline
        Poisson$(10)$ & 
        $0.95$ & $0.025$ & $0.998824693$ & $0.999458906$ & $0.078497545$ & $0.081836495$ \\ \hline
        ~ & $0.95$ & $0.03$ & $0.073616655$ & $0.130342044$ & $7.7 \cdot 10^{-38}$ & $0.046636094$ \\ \hline
        ~ & $0.99$ & $0.001$ & $1$ & $1$ & $0.904837418$ & $0.904837418$ \\ \hline
        ~ & $0.99$ & $0.005$ & $1$ & $1$ & $0.60653066$ & $0.60653066$ \\ \hline
    \end{tabular}
\caption{Probabilities of draw for $\mathcal{G}(i,j,\kappa,T)[\phi]$, where $\kappa=3$, $(i,j) \in \{1,2\}^{2}$, $T$ has offspring distribution Poisson$(\lambda)$ for $\lambda \in \{2,5,10\}$, and various values of $p_{0}$ and $p_{1}$ are considered.}
\label{table:Poisson}
\end{table}

It is important to note here that any numerical estimation, no matter how precise, would involve some errors. In the simulations performed here, our task involves 
\begin{enumerate}
\item estimating all possible fixed points of the function $h$ within $[0,1]^{4}$, 
\item identifying the `largest' fixed point, i.e.\ the fixed point each of whose coordinates is larger than the corresponding coordinate of every other fixed point of $h$ in $[0,1]^{4}$ (that such a fixed point exists is ensured by the fact that, as proved in Equation~\eqref{smallest_largest_fixed_point} of Theorem~\ref{thm:1}, the `largest' fixed point equals the matrix 
$\begin{bmatrix}
1-w_{1,1} & 1-w_{1,2}\\
1-w_{2,1} & 1-w_{2,2}
\end{bmatrix}$),
\item identifying the `smallest' fixed point, i.e.\ the fixed point each of whose coordinates is smaller than the corresponding coordinate of every other fixed point of $h$ in $[0,1]^{4}$ (that such a fixed point exists is ensured by the fact that, as proved in Equation~\eqref{smallest_largest_fixed_point} of Theorem~\ref{thm:1}, the `smallest' fixed point equals the matrix 
$\begin{bmatrix}
\ell_{1,1} & \ell_{1,2}\\
\ell_{2,1} & \ell_{2,2}
\end{bmatrix}$),
\item and finally, obtaining the difference between these two fixed points, which, obviously, equals 
$\begin{bmatrix}
d_{1,1} & d_{1,2}\\
d_{2,1} & d_{2,2}
\end{bmatrix}$. 
\end{enumerate}
Evidently, when $h$ has a unique fixed point in $[0,1]^{4}$, the `largest' and `smallest' fixed points coincide, resulting in $d_{i,j}=0$ for each $(i,j) \in \{1,2\}^{2}$. In some of the rows of Tables~\ref{table:Dirac}, \ref{table:unif}, \ref{table:binomial} and \ref{table:Poisson} (for instance, in the rows corresponding to the offspring distribution Unif$[2]$ and $p_{0} \in \{0.8, 0.9\}$ in Table~\ref{table:unif}), we see that each entry of the difference between the `largest' and the `smallest' fixed points, i.e.\ the values of $d_{1,1}$, $d_{1,2}$, $d_{2,1}$ and $d_{2,2}$, is extremely small (of the order of magnitude $10^{-10}$, $10^{-11}$, $10^{-12}$ etc.). In such a situation, it is difficult for us to be certain whether there are actually multiple fixed points of the function $h$ in $[0,1]^{4}$, thereby leading to positive values for the probabilities of draw, or whether there is, in fact, only one fixed point of $h$ in $[0,1]^{4}$, and the apparent non-zero difference between the `largest' and the `smallest' fixed points is merely a result of fluctuations arising due to the method of numerical estimation used. On the other hand, there are instances where, despite one or more of the numerically estimated probabilities of draw being very small, we are able to conclude that they are indeed strictly positive and do not merely \emph{appear} to be positive due to errors in our estimation method. For example, when the offspring distribution is Binomial$(20,0.9)$, $p_{0}=0.9$ and $p_{1}=0.075$, we see that $d_{2,1}=5.91 \cdot 10^{-10}$ in Table~\ref{table:binomial} -- however, in this case, each of $d_{1,1}$, $d_{1,2}$ and $d_{2,2}$ assumes a significantly large positive value, so that by an application of Theorem~\ref{thm:2}, we can be certain that $d_{2,1}$ is indeed strictly positive.

In this context, it is fascinating to observe, from each of Tables~\ref{table:Dirac}, \ref{table:unif}, \ref{table:binomial} and \ref{table:Poisson}, that for a fixed offspring distribution, increasing the value of $p_{0}$ (which, in turn, results in a reduction in the value of at least one of $p_{1}$ and $p_{-1}$) \emph{usually} results in the probabilities of draw eventually becoming \emph{decisively} strictly positive. Here, by \emph{`decisively'} we mean that the value of at least one of $d_{1,1}$, $d_{1,2}$, $d_{2,1}$ and $d_{2,2}$ becomes significantly large -- large enough for us to be able to conclude, with confidence, that there are, indeed, multiple fixed points of $h$ in $[0,1]^{4}$, and that the probabilities of draw aren't merely \emph{appearing} to be positive due to errors of numerical estimation. Exceptions to this have also been observed, though: as seen from Table~\ref{table:Poisson}, when $T$ follows certain offspring distributions such as Poisson$(2)$, it may be the case that for all $p_{0} \in [0,1)$, and all $p_{1} \in [0,1-p_{0}]$, numerical estimations are unable to help us decide, with any extent of certainty, whether there is a unique fixed point of $h$ in $[0,1]^{4}$ or not.

Even though we find it challenging to conclude that the probability of draw, $d_{i,j}$, for any $(i,j) \in \{1,2\}^{2}$, increases as $p_{0}$ is increased, keeping the offspring distribution fixed (such a result seems untenable due to the adversarial element present in the game), it seems as if the following is true for every fixed offspring distribution $\chi$: Let $T$ be the rooted GW tree endowed with $\chi$. Note that the probability of the event that the game $\mathcal{G}(i,j,\kappa,T)[\phi]$ results in a draw relies heavily on the parameters $p_{0}$, $p_{1}$ and $p_{-1}$ (where $p_{-1}=1-p_{0}-p_{1}$). In order to emphasize this dependence, let us replace the notation $d_{i,j}$ by $d_{i,j}(p_{0},p_{1},p_{-1})$, for each $(i,j) \in \{1,2,\ldots,\kappa-1\}^{2}$. If there exist some $\rho_{0} \in [0,1)$ and some $\rho_{1} \in [0,1-\rho_{0}]$ such that $d_{i,j}(\rho_{0},\rho_{1},\rho_{-1})$, with $\rho_{-1}=1-\rho_{0}-\rho_{1}$, is strictly positive, then $d_{i,j}(p_{0},p_{1},p_{-1})$ ought to be strictly positive whenever
\begin{equation}
p_{0} \in (\rho_{0},1],\ p_{1} \in [0,\rho_{1}] \text{ and } p_{-1}=1-p_{0}-p_{1} \in [0,\rho_{-1}].\nonumber
\end{equation}
This can be posed as a conjecture that describes a \emph{phase transition phenomenon} occurring in the probability of draw in our game (when $(p_{0},p_{1})$ is varied appropriately while keeping the offspring distribution fixed). It can be substantiated via further simulations, and it deserves deeper and more intensive investigations.

Although we are not able to prove, analytically, that $d_{1,2}$ should be at least as large as $d_{2,1}$, we see that this is true in every row of each of Tables~\ref{table:Dirac}, \ref{table:unif}, \ref{table:binomial} and \ref{table:Poisson}. Despite the possibility of errors creeping into our numerical estimation, such a consistent domination of the estimated value of $d_{2,1}$ by the estimated value of $d_{1,2}$ leads us to state a second conjecture: that indeed, the true value of $d_{1,2}$ is greater than or equal to the true value of $d_{2,1}$. We further observe that, except for the cases where the offspring distribution is either Unif$[2]$ or Unif$[5]$, $p_{0}=0.99$ and $p_{1}=0.001$, the estimated value of $d_{1,1}$ exceeds the estimated value of $d_{2,2}$. There may be an inequality that holds in this case as well, but we refrain from posing a conjecture in this case because of the exceptions noted in Table~\ref{table:unif}.

\subsection{Simulations pertaining to Theorem \ref{thm:kappa=3}} We now present some numerical computations and simulations in order to illustrate the applicability of Theorem \ref{thm:kappa=3}, for various underlying offspring distributions of the rooted Galton-Watson tree under consideration. It is to be borne in mind that the criteria posited in Theorem~\ref{thm:kappa=3} is a \emph{sufficient} one for ensuring that the probability of draw, $d_{i,j}$, in each of the games $\mathcal{G}(i,j,3,T)[\phi]$, for $(i,j) \in \{1,2\}^{2}$, equals $0$, when $\kappa=3$. Therefore, if any of the $E_{i,j}$s, for $(i,j) \in \{1,2\}^{2}$, turns out to be at least as large as $1$, simulations prove to be the only useful means for examining whether the corresponding function $F$ (as defined via \eqref{ell_{1,1}}, \eqref{ell_{1,2}}, \eqref{ell_{2,1}} and \eqref{ell_{2,2}} in \S\ref{sec:kappa=3_proofs}) has multiple fixed points in $[0,1]^{4}$ (which, in turn, would imply that at least one of the probabilities $d_{i,j}$, for $(i,j) \in \{1,2\}^{2}$, is strictly positive).

For the values of $\lambda$, $p_{1}$ and $p_{-1}$ (the last two determine the value of $p_{0}=1-p_{-1}-p_{1}$) listed in Table~\ref{kappa=3_Poisson_11}, we show that $E_{i,j} < 1$ for each $(i,j) \in \{1,2\}^{2}$, thereby establishing, via Theorem~\ref{thm:kappa=3}, that $d_{i,j}=0$ for each $(i,j) \in \{1,2\}^{2}$ when $\kappa=3$, the offspring distribution of $T$ is Poisson$(\lambda)$, and the edge-weights worth $-1$, $0$ and $1$ are assigned to each edge of $T$ with probabilities $p_{-1}$, $p_{0}$ and $p_{1}$ respectively. For instance, when $\lambda=5$ and $p_{-1}=p_{1}=0.35$, we have 
\begin{align}
{}&A_{1,1}=A_{1,2}=G(p_{-1})=e^{5(0.35-1)} \approx 0.039,\nonumber\\
{}&B_{2,1}=B_{2,2}=G(1-p_{1})=e^{-5 \cdot 0.35}\approx 0.174,\nonumber
\end{align}
and slightly more involved computations yield
\begin{align}
A_{2,1}={}&G\left[p_{-1}\left\{1-G\left[1-\left(1-p_{-1}\right)G\left(p_{-1}\right)\right]\right\}\right]\nonumber\\
={}&\exp\left\{5\left(0.35\left\{1-\exp\left\{-5 \cdot (1-0.35)e^{5(0.35-1)}\right\}\right\}-1\right)\right\} \approx 0.00828,\nonumber\\
A_{2,2}={}&G\left[p_{-1}\left\{1-G\left\{1-G\left(p_{-1}\right)\right\}\right\}\right]+G\left[\left(1-p_{1}\right)\left\{1-G\left(1-p_{1}\right)\right\}\right]\nonumber\\&-G\left[p_{-1}\left[1-G\left(1-p_{1}\right)-G\left[1-G\left(p_{-1}\right)\right]+G\left[\left(1-p_{1}\right)\left\{1-G\left(p_{-1}\right)\right\}\right]\right]\right]\nonumber\\
={}&\exp\left\{5\left(0.35\left\{1-\exp\left\{-5 e^{5(0.35-1)}\right\}\right\}-1\right)\right\}+\exp\left\{5\left((1-0.35)\left\{1-e^{-5 \cdot 0.35}\right\}-1\right)\right\}\nonumber\\&-\exp\left\{5\left(0.35\left[1-e^{-5 \cdot 0.35}-\exp\left\{-5 \cdot e^{5(0.35-1)}\right\}+\exp\left\{5\left((1-0.35)\left\{1-e^{5(0.35-1)}\right\}-1\right)\right\}\right]-1\right)\right\}\nonumber\\& \approx 0.0991,\nonumber\\
B_{1,1}={}&G\left[1-\left(1-p_{-1}\right)G\left(p_{-1}\right)\right]-G\left[\left(1-p_{-1}\right)\left\{1-G\left(p_{-1}\right)\right\}+p_{-1}\right]+G[\left(1-p_{-1}\right)\left\{1-G\left(p_{-1}\right)\right\}\nonumber\\&+p_{-1}-p_{1}G\left\{1-G\left(1-p_{1}\right)\right\}+p_{1}G\left[p_{-1}\left\{1-G\left(1-p_{1}\right)\right\}\right]]\nonumber\\
={}&\exp\left\{-5(1-0.35) e^{5(0.35-1)}\right\}-\exp\left\{5\left((1-0.35)\left\{1-e^{5(0.35-1)}\right\}+0.35-1\right)\right\}\nonumber\\&+\exp\Big\{5\Big((1-0.35)\left\{1-e^{5(0.35-1)}\right\}+0.35-0.35\exp\left\{-5e^{-5 \cdot 0.35}\right\}+0.35\exp\left\{5\left(0.35\left\{1-e^{-5 \cdot 0.35}\right\}-1\right)\right\}\nonumber\\&-1\Big)\Big\} \approx 0.44488.\nonumber\\
B_{1,2}={}&G\left[1-p_{1}G\left[\left(1-p_{1}\right)\left\{1-G\left(1-p_{1}\right)\right\}\right]\right]=\exp\left\{-5 \cdot 0.35 \exp\left\{5\left((1-0.35)\left\{1-e^{-5 \cdot 0.35}\right\}-1\right)\right\}\right\}\nonumber\\& \approx 0.84123.\nonumber
\end{align}
Having computed these values, we plug them into the expressions for $E_{i,j}$ defined in \eqref{E_{i,j}_defn}, for each $(i,j) \in \{1,2\}^{2}$, to obtain $E_{1,1}\approx 0.68$, $E_{1,2}\approx 0.773$, $E_{2,1} \approx 0.749$ and $E_{2,2}\approx 0.851$, as stated in the corresponding row of Table~\ref{kappa=3_Poisson_11}. 

On the other hand, for the values of $\lambda$, $p_{1}$ and $p_{-1}$ listed in Table~\ref{kappa=3_Poisson_13}, at least one $E_{i,j}$, for $(i,j) \in \{1,2\}$ and $\kappa=3$, exceeds $1$. This forces us to further investigate the corresponding function $h$ (see \eqref{fixed_point_eq} for the definition of $h$), and examine, \emph{numerically}, whether it has multiple fixed points in $\mathcal{S}$ (which, in this case, is the space of all $2 \times 2$ matrices with each entry in $[0,1]$, as defined right before the statement of Theorem~\ref{thm:1}). In the proof of Theorem~\ref{kappa=3_Poisson_13}, outlined in \S\ref{sec:kappa=3_proofs}, we work with the function $F$ (see \eqref{ell_{1,1}}, \eqref{ell_{1,2}}, \eqref{ell_{2,1}} and \eqref{ell_{2,2}}), which is nothing but $h$ written as a $4 \times 1$ column vector as opposed to a $2 \times 2$ matrix, and we examine the fixed points of $F$ in $[0,1]^{4}$ as opposed to those of $h$ in $\mathcal{S}$. In the rest of \S\ref{sec:simulations}, we refer to this function $F$ as opposed to the function $h$. In some of the situations considered in Table~\ref{kappa=3_Poisson_13}, simulations have shown the existence of multiple fixed points of $F$ in $[0,1]^{4}$, thereby proving that $d_{i,j} > 0$ for each $(i,j) \in \{1,2\}^{2}$, while in the remainder of such situations, simulations indicate that $F$ has a unique fixed point in $[0,1]^{4}$. However, despite the fact that Theorem~\ref{thm:kappa=3} is not applicable in the situations listed in Table~\ref{kappa=3_Poisson_13}, a rough pattern becomes visible from this table. In particular, we note that when at least one $E_{i,j}$ is ``significantly" larger than $1$ (what counts as ``significant" depends, possibly among other aspects, on the value of $\lambda$ under consideration), simulations indicate that the function $F$ has multiple fixed points in $[0,1]^{4}$. For instance, when $\lambda=5$ and $p_{1}=p_{-1}=0.1$, computations show that $\max\{E_{i,j}: (i,j) \in \{1,2\}^{2}\} \approx 14.109$, and when $\lambda=10$ and $p_{1}=p_{-1}=0.1$, computations yield $\max\{E_{i,j}: (i,j) \in \{1,2\}^{2}\} \approx 61.853$, and in each of these cases, simulations reveal that $F$ has multiple fixed points in $[0,1]^{4}$, leading to strictly positive probabilities of draw. But for $\lambda=5$ and $(p_{1},p_{-1})=(0.3,0.1)$, computations yield $\max\{E_{i,j}: (i,j) \in \{1,2\}^{2}\} =4.345$, which, even though it exceeds $1$, is much closer to $1$ compared to the previous two cases, and in this case, simulations reveal that $F$ has a unique fixed point in $[0,1]^{4}$.

\begin{table}[]
\begin{tabular}{|l|l|l|l|l|l|l|l|l|l|l|l|l|l|l|l|} \hline
$\lambda$ & $p_1$ & $p_{-1}$ & $p_0$  & $E_{11}$ & $E_{12}$ & $E_{21}$ & $E_{22}$ \\ \hline 
5 &	0.34 &	0.34 &	0.32 &	0.818 &	0.861 &	0.818 &	0.86 \\
5         & 0.35   & 0.35      & 0.3      & 0.68 &	0.773 &	0.749 &	0.851 \\
5 &	0.375 &	0.375 &	0.25 &	0.414 &	0.579 &	0.594 &	0.827\\
\hline
7 &	0.34 &	0.34 &	0.32 &	0.93 &	0.981 &	0.894 &	0.943 \\
7         & 0.35  & 0.35     & 0.3      & 0.745 &	0.854 &	0.79 &	0.905 \\ 
7 &	0.375 &	0.375 &	0.25 &	0.416 &	0.592 &	0.576 &	0.818 \\
\hline 
10 &	0.34 &	0.34 &	0.32 &	0.628 &	0.663 &	0.615 &	0.649 \\
10        & 0.35  & 0.35     & 0.3       & 0.484 &	0.556 &	0.524 &	0.601 \\
10 &	0.375 &	0.375 &	0.25 &	0.247 &	0.353 &	0.348 &	0.497 \\
\hline
15        & 0.3  & 0.3     & 0.4     & 0.63 &	0.484 & 0.452 & 0.347 \\
15 &  0.325 &  0.325 &  0.35 &0.302 &0.282 &0.271 &0.253 \\
15        & 0.35  & 0.35     & 0.3     & 0.142 &	0.164 &	0.162 &	0.186 \\ \hline
20 &	0.3&	0.3&	0.4&	0.196&	0.151&	0.145&	0.111 \\
20 & 0.325 & 0.325 & 0.35 &0.086 &0.081 &0.079 &0.074\\
20 &	0.35 &	0.35 &	0.3	& 0.037 &	0.043 &	0.043 & 0.05 \\ \hline
25 & 0.25 & 0.25 &  0.5 & 0.373 & 0.196 & 0.184 & 0.097 \\
  25 & 0.30 & 0.30 &  0.4 & 0.061 & 0.047 & 0.045 & 0.035 \\
   25 & 0.35 & 0.35 &  0.3 & 0.010 & 0.011 & 0.011 & 0.013 \\ \hline
50 &	0.25 &	0.25 &	0.5 &	0.002 &	0.001 &	0.001 & 0.001 \\
50 &	0.3 &	0.3 &	0.4 &	0 &	0 &	0 &	0 \\ 
50 &	0.35 &	0.35 &	0.3 &	0 &	0 &	0 &	0 \\ \hline 
\end{tabular}
\caption{Here, $\kappa=3$, the offspring distribution of $T$ is Poisson$(\lambda)$, and $E_{i,j} < 1$ for each $(i,j) \in \{1,2\}^{2}$, so that by Theorem~\ref{thm:kappa=3}, we have $d_{i,j}=0$ for each $(i,j) \in \{1,2\}^{2}$.}\label{kappa=3_Poisson_11}
\end{table}


\begin{table}[]
\begin{tabular}{|l|l|l|l|l|l|l|l|l|l|l|l|l|l|l|l|l|} \hline
$\lambda$ & $p_1$ & $p_{-1}$ & $p_0$  & $\max E_{i,j}$ & Number of fixed points \\ \hline 
5         & 0.1  & 0.025     & 0.875       & 17.271 & 2      \\
5         & 0.1  & 0.05     & 0.85       & 16.186 & 2      \\
5         & 0.1   & 0.1      & 0.8      & 14.109
 &  2  \\ 
5         & 0.1  & 0.2     & 0.7      & 10.346
 & 1 \\
5         & 0.1   & 0.3      & 0.6      & 7.155
 & 1  \\
 \hline
 10        & 0.1  & 0.025     & 0.875      & 74.461
 &  2 \\
10        & 0.1  & 0.05     & 0.85      & 70.138
 &  2 \\
10        & 0.1   & 0.1      & 0.8      & 61.853
  & 2 \\ 
10        & 0.1  & 0.2 & 0.7      & 46.733
  & 1    \\ 
10 &	0.1 &	0.3 & 0.6 & 33.576
  & 1 \\
  \hline
5         & 0.025  & 0.1     & 0.875       & 17.455  &    6   \\
5         & 0.05  & 0.1     & 0.85       & 16.381 &    6   \\
5         & 0.09   & 0.1      & 0.81      & 14.580
 & 2   \\ 
5         & 0.2  & 0.1     & 0.7      & 9.001
& 1 \\
5         & 0.3   & 0.1      & 0.6      & 4.345
 & 1  \\
 \hline
10        & 0.025  & 0.1     & 0.875      &  77.057
&  6 \\
10        & 0.05  & 0.1     & 0.85      & 72.454
&  6 \\
10        & 0.09   & 0.1      & 0.81      & 64.288
& 2 \\ 
10        & 0.2  & 0.1 & 0.7      & 26.706
  & 1    \\ 
10 &	0.3 &	0.1 & 0.6 & 4.566
  & 1 \\
  \hline 
\end{tabular}
\caption{Here, $\kappa=3$, the offspring distribution of $T$ is Poisson$(\lambda)$, and at least one $E_{i,j}$ exceeds $1$, so that Theorem~\ref{thm:kappa=3} is not applicable. Further simulations reveal whether the function $F$, defined in \S\ref{sec:kappa=3_proofs}, has multiple fixed points in $[0,1]^{4}$ or not.}\label{kappa=3_Poisson_13}
\end{table}


We consider a rooted $d$-regular tree, i.e.\ a tree in which each vertex, including the root $\phi$, has precisely $d$ children (in other words, the offspring distribution is $\delta_{d}$, the Dirac delta measure that assigns probability $1$ to $d$), in Table~\ref{regular_kappa=3_11}. For each value of $d$ and the tuple $(p_{-1},p_{0},p_{1})$ listed in this table, computations yield $E_{i,j} < 1$ for each $(i,j) \in \{1,2\}^{2}$, thus letting us conclude, via Theorem~\ref{thm:kappa=3}, that the probability $d_{i,j}$ of draw in the game $\mathcal{G}(i,j,3,T_{d})[\phi]$ equals $0$ for each $(i,j) \in \{1,2\}^{2}$. For instance, when $d=15$ and $p_{-1}=p_{1}=0.35$, we have
\begin{align}
    {}&A_{1,1}=A_{1,2}=G(p_{-1})=(0.35)^{15} \approx 1.4488 \times 10^{-7},\nonumber\\
    {}&B_{2,1}=B_{2,2}=G(1-p_{1})=(1-0.35)^{15} \approx 0.0015620,\nonumber
\end{align}
and
\begin{align}
A_{2,1}={}&G\left[p_{-1}\left\{1-G\left[1-\left(1-p_{-1}\right)G\left(p_{-1}\right)\right]\right\}\right]=\left[0.35\left\{1-\left[1-(1-0.35)(0.35)^{15}\right]^{15}\right\}\right]^{15} \approx 2.57866 \times 10^{-95},\nonumber\\
A_{2,2}={}&G\left[p_{-1}\left\{1-G\left\{1-G\left(p_{-1}\right)\right\}\right\}\right]+G\left[\left(1-p_{1}\right)\left\{1-G\left(1-p_{1}\right)\right\}\right]\nonumber\\&-G\left[p_{-1}\left[1-G\left(1-p_{1}\right)-G\left[1-G\left(p_{-1}\right)\right]+G\left[\left(1-p_{1}\right)\left\{1-G\left(p_{-1}\right)\right\}\right]\right]\right],\nonumber\\
={}&\left[0.35\left\{1-\left\{1-(0.35)^{15}\right\}^{15}\right\}\right]^{15}+\left[(1-0.35)\left\{1-(1-0.35)^{15}\right\}\right]^{15}\nonumber\\&-\left[0.35\left[1-(1-0.35)^{15}-\left[1-(0.35)^{15}\right]^{15}+\left[(1-0.35)\left\{1-(0.35)^{15}\right\}\right]^{15}\right]\right]^{15} \approx 0.0015258,\nonumber\\
B_{1,1}={}&G\left[1-\left(1-p_{-1}\right)G\left(p_{-1}\right)\right]-G\left[\left(1-p_{-1}\right)\left\{1-G\left(p_{-1}\right)\right\}+p_{-1}\right]+G[\left(1-p_{-1}\right)\left\{1-G\left(p_{-1}\right)\right\}\nonumber\\&+p_{-1}-p_{1}G\left\{1-G\left(1-p_{1}\right)\right\}+p_{1}G\left[p_{-1}\left\{1-G\left(1-p_{1}\right)\right\}\right]]\nonumber\\
={}&\left[1-(1-0.35)(0.35)^{15}\right]^{15}-\left[(1-0.35)\left\{1-(0.35)^{15}\right\}+0.35\right]^{15}+\Big[(1-0.35)\left\{1-(0.35)^{15}\right\}\nonumber\\&+0.35-0.35\left\{1-(1-0.35)^{15}\right\}^{15}+0.35\left[0.35\left\{1-(1-0.35)^{15}\right\}\right]^{15}\Big]^{15} \approx 0.00188145,\nonumber\\
B_{1,2}={}&G\left[1-p_{1}G\left[\left(1-p_{1}\right)\left\{1-G\left(1-p_{1}\right)\right\}\right]\right]=\left[1-0.35\left[(1-0.35)\left\{1-(1-0.35)^{15}\right\}\right]^{15}\right]^{15} \approx 0.992019.\nonumber
\end{align}
We now plug in the values computed above into the expression fo $E_{i,j}$ defined in \eqref{E_{i,j}_defn}, for each $(i,j) \in \{1,2\}^{2}$, and find, as stated in Table~\ref{regular_kappa=3_11}, that $E_{i,j} < 1$ for each $(i,j) \in \{1,2\}^{2}$, thus allowing us to conclude, via Theorem~\ref{thm:kappa=3}, that the probability $d_{i,j}$ of the game $\mathcal{G}(i,j,3,T_{d})[\phi]$ resulting in a draw equals $0$ when $d=15$ and $p_{-1}=p_{1}=0.35$.

On the other hand, in Table~\ref{regular_kappa=3_13}, we list some values of $d$, $p_{-1}$ and $p_{1}$ (of course, $p_{0}=1-p_{-1}-p_{1}$ is then already determined) for which at least one $E_{i,j}$ is strictly greater than $1$, where $(i,j) \in \{1,2\}^{2}$. Consequently, Theorem~\ref{thm:kappa=3} cannot be applied, since, as also mentioned earlier, the criterion stated in Theorem~\ref{thm:kappa=3} is a \emph{sufficient} one for guaranteeing that the function $F$, defined via \eqref{ell_{1,1}}, \eqref{ell_{1,2}}, \eqref{ell_{2,1}} and \eqref{ell_{2,2}} in \S\ref{sec:kappa=3_proofs}, has a unique fixed point in $[0,1]^{4}$, but not \emph{necessary}. Therefore, we need to employ numerical analysis to understand better the properties of the function $F$, and the rightmost column states whether our numerical analysis of $F$ has indicated that it has multiple fixed points in $[0,1]^{4}$ or not (and it also states the number of fixed points found via numerical analysis when there are more than one of them).  

As discussed with regard to Table~\ref{kappa=3_Poisson_13}, while Theorem~\ref{thm:kappa=3} is not applicable to the situations listed in Table~\ref{regular_kappa=3_13}, a pattern can still be noted from the findings here. For instance, while $\max\{E_{i,j}: (i,j) \in \{1,2\}^{2}\}$ is strictly greater than $1$ in each of the rows in this table, in some cases, this value is ``significantly" higher than $1$ (as in the case of Table~\ref{kappa=3_Poisson_13}, what we mean by ``significant" here depends on the value of $d$ under consideration), and the corresponding function $F$ turns out to have multiple fixed points in $[0,1]^{4}$. In some other cases, $\max\{E_{i,j}: (i,j) \in \{1,2\}^{2}\}$ exceeds $1$ but not by a large margin, and there, numerical analysis indicates that $F$ has a unique fixed point in $[0,1]^{4}$. For instance, when $d=5$ and $p_{-1}=p_{1}=0.1$, we have $\max\{E_{i,j}: (i,j) \in \{1,2\}^{2}\} \approx 16.09$, and numerical investigation reveals that $F$ has $4$ distinct fixed points in $[0,1]^{4}$, whereas when $d=10$ and $(p_{1},p_{-1})=(0.3,0.1)$, we have $\max\{E_{i,j}: (i,j) \in \{1,2\}^{2}\} \approx 4.988$ -- a value much closer to $1$ than that in the previous case -- and numerical investigations indicate that $F$ has a unique fixed point in $[0,1]^{4}$.   

Table~\ref{kappa=3_Poisson_13} and Table~\ref{regular_kappa=3_13} hint at another interesting pattern in the probability of draw: fixing the value of $p_1$ (respectively, $p_{-1}$) and increasing the value of $p_{-1}$ (respectively, $p_1$) (and therefore, decreasing the value of $p_0$), leads to a decrease in the value of $\max E_{ij}$ and, in turn, in the number of fixed points of $F$ in $[0,1]^{4}$. As alluded to earlier, simulations seem to indicate that as we keep decreasing the value of $\max E_{ij}$ for fixed values of the parameter(s) underlying the offspring distribution under consideration, eventually, the function $F$ ends up having a unique fixed point in $[0,1]^{4}$. The patterns observed in Table~\ref{kappa=3_Poisson_13} and Table~\ref{regular_kappa=3_13} may even indicate that the probability of draw decreases monotonically as we increase $p_{1}$ (respectively, $p_{-1}$) while keeping $p_{-1}$ (respectively, $p_{1})$ fixed (and consequently, letting $p_{0}$ decrease). A different, and possibly far more direct, intuition for such a surmise is applicable at least for $\kappa=2$, as follows. Via a standard coupling argument, increasing $p_{1}$ to $p'_{1}$ while keeping $p_{-1}$ fixed (so that $p_{0}$ decreases to $p'_{0}=(1-p'_{1}-p_{-1})$) amounts to
\begin{enumerate}
    \item first assigning, independently, to each edge of our rooted Galton-Watson tree, an edge-weight that equals $+1$ with probability $p_{1}$, $-1$ with probability $p_{-1}$, and $0$ with probability $p_{0}=1-p_{-1}-p_{1})$,
    \item then changing the weight of each edge that previously bore the edge-weight $0$, independent of all else, into $+1$ with probability $(p'_{1}-p_{1})p_{0}^{-1}$. 
\end{enumerate}
All other edge-weights are kept unchanged. It is evident from this coupling that once the above-mentioned reassignment of weights has been performed, the number of edges with edge-weight equal to $0$ decreases. As our game continues for as long as the token keeps being moved along edges that bear edge-weight $0$, it seems intuitive that such a reassignment of edge-weights would decrease the chances of the game resulting in a draw. As mentioned earlier, this intuition is even stronger when $\kappa=2$, since, in this case, the game ends the moment an edge bearing edge-weight $\in \{-1,+1\}$ is traversed by the token. However, it is hard to find the right coupling to prove such a conjecture due to the adversarial nature of the game. 

\begin{table}[]
\begin{tabular}{|l|l|l|l|l|l|l|l|} \hline
$d$ & $p_1$ & $p_{-1}$ & $p_0$ & $E_{11}$ & $E_{12}$ & $E_{21}$ & $E_{22}$ \\ \hline
5 & 0.340 & 0.340 & 0.32 & 0.901 & 0.950 & 0.884 & 0.932 \\
5 &	0.35 &	0.35 &	0.3 &	0.732 &	0.837 &	0.789 &	0.904 \\ 
5 & 0.375 & 0.375 & 0.25 & 0.419 & 0.596 & 0.586 & 0.834 \\
\hline
7 & 0.340 & 0.340 & 0.32 & 0.755 & 0.798 & 0.744 & 0.786 \\
7 &	0.35 &	0.35 &	0.3 &	0.588 &	0.676 &	0.638 &	0.734 \\
7 & 0.375 & 0.375 & 0.25 & 0.304 & 0.438 & 0.429 & 0.617 \\
\hline
10 & 0.340 & 0.340 & 0.32 & 0.371 & 0.392 & 0.378 & 0.400 \\
10 &	0.35 &	0.35 &	0.3 &	0.275 &	0.317 &	0.309 &	0.356 \\
10 & 0.375 & 0.375 & 0.25 & 0.126 & 0.182 & 0.184 & 0.266 \\
\hline
15 &	0.3	& 0.3 &	0.4 &	0.22&	0.46&	0.237&	0.181 \\
15 & 0.325 & 0.325 & 0.35 & 0.137 & 0.128 & 0.126 & 0.117\\
15 &	0.35&	0.35&	0.3&	0.056&	0.065&	0.065&	0.075 \\
\hline
20	&0.25&	0.25&	0.5	&0.551&	0.287&	0.272&	0.142 \\
20 & 0.325 & 0.325 & 0.35 & 0.030 & 0.028 & 0.028 & 0.026\\
20 &	0.3 &	0.3 & 	0.4 &	0.081	&0.062 &	0.061 &	0.046 \\
20 &	0.35&	0.35 &	0.3 &	0.011 &	0.012 &	0.012 &	0.014 \\ \hline
25 &	0.25 &	0.25 &	0.5 &	0.176 &	0.091 &	0.087 &	0.045 \\
25	& 0.3 &	0.3 &	0.4 &	0.02 &	0.015 &	0.015 &	0.011 \\
25 &	0.35 &	0.35 &	0.3 &	0.002 &	0.002 &	0.002 &	0.003 \\ \hline
30 & 0.25 & 0.25 &  0.5 & 0.056 & 0.029 & 0.028 & 0.015 \\
 30 & 0.30 & 0.30 & 0.4 & 0.005 & 0.004 & 0.004 & 0.003 \\
30 & 0.35 & 0.35 &  0.3  & 0.000 &  0.000 &  0.000 &  0.000 \\
\hline
\end{tabular}
\caption{Here, $\kappa=3$ and the rooted tree under consideration is $T_{d}$, and for each tuple $(d, p_{1},p_{-1}, p_{0})$ listed in this table, we have $E_{i,j} < 1$ for each $(i,j) \in \{1,2\}^{2}$, so that by Theorem~\ref{thm:kappa=3}, we conclude that $d_{i,j}=0$ for each $(i,j) \in \{1,2\}^{2}$ in each of these cases.}\label{regular_kappa=3_11}
\end{table}

\begin{table}[]
\begin{tabular}{|l|l|l|l|l|l|l|l|l|l|l|l|l|l|l|l|l|} \hline
$d$ & $p_1$ & $p_{-1}$ & $p_0$  & $\max E_{i,j}$ & Number of fixed points \\ \hline 
5         & 0.1  & 0.05     & 0.85       & 18.07  & 6      \\
5         & 0.1   & 0.1      & 0.8      & 16.09
 &  4  \\ 
5         & 0.1  & 0.2     & 0.7      & 12.438 
 & 2 \\
5         & 0.1   & 0.3      & 0.6      &  9.158
 & 2  \\
 5         & 0.1  & 0.35     & 0.55      & 7.65
  & 1    \\ 
 \hline
10        & 0.1  & 0.05     & 0.85      & 71.242
 &  4 \\
10        & 0.1   & 0.1      & 0.8      & 62.957
  & 2 \\ 
10        & 0.1  & 0.2 & 0.7      & 47.836
  & 2    \\ 
10 &	0.1 &	0.3 & 0.6 & 34.696
  & 2 \\
 10 &	0.1	& 0.35 & 0.55 & 28.888
   & 2 \\ 
  \hline
   5         & 0.025  & 0.1     & 0.875       & 19.524   &  6     \\
  5         & 0.05  & 0.1     & 0.85       & 18.381  &    6   \\
5         & 0.09   & 0.1      & 0.81      &  16.558
 &   6 \\ 
5         & 0.2  & 0.1     & 0.7      & 10.73
& 1 \\
5         & 0.3   & 0.1      & 0.6      & 4.988
 & 1  \\
 \hline
 10        & 0.025  & 0.1     & 0.875      &  77.242
& 6   \\
10        & 0.05  & 0.1     & 0.85      & 72.745
&  6 \\
10        & 0.09   & 0.1      & 0.81      &  65.218
& 4 \\ 
10        & 0.2  & 0.1 & 0.7      & 24.927
  & 1    \\ 
10 &	0.3 &	0.1 & 0.6 & 2.95
  & 1 \\
  \hline 
\end{tabular}
\caption{For each tuple $(d,p_{-1},p_{0},p_{1})$ listed in this table, we find that at least one $E_{i,j}$, for $(i,j) \in \{1,2\}^{2}$, exceeds $1$, forcing us to investigate the function $F$ numerically. The last column states whether numerical analysis indicates the function $F$ has multiple fixed points in $[0,1]^{4}$ or not.}\label{regular_kappa=3_13}
\end{table}

\section{Formal Proofs}\label{sec:formal_proofs}
\subsection{Notations and preliminary observations}\label{subsec:notations_definitions}
We analyze the weighted-tree-percolation game for a fixed (but arbitrary) $\kappa$, which allows us to drop the parameter $\kappa$ from the notation $\mathcal{G}(i,j,\kappa,t)[v^*]$. Furthermore, when the initial vertex $v^*$ is clear from the context (such as when $v^{*}$ is the root $\phi$ (or $\phi(t)$) of $T$ (or $t$)), we omit $v^{*}$ from our notation as well. Thus, in such situations, our notation gets shortened to $\mathcal{G}(i,j,t)$ (or $\mathcal{G}(i,j,T)$, when the game is considered on the rooted edge-weighted GW tree $T$). Finally, when it is clear which random premise, i.e.\ $T$, the game is being played on (this entails the knowledge of the offspring distribution $\chi$ and the values of the edge-weight probabilities as defined in \eqref{edge_weight_prob}, i.e.\ $p_{-1}$ and $p_{0}$), we drop $T$ from our notation and are left with simply $\mathcal{G}(i,j)$. 

Given any (deterministic) rooted edge-weighted tree $t$, and $i,j \in \{1,2,\ldots,\kappa-1\}$, we partition the vertex set $V(t)$ of $t$ into subsets -- introducing these subsets will prove helpful in formulating the recurrence relations arising from our game, and these, in turn, are indispensable in the analysis of our game. 
\begin{enumerate}
\item We let $W_{i,j}(t)$ denote the set of all vertices $u \in V(t)$ such that $P_{1}$ wins the game $\mathcal{G}(i,j,t)[u]$.
\item We let $L_{i,j}(t)$ denote the set of all vertices $u \in V(t)$ such that $P_{1}$ loses the game $\mathcal{G}(i,j, t)[u]$.
\item We let $D_{i,j}(t)$ denote the set of all vertices $u \in V(t)$ such that $\mathcal{G}(i,j, t)[u]$ culminates in a draw. 
\end{enumerate}
Evidently, $W_{i,j}(t)$, $L_{i,j}(t)$ and $D_{i,j}(t)$ are pairwise disjoint and their union yields $V(t)$. Next, for $i,j \in \{1,2,\ldots,\kappa-1\}$ and for each $n \in \mathbb{N}$,  
\begin{enumerate}
\item we let $W_{i,j}^{(n)}(t) \subseteq W_{i,j}(t)$ denote the set of all $u \in V(t)$ such that the game $\mathcal{G}(i,j,t)[u]$ ends in less than $n$ rounds, 
\item we let $L_{i,j}^{(n)}(t) \subseteq L_{i,j}(t)$ denote the set of all $u \in V(t)$ such that the game $\mathcal{G}(i,j, t)[u]$ ends in less than $n$ rounds, 
\item we let $D_{i,j}^{(n)}(t)$ denote the set of all $u \in V(t)$ such that $\mathcal{G}(i,j, t)[u]$ lasts for at least $n$ rounds.
\end{enumerate}
We set $W_{i,j}^{(0)}(t) = L_{i,j}^{(0)}(t) = \emptyset$. Once again, $W_{i,j}^{(n)}(t)$, $L_{i,j}^{(n)}(t)$ and $D_{i,j}^{(n)}(t)$ form a partition of $V(t)$, for each $n$. We further note, following our remarks towards the end of \S\ref{subsec:deterministic_game}, that $W_{\kappa,j}^{(0)}(t) = W_{\kappa,j}(t) = V(t)$ for each $j \in \{0,1,\ldots,\kappa-1\}$, and $W_{i,0}^{(0)}(t)  = W_{i,0}(t) = V(t)$ for each $i \in \{1,2,\ldots,\kappa\}$. Likewise, $L_{i,\kappa}^{(0)}(t) = L_{i,\kappa}(t) = V(t)$ for each $i \in \{0,1,\ldots,\kappa-1\}$, and $L_{0,j}^{(0)}(t) = L_{0,j}(t) = V(t)$ for each $j \in \{1,2,\ldots,\kappa\}$.

In an analogous manner, we define the corresponding \emph{random} subsets of vertices, namely $W_{i,j}(T)$, $L_{i,j}(T)$, $D_{i,j}(T)$, $W_{i,j}^{(n)}(T)$, $L_{i,j}^{(n)}(T)$ and $D_{i,j}^{(n)}(T)$, of the rooted, edge-weight GW tree $T$, for all $i,j \in \{1,2,\ldots,\kappa-1\}$ and all $n \in \mathbb{N}_{0}$. 

Having defined these subsets, it is now time to define the corresponding probabilities, as these will constitute the recursive distributional equations derived from the recurrence relations involving the above-mentioned subsets. We let $w_{i,j}$ denote the probability of the event that the root $\phi$ of $T$ belongs to $W_{i,j}(T)$, and $w_{i,j}^{(n)}$ the probability of the event that $\phi$ is in $W_{i,j}^{(n)}(T)$, for each $n \in \mathbb{N}_{0}$. Formally, 
\begin{equation}
w_{i,j} = \Prob\left[\left\{t : \phi(t) \in W_{i,j}(t)\right\}\right] \text{ and } w^{(n)}_{i,j} = \Prob\left[\left\{t : \phi(t) \in W^{(n)}_{i,j}(t)\right\}\right],\label{win_probab_defn}
\end{equation}
where $\Prob$ indicates the probability measure induced by the rooted, edge-weighted GW tree $T$ on the space of all rooted, edge-weighted, locally finite trees with each edge-weight belonging to the set $\{-1,0,1\}$. Likewise, we define the probabilities $\ell_{i,j}$, $\ell_{i,j}^{(n)}$, $d_{i,j}$ and $d_{i,j}^{(n)}$, for each $n \in \mathbb{N}_{0}$. It is worthwhile to note here that the probabilities $w_{i,j}$ and $\ell_{i,j}$, for $i,j \in \{1,2,\ldots,\kappa-1\}$, are the same as those described in the statement of Theorem~\ref{thm:1}. Furthermore, from the discussion above, it is clear that $w_{\kappa,j}^{(0)}=w_{\kappa,j}=1$ for each $j \in \{0,1,\ldots,\kappa-1\}$, and $w_{i,0}^{(0)}=w_{i,0}=1$ for each $i \in \{1,2,\ldots,\kappa\}$. Likewise, $\ell_{i,\kappa}^{(0)}=\ell_{i,\kappa}=1$ for each $i \in \{0,1,\ldots,\kappa-1\}$ and $\ell_{0,j}^{(0)}=\ell_{0,j}=1$ for each $j \in \{1,2,\ldots,\kappa\}$.

We now state a lemma whose proof is straightforward and hence has been omitted, but whose usefulness is evident in the derivation of the recurrence relations and the subsequent recursive distributional equations in \S\ref{subsec:recurrence}. 
\begin{lemma}\label{lem:containment}
For any rooted, edge-weighted tree $t$ and any $n \in \mathbb{N}$, we have $L_{i+1,j}^{(n)}(t) \subseteq L_{i,j}^{(n)}(t) \subseteq L_{i,j+1}^{(n)}(t)$, whereas $W_{i,j+1}^{(n)}(t) \subseteq W_{i,j}^{(n)}(t) \subseteq W_{i+1,j}^{(n)}(t)$. Likewise, we have $L_{i+1,j}(t) \subseteq L_{i,j}(t) \subseteq L_{i,j+1}(t)$ and $W_{i,j+1}(t) \subseteq W_{i,j}(t) \subseteq W_{i+1,j}(t)$.
\end{lemma}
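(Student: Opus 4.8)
The plan is to prove all four monotonicities simultaneously for the finite-horizon sets $W_{i,j}^{(n)}(t)$ and $L_{i,j}^{(n)}(t)$ by induction on $n$, and then pass to $W_{i,j}(t)$ and $L_{i,j}(t)$ by taking unions over $n$. The reason a \emph{simultaneous} induction is forced upon us is the \textbf{index swap} built into the game's one-step recurrence: after $P_{1}$ relocates the token from $u$ to a child $v$ along an edge of weight $\omega_t(u,v)$, the continuation is the game in which $P_{2}$ moves first with capital $j$ against an opponent holding $i+\omega_t(u,v)$. Writing the recurrences in compact form (these are derived in \S\ref{subsec:recurrence}, and I would use the terminal conventions $W_{\kappa,\cdot}=W_{\cdot,0}=V(t)$, $L_{0,\cdot}=L_{\cdot,\kappa}=V(t)$ together with the complementary empty sets), one has that $u \in W_{i,j}^{(n)}(t)$ exactly when some child $v$ satisfies $v \in L_{j,\,i+\omega_t(u,v)}^{(n-1)}(t)$, and $u \in L_{i,j}^{(n)}(t)$ exactly when $u$ is a leaf or every child $v$ satisfies $v \in W_{j,\,i+\omega_t(u,v)}^{(n-1)}(t)$. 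The crucial observation is that $P_{1}$'s capital enters as the \emph{second} index of the continuation set while $P_{2}$'s capital enters as the \emph{first}.

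First I would isolate the four assertions to be run in parallel: $W^{(n)}$ is nondecreasing in its first index and nonincreasing in its second, while $L^{(n)}$ is nonincreasing in its first index and nondecreasing in its second. The base case $n=0$ is immediate, since the interior sets are empty and the boundary sets coincide with the claimed monotonicity at the corners. For the inductive step I would simply substitute into the one-step recurrence. For instance, to obtain $W_{i,j}^{(n)}(t) \subseteq W_{i+1,j}^{(n)}(t)$, a witnessing child $v$ with $v \in L_{j,\,i+\omega}^{(n-1)}(t)$ continues to witness membership after $i$ is increased, because the relevant continuation set becomes $L_{j,\,(i+\omega)+1}^{(n-1)}(t)$, which contains the former by the \emph{second-index} monotonicity of $L$ at horizon $n-1$. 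The three remaining inclusions follow the identical template, and the resulting dependency pattern is exactly what closes the induction: monotonicity of $W$ in its first index is fed by monotonicity of $L$ in its second, of $W$ in its second by $L$ in its first, of $L$ in its first by $W$ in its second, and of $L$ in its second by $W$ in its first, each invoked at horizon $n-1$.

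Finally, since a win (respectively, a loss) necessarily occurs in finitely many rounds, I would record that $W_{i,j}(t)=\bigcup_{n}W_{i,j}^{(n)}(t)$ and $L_{i,j}(t)=\bigcup_{n}L_{i,j}^{(n)}(t)$ are increasing unions, and that set inclusions are preserved under such unions; this immediately lifts all four monotonicities from the superscripted sets to the unsuperscripted ones, completing the proof.

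I anticipate that the only delicate point — and the step I would treat most carefully — is the bookkeeping at the boundary: one must check that whenever an index of the form $i+\omega$, $i+1$, or $j+1$ reaches $0$ or $\kappa$, the terminal conventions render the corresponding inclusion either trivial (with $\emptyset$ on the left or $V(t)$ on the right) or fully consistent with the interior recurrence. Once these conventions are pinned down, each inductive step reduces to a one-line substitution, which is precisely why the lemma's proof can be omitted from the main text.
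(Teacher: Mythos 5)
The paper offers no proof of this lemma to compare against --- it is introduced as ``a lemma whose proof is straightforward and hence has been omitted'' --- so I will simply assess your argument: it is correct, and it is almost certainly the intended one. The simultaneous induction closes exactly as you describe once the one-step characterizations are written in the primitive form ($u \in W_{i,j}^{(n)}(t)$ iff some child $v$ lies in $L_{j,\,i+\omega_t(u,v)}^{(n-1)}(t)$; $u \in L_{i,j}^{(n)}(t)$ iff $u$ is a leaf or every child $v$ lies in $W_{j,\,i+\omega_t(u,v)}^{(n-1)}(t)$), and the boundary conventions $W_{\cdot,0}=L_{0,\cdot}=L_{\cdot,\kappa}=V(t)$ and $W_{\cdot,\kappa}=L_{\cdot,0}=\emptyset$ render every inclusion involving an extreme index trivial; your cyclic dependency pattern (each monotonicity of $W^{(n)}$ fed by the transposed monotonicity of $L^{(n-1)}$ and vice versa) is exactly right. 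Two points deserve emphasis. First, you were right to use the primitive recurrences rather than the case-decomposed versions displayed in \S\ref{subsec:recurrence}: those decompositions are equivalent to the primitive ones only \emph{because} of the present lemma (the paper explicitly invokes it there), so quoting them would be circular, whereas your form follows directly from the rules of optimal play. Second, your lift from the superscripted to the unsuperscripted sets uses $W_{i,j}(t)=\bigcup_{n}W_{i,j}^{(n)}(t)$ and $L_{i,j}(t)=\bigcup_{n}L_{i,j}^{(n)}(t)$; this is not automatic (a player destined to win need not, a priori, be able to force a win within any fixed horizon) and is precisely the content of Lemma~\ref{lem:compact}. That lemma is stated after this one, but its proof does not rely on Lemma~\ref{lem:containment}, so there is no circularity; still, you should either cite it explicitly or replace that step by the equally short coupling argument showing that any strategy for $P_{1}$ with capital $i$ achieves at least as good an outcome, no later, when her capital is $i+1$ (and symmetrically in $j$).
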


A less obvious observation is that, in a game that does \emph{not} result in a draw, the player who is destined to win can guarantee to do so in a \emph{finite} number of rounds. Mathematically, this can be stated as follows:
\begin{lemma}\label{lem:compact}
For $i, j \in \{1,2,\ldots,\kappa-1\}$, let us set $W'_{i,j}(T) = W_{i,j}(T) \setminus \bigcup_{n=1}^{\infty}W_{i,j}^{(n)}(T)$ and $L'_{i,j}(T) = L_{i,j}(T) \setminus \bigcup_{n=1}^{\infty}L_{i,j}^{(n)}(T)$. Then $W'_{i,j}(T) = L'_{i,j}(T) = \emptyset$.
\end{lemma}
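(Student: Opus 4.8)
The plan is to establish the stronger, purely combinatorial statement valid for \emph{every} realization of $T$: if $P_{1}$ wins $\mathcal{G}(i,j,T)[u]$, then $P_{1}$ can force that win within a \emph{finite} (though realization-dependent) number of rounds, and symmetrically for losses. This immediately yields $u \in \bigcup_{n=1}^{\infty}W_{i,j}^{(n)}(T)$ whenever $u \in W_{i,j}(T)$, hence $W'_{i,j}(T)=\emptyset$, and likewise $L'_{i,j}(T)=\emptyset$. The two ingredients I would lean on are that $T$ is locally finite --- each vertex has finitely many children, since the number of offspring of any vertex is an $\mathbb{N}_{0}$-valued random variable and therefore finite in \emph{every} realization, not merely almost surely --- and K\"onig's lemma.

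Concretely, I would fix $u \in W_{i,j}(T)$. By the definition of $W_{i,j}(T)$ as the set of initial vertices from which $P_{1}$ wins the game (i.e.\ from which the outcome under optimal play is a win for $P_{1}$, the perfect-information game being determined), $P_{1}$ possesses a strategy $\tau$ guaranteeing a win against \emph{every} strategy of $P_{2}$. I then form the tree $\mathcal{P}_{\tau}$ whose nodes are the finite partial plays of $\mathcal{G}(i,j,T)[u]$ consistent with $\tau$ --- that is, $P_{1}$ always moves according to $\tau$ while $P_{2}$ moves arbitrarily --- a play being declared terminal, and hence a leaf of $\mathcal{P}_{\tau}$, as soon as one of the three winning conditions is met. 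Two properties of $\mathcal{P}_{\tau}$ then require verification. First, $\mathcal{P}_{\tau}$ is finitely branching: at a node where it is $P_{1}$'s turn the continuation is unique (dictated by $\tau$), whereas at a node where it is $P_{2}$'s turn the number of continuations is the number of children of the current vertex, which is finite by local finiteness. Second, $\mathcal{P}_{\tau}$ has no infinite branch: an infinite branch would be an infinite play in which none of the three winning conditions is ever realized, i.e.\ a draw, contradicting that $\tau$ is a winning strategy for $P_{1}$.

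With these two properties in hand, K\"onig's lemma forces $\mathcal{P}_{\tau}$ to be finite, so some finite integer $N$ bounds the length of every branch. Consequently $\mathcal{G}(i,j,T)[u]$ ends, under this strategy of $P_{1}$, in fewer than $N+1$ rounds no matter how $P_{2}$ responds; recalling the convention that the winner hastens and the loser prolongs, the optimal-play duration can only be smaller, so $u \in W_{i,j}^{(N+1)}(T) \subseteq \bigcup_{n=1}^{\infty}W_{i,j}^{(n)}(T)$. Since $u \in W_{i,j}(T)$ was arbitrary, $W'_{i,j}(T)=\emptyset$. The identity $L'_{i,j}(T)=\emptyset$ follows verbatim upon fixing $u \in L_{i,j}(T)$, taking a winning strategy $\sigma$ for $P_{2}$, and building the analogous play tree in which $P_{2}$ follows $\sigma$ and $P_{1}$ moves arbitrarily, the finite branching now coming from $P_{1}$'s finitely many legal moves.

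The step I expect to be the main obstacle is not the combinatorial machinery but the careful verification of the ``no infinite branch'' property: one must confirm that a genuinely winning strategy cannot permit an infinite line of play. This reduces to checking that each of the three ways of winning terminates the game at a finite round and, in particular, that $P_{1}$ following $\tau$ is never the player forced to move the token onto a leaf (an event constituting a \emph{loss} for her, and hence precluded by $\tau$ being winning). A secondary point worth recording explicitly is the passage from ``$u \in W_{i,j}(T)$'' to ``$P_{1}$ has a winning strategy'', which is exactly the determinacy implicit in the definition of the outcome of this perfect-information game.
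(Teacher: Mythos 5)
Your proof is correct, but it takes a genuinely different route from the paper's. The paper follows Proposition 7 of \cite{holroyd2021galton}: it supposes $u_{0} \in W'_{i,j}(T)$ and unfolds the recursive characterisations of the sets $W'$ and $L'$ --- a vertex of $W'_{i,j}(T)$ has no child lying in any $L^{(n)}_{j,\cdot}(T)$ of the appropriate type (else $P_{1}$ would win in finitely many rounds), yet has a child in the corresponding $L'_{j,\cdot}(T)$, and dually for $L'$ --- so that optimal play threads forever through $W'$- and $L'$-vertices, producing an infinite play, i.e.\ a draw, contradicting $u_{0} \in W_{i,j}(T)$. You instead fix a winning strategy $\tau$ for $P_{1}$, form the tree of partial plays consistent with $\tau$, observe that it is finitely branching (local finiteness of $T$) and has no infinite branch (an unterminated infinite play is a draw, which $\tau$ forbids), and invoke K\"onig's lemma to extract a uniform finite bound $N$ on the duration. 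Both arguments are compactness arguments resting on the same two pillars --- local finiteness and the fact that an infinite play is a draw --- but yours is packaged globally around a single strategy tree, while the paper's is packaged locally around the recursions for $W^{(n)}_{i,j}$ and $L^{(n)}_{i,j}$ that it then reuses in \S\ref{subsec:recurrence}. Your version is more self-contained and makes explicit where determinacy of the (open) winning condition enters; the paper's stays entirely inside the combinatorial framework it has already set up. Two small points to tidy: the parenthetical about leaves should say that $P_{1}$ loses when she is \emph{stranded at} a leaf on her turn (moving the token \emph{onto} a leaf is a win for the mover, per condition (iii) of the game); and the passage from ``the game under $\tau$ ends within $N$ rounds'' to ``$u \in W^{(N+1)}_{i,j}(T)$'' does rely on the convention that the destined winner minimises duration, which you correctly record.
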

\begin{remark}\label{rem:1}
Before we outline the proof of Lemma~\ref{lem:compact}, we make the simple observation that $W'_{i,j}(T)$ and $L'_{i,j}(T)$ are trivially empty sets when either of $0$ or $\kappa$ belongs to the set $\{i,j\}$.
\end{remark}
\begin{proof}
The proof of Lemma~\ref{lem:compact} adopts an argument very similar to that of Proposition 7 of \cite{holroyd2021galton}, so we only briefly outline the crux of it here. A vertex $u_{0}$ of $T$ is in $W'_{i,j}(T)$ if and only if
\begin{enumerate}
\item $u_{0}$ has no child $u$ such that 
\begin{enumerate}
\item $u \in L_{j,i-1}^{(n)}(T)$ for some $n \in \mathbb{N}$, 
\item or $\omega_{t}(u_{0},u) \in \{0,1\}$ and $u \in L_{j,i}^{(n)}(T)$ for some $n \in \mathbb{N}$, 
\item or $\omega_{t}(u_{0},u) = 1$ and $u \in L_{j,i+1}^{(n)}(T)$ for some $n \in \mathbb{N}$,
\end{enumerate}
(since, if such a child $u$ exists, $P_{1}$, under our assumption of optimality (recall from \S\ref{subsec:deterministic_game}), moves the token from $u_{0}$ to $u$ in the first round, and wins the game $\mathcal{G}(i,j,\kappa,T)[u_{0}]$ in less than $n+1$ rounds, implying that $u_{0} \in W_{i,j}^{(n+1)}(T)$ and leading to a contradiction),
\item $u_{0}$ has at least one child $u_{1}$ such that 
\begin{enumerate}
\item either $u_{1} \in L'_{j,i-1}(T)$ (in which case $i > 1$ by Remark~\ref{rem:1}), 
\item or $\omega_{t}(u_{0},u_{1}) \in \{0,1\}$ and $u_{1} \in L'_{j,i}(T)$, 
\item or $\omega_{t}(u_{0},u_{1})=1$ and $u_{1} \in L'_{j,i+1}(T)$ (in which case, $i < \kappa-1$ by Remark~\ref{rem:1}).
\end{enumerate}
\end{enumerate}
To win the game, $P_{1}$ moves the token from $u_{0}$ to such a child $u_{1}$ in the first round. 

Next, $u_{1} \in L'_{j,i}(T)$ (an analogous argument works if $u_{1}$ belongs to $L'_{j,i-1}(T)$ or $L'_{j,i+1}(T)$) if and only if 
\begin{enumerate}
\item every child $v$ of $u_{1}$ is 
\begin{enumerate}
\item either in $W_{i,j+1}(T)$, 
\item or in $W_{i,j}(T)$ with $\omega_{t}(u_{1},v) \in \{-1,0\}$, 
\item or in $W_{i,j-1}(T)$ with $\omega_{t}(u_{1},v) = -1$,
\end{enumerate}
\item at least one child $u_{2}$ of $u_{1}$ is such that 
\begin{enumerate}
\item either $u_{2} \in W'_{i,j+1}(T)$ (so that $j < \kappa-1$ by Remark~\ref{rem:1}),  
\item or $u_{2} \in W'_{i,j}(T)$ with $\omega_{t}(u_{1},u_{2}) \in \{-1,0\}$,
\item or $u_{2} \in W'_{i,j-1}(T)$ (so that $j > 1$ by Remark~\ref{rem:1}) with $\omega_{t}(u_{1},u_{2}) = -1$.
\end{enumerate}
\end{enumerate}
$P_{2}$, under our assumption of optimality, moves the token from $u_{1}$ to such a child $u_{2}$ in the second round.

The argument above makes it evident that, the initial vertex $u_{0}$ is in $W'_{i,j}(T)$ only if in \emph{every} round of the game $\mathcal{G}(i,j,\kappa,T)[u_{0}]$, the player whose turn it is to make a move does so without having their entire capital squandered, without managing to accumulate a total capital of $\kappa$, and without getting stranded at a leaf vertex. Therefore, the game continues forever, leading to a draw and thereby implying that $u_{0}$ is actually in $D_{i,j}(T)$. This yields the desired contradiction, allowing us to conclude that $W'_{i,j}(T) = \emptyset$.
\end{proof}

The most important consequence of Lemma~\ref{lem:compact} is that, for each $i,j \in \{1,2,\ldots,\kappa-1\}$,
\begin{equation}
\bigcup_{n=1}^{\infty}W_{i,j}^{(n)}(T) = W_{i,j}(T) \text{ and } \bigcup_{n=1}^{\infty}L_{i,j}^{(n)}(T) = L_{i,j}(T).\label{eq:compact}
\end{equation}
Since the sequences $\left\{W_{i,j}^{(n)}(T)\right\}_{n \in \mathbb{N}_{0}}$ and $\left\{L_{i,j}^{(n)}(T)\right\}_{n \in \mathbb{N}_{0}}$ are, by definition, both increasing, hence \eqref{eq:compact} leads to
\begin{equation}
\lim_{n \rightarrow \infty}w_{i,j}^{(n)} = w_{i,j} \text{ and } \lim_{n \rightarrow \infty}\ell_{i,j}^{(n)} = \ell_{i,j}.\label{eq:limits}
\end{equation}

\subsection{The recurrence relations arising from our game}\label{subsec:recurrence}
The key to analyzing the game described in \S\ref{subsec:random_game} lies in deducing the recurrence relations that govern how the probabilities $\left\{w_{i,j}^{(n)}\right\}_{n \in \mathbb{N}_{0}}$ and $\left\{\ell_{i,j}^{(n)}\right\}_{n \in \mathbb{N}_{0}}$ relate with one another, leading to conclusions regarding the respective sequential limits $w_{i,j}$ and $\ell_{i,j}$ (by \eqref{eq:limits}), where $i,j \in \mathbb{N}$ with $\max\{i,j\} < \kappa$. 

For any $n \in \mathbb{N}_{0}$, a vertex $u$ of $T$ is in $L_{i,j}^{(n+1)}(T)$ if and only if either $u$ is childless, or \emph{every} child $v$ of $u$ satisfies one of the following conditions (we make use of Lemma~\ref{lem:containment} here):
\begin{enumerate}
\item $v \in W_{j,i+1}^{(n)}(T)$
\item or $v \in W_{j,i}^{(n)}(T) \setminus W_{j,i+1}^{(n)}(T)$ and $\omega_{T}(u,v) \in \{-1,0\}$,
\item or $v \in W_{j,i-1}^{(n)}(T) \setminus W_{j,i}^{(n)}(T)$ and $\omega_{T}(u,v) = -1$.
\end{enumerate}   
Conditioning on the event that $u$ has precisely $m$ children (recall, from \S\ref{subsec:random_game}, that the probability of this event is $\chi(m)$), we let $k$ denote the number of children that are in $W_{j,i+1}^{(n)}(T)$, $r$ denote the number of children $v$ that are in $W_{j,i}^{(n)}(T) \setminus W_{j,i+1}^{(n)}(T)$ and $\omega_{T}(u,v) \in \{-1,0\}$, and the remaining $m-k-r$ are children $v$ that are in $W_{j,i-1}^{(n)}(T)\setminus W_{j,i}^{(n)}(T)$ and $\omega_{T}(u,v) = -1$. We then have (noting that the case where $u$ is childless corresponds to $m=0$)
\begin{align}
\ell_{i,j}^{(n+1)} ={}& \sum_{m=0}^{\infty}\sum_{k=0}^{m}\sum_{r=0}^{m-k}{m \choose k}{m-k \choose r}\left(w_{j,i+1}^{(n)}\right)^{k}\left(w_{j,i}^{(n)}-w_{j,i+1}^{(n)}\right)^{r}(p_{-1}+p_{0})^{r}\left(w_{j,i-1}^{(n)}-w_{j,i}^{(n)}\right)^{m-k-r}p_{-1}^{m-k-r}\chi(m)\nonumber\\
={}& \sum_{m=0}^{\infty}\left\{w_{j,i+1}^{(n)} + \left(w_{j,i}^{(n)}-w_{j,i+1}^{(n)}\right)(p_{-1}+p_{0}) + \left(w_{j,i-1}^{(n)}-w_{j,i}^{(n)}\right)p_{-1}\right\}^{m}\chi(m)\nonumber\\
={}& G\left(p_{1}w_{j,i+1}^{(n)} + p_{0}w_{j,i}^{(n)} + p_{-1}w_{j,i-1}^{(n)}\right),\label{recurrence_1}
\end{align}
where, recall from \eqref{pgf}, that $G$ is the probability generating function corresponding to $\chi$.

Next, we note that a vertex $u$ of $T$ is in $W_{i,j}^{(n+1)}(T)$ if and only if $u$ has at least one child $v$ such that 
\begin{enumerate}
\item either $v \in L_{j,i-1}^{(n)}(T)$,
\item or $v \in L_{j,i}^{(n)}(T) \setminus L_{j,i-1}^{(n)}(T)$ and $\omega_{T}(u,v) \in \{0,1\}$,
\item or $v \in L_{j,i+1}^{(n)}(T) \setminus L_{j,i}^{(n)}(T)$ and $\omega_{T}(u,v) = 1$.
\end{enumerate}
We once again condition on the event that $u$ has precisely $m$ children, with $m$ varying over $\mathbb{N}$. In the second sum below, we further condition on the event that $u$ has $k$ children that are in $L_{j,i}^{(n)}(T) \setminus L_{j,i-1}^{(n)}(T)$, so that $1-p_{-1}^{k}$ denotes the probability that at least one of these children, say $v$, satisfies $\omega_{T}(u,v) \in \{0,1\}$. In the third sum below, we condition on the event that $u$ has $k$ children that are in $L_{j,i}^{(n)}(T) \setminus L_{j,i-1}^{(n)}(T)$, so that $p_{-1}^{k}$ denotes the probability that each such child $u'$ satisfies $\omega_{T}(u,u')=-1$, and that $u$ has $r$ children that are in $L_{j,i+1}^{(n)}(T) \setminus L_{j,i}^{(n)}(T)$, so that $1-(p_{-1}+p_{0})^{r}$ denotes the probability that at least one of these children, say $v$, satisfies $\omega_{T}(u,v)=1$.
\begin{align}
w_{i,j}^{(n+1)} ={}& \sum_{m=1}^{\infty}\Prob\left[\text{at least one child in } L_{j,i-1}^{(n)}(T)\right]\chi(m) \nonumber\\&+ \sum_{m=1}^{\infty}\Prob\left[\text{no child in } L_{j,i-1}^{(n)}(T), \text{ at least one child } v \text{ in } L_{j,i}^{(n)}(T) \setminus L_{j,i-1}^{(n)}(T) \text{ with } \omega_{T}(u,v)\in \{0,1\}\right]\chi(m)\nonumber\\& + \sum_{m=1}^{\infty}\Prob\Big[\text{no child in } L_{j,i-1}^{(n)}(T), \text{ for each child } u' \text{ in } L_{j,i}^{(n)}(T) \setminus L_{j,i-1}^{(n)}(T) \text{ we have } \omega_{T}(u,u') = -1,\nonumber\\& \text{ at least one child } v \text{ in } L_{j,i+1}^{(n)}(T) \setminus L_{j,i}^{(n)}(T) \text{ with } \omega_{T}(u,v) = 1\Big]\chi(m)\nonumber\\
={}& \sum_{m=1}^{\infty}\left\{1-\left(1-\ell_{j,i-1}^{(n)}\right)^{m}\right\}\chi(m) + \sum_{m=1}^{\infty}\sum_{k=1}^{m}{m \choose k}\left(\ell_{j,i}^{(n)}-\ell_{j,i-1}^{(n)}\right)^{k}\left\{1-p_{-1}^{k}\right\}\left(1-\ell_{j,i}^{(n)}\right)^{m-k}\chi(m) + \nonumber\\& \sum_{m=1}^{\infty}\sum_{k=0}^{m}\sum_{r=1}^{m-k}{m \choose k}{m-k \choose r}\left(\ell_{j,i}^{(n)}-\ell_{j,i-1}^{(n)}\right)^{k}p_{-1}^{k}\left(\ell_{j,i+1}^{(n)}-\ell_{j,i}^{(n)}\right)^{r}\left\{1-(p_{-1}+p_{0})^{r}\right\}\left(1-\ell_{j,i+1}^{(n)}\right)^{m-k-r}\chi(m)\nonumber\\
={}& 1 - G\left(1-\ell_{j,i-1}^{(n)}\right) + \sum_{m=0}^{\infty}\sum_{k=0}^{m}{m \choose k}\left(\ell_{j,i}^{(n)}-\ell_{j,i-1}^{(n)}\right)^{k}\left(1-\ell_{j,i}^{(n)}\right)^{m-k}\chi(m)\nonumber\\& - \sum_{m=0}^{\infty}\sum_{k=0}^{m}{m \choose k}\left(\ell_{j,i}^{(n)}-\ell_{j,i-1}^{(n)}\right)^{k}p_{-1}^{k}\left(1-\ell_{j,i}^{(n)}\right)^{m-k}\chi(m)\nonumber\\&+ \sum_{m=0}^{\infty}\sum_{k=0}^{m}\sum_{r=0}^{m-k}{m \choose k}{m-k \choose r}\left(\ell_{j,i}^{(n)}-\ell_{j,i-1}^{(n)}\right)^{k}p_{-1}^{k}\left(\ell_{j,i+1}^{(n)}-\ell_{j,i}^{(n)}\right)^{r}\left(1-\ell_{j,i+1}^{(n)}\right)^{m-k-r}\chi(m)\nonumber\\& - \sum_{m=0}^{\infty}\sum_{k=0}^{m}\sum_{r=0}^{m-k}{m \choose k}{m-k \choose r}\left(\ell_{j,i}^{(n)}-\ell_{j,i-1}^{(n)}\right)^{k}p_{-1}^{k}\left(\ell_{j,i+1}^{(n)}-\ell_{j,i}^{(n)}\right)^{r}(p_{-1}+p_{0})^{r}\left(1-\ell_{j,i+1}^{(n)}\right)^{m-k-r}\chi(m)\nonumber\\
={}& 1 - G\left(1-\ell_{j,i-1}^{(n)}\right) + G\left(1-\ell_{j,i-1}^{(n)}\right) - G\left(1-(p_{0}+p_{1})\ell_{j,i}^{(n)}-p_{-1}\ell_{j,i-1}^{(n)}\right)\nonumber\\&+ G\left(1-(p_{0}+p_{1})\ell_{j,i}^{(n)}-p_{-1}\ell_{j,i-1}^{(n)}\right) - G\left(1-p_{1}\ell_{j,i+1}^{(n)}-p_{0}\ell_{j,i}^{(n)}-p_{-1}\ell_{j,i-1}^{(n)}\right)\nonumber\\
={}& 1 - G\left(1-p_{1}\ell_{j,i+1}^{(n)}-p_{0}\ell_{j,i}^{(n)}-p_{-1}\ell_{j,i-1}^{(n)}\right).\label{recurrence_2}
\end{align}

\subsection{Proof of Theorem~\ref{thm:1}}\label{subsec:thm:1_proof}
Since the probability generating function $G$ is a continuous function, we take limits, as $n \rightarrow \infty$, of both sides of each of \eqref{recurrence_1} and \eqref{recurrence_2}, to deduce, via \eqref{eq:limits}, that for $i, j \in \{1,2,\ldots,\kappa-1\}$,
\begin{equation}\label{recurrence_3}
\ell_{i,j} = G\left(p_{1}w_{j,i+1} + p_{0}w_{j,i} + p_{-1}w_{j,i-1}\right)
\end{equation}
and
\begin{align}
w_{i,j} ={}& 1 - G\left(1-p_{1}\ell_{j,i+1}-p_{0}\ell_{j,i}-p_{-1}\ell_{j,i-1}\right)\label{recurrence_4'}\\
={}& 1-G\left(p_{1}+p_{0}+p_{-1}-p_{1}\ell_{j,i+1}-p_{0}\ell_{j,i}-p_{-1}\ell_{j,i-1}\right)\nonumber\\
={}& 1-G\left\{p_{1}\left(1-\ell_{j,i+1}\right)+p_{0}\left(1-\ell_{j,i}\right)+p_{-1}\left(1-\ell_{j,i-1}\right)\right\}.\label{recurrence_4}
\end{align}
We note two special cases of each of \eqref{recurrence_3} and \eqref{recurrence_4}. First, setting $i=1$ in \eqref{recurrence_3}, we obtain
\begin{align}
\ell_{1,j} = G\left(p_{1}w_{j,2} + p_{0}w_{j,1} + p_{-1}w_{j,0}\right) = G\left(p_{-1}+p_{1}w_{j,2} + p_{0}w_{j,1}\right),\label{recurrence_3_1}
\end{align}
since $w_{j,0}=1$ (which follows from $W_{j,0}(T) = V(T)$, as remarked right after defining the subsets in \S\ref{subsec:notations_definitions}); setting $i=\kappa-1$ in \eqref{recurrence_3}, we obtain
\begin{align}
\ell_{\kappa-1,j} = G\left(p_{1}w_{j,\kappa} + p_{0}w_{j,\kappa-1} + p_{-1}w_{j,\kappa-2}\right) = G\left(p_{0}w_{j,\kappa-1} + p_{-1}w_{j,\kappa-2}\right),\label{recurrence_3_2}
\end{align}
since $w_{j,\kappa}=0$ (which follows from $L_{j,\kappa}(T)=V(T)$, as remarked right after defining the subsets in \S\ref{subsec:notations_definitions}). Likewise, setting $i=1$ in \eqref{recurrence_4}, we obtain
\begin{align}
w_{1,j} ={}& 1-G\left\{p_{1}\left(1-\ell_{j,2}\right)+p_{0}\left(1-\ell_{j,1}\right)+p_{-1}\left(1-\ell_{j,0}\right)\right\}\nonumber\\
={}& 1-G\left\{p_{1}\left(1-\ell_{j,2}\right)+p_{0}\left(1-\ell_{j,1}\right)+p_{-1}\right),\label{recurrence_4_1}
\end{align}
since $\ell_{j,0}=0$ (which, again, follows from $W_{j,0}(T) = V(T)$), and setting $i=\kappa-1$ in \eqref{recurrence_4}, we obtain
\begin{align}
w_{\kappa-1,j} ={}& 1-G\left\{p_{1}\left(1-\ell_{j,\kappa}\right)+p_{0}\left(1-\ell_{j,\kappa-1}\right)+p_{-1}\left(1-\ell_{j,\kappa-2}\right)\right\} \nonumber\\
={}& 1-G\left\{p_{0}\left(1-\ell_{j,\kappa-1}\right)+p_{-1}\left(1-\ell_{j,\kappa-2}\right)\right\},\label{recurrence_4_2}
\end{align}
since $\ell_{j,\kappa}=1$ (which, again, follows from $L_{j,\kappa}(T)=V(T)$).

Defining the matrices $W$, $L$, $P$ and $J$, as well as the vectors $\mathbf{1}$ and $\mathbf{e}_{1}$, as in the statement of Theorem~\ref{thm:1}, we see that \eqref{recurrence_3}, \eqref{recurrence_3_1} and \eqref{recurrence_3_2} can be combined via the following matrix equation:
\begin{equation}
L = f\left(p_{-1}\mathbf{e}_{1}\mathbf{1}^{T} + PW^{T}\right).\label{recurrence_matrix_1}
\end{equation}
Next, we see that \eqref{recurrence_4}, \eqref{recurrence_4_1} and \eqref{recurrence_4_2} can be combined via the following matrix equation:
\begin{equation}\label{recurrence_matrix_2}
J - W^{T} = f\left(p_{-1}\mathbf{1}\mathbf{e}_{1}^{T}+\left(\mathbf{1}\mathbf{1}^{T}-L\right)P^{T}\right).
\end{equation}
Combining \eqref{recurrence_matrix_1} and \eqref{recurrence_matrix_2}, we obtain
\begin{equation}
L = f\left[p_{-1}\mathbf{e}_{1}\mathbf{1}^{T} + P\left\{J - f\left(p_{-1}\mathbf{1}\mathbf{e}_{1}^{T}+\left(J-L\right)P^{T}\right)\right\}\right],\label{recurrence_matrix}
\end{equation}
which proves the first assertion of Theorem~\ref{thm:1}, i.e.\ that $L$ is a fixed point of the function $h$ as defined in \eqref{fixed_point_eq}.

We now prove that $J - W$ is another fixed point of $h$, as follows. Recall that the function $f$, defined in the statement of Theorem~\ref{thm:1}, involves applying the pgf $G$ to \emph{each coordinate} of the matrix on which $f$ is being applied, so that if $f(A) = B$ for any two $(\kappa-1)\times(\kappa-1)$ matrices $A$ and $B$, we also have $f(A^{T}) = B^{T}$. Using this and the fact that $(PA)^{T} = A^{T}P^{T}$ for any $(\kappa-1)\times(\kappa-1)$ matrix $A$, we rewrite \eqref{recurrence_matrix_2} as 
\begin{align}
J-W=\left(J - W^{T}\right)^{T} = f\left(\left[p_{-1}\mathbf{1}\mathbf{e}_{1}^{T}+\left(J-L\right)P^{T}\right]^{T}\right) = f\left(p_{-1}\mathbf{e}_{1}\mathbf{1}^{T}+P\left(J-L\right)^{T}\right),\label{matrix_recurrence_3}
\end{align} 
and we rewrite \eqref{recurrence_matrix_1} as 
\begin{align}
{}&L = f\left(p_{-1}\mathbf{e}_{1}\mathbf{1}^{T} + PW^{T}\right) \Longleftrightarrow \left(J-L\right)^{T} = \left\{J-f\left(p_{-1}\mathbf{e}_{1}\mathbf{1}^{T} + PW^{T}\right)\right\}^{T}\nonumber\\
\Longleftrightarrow{}& \left(J-L\right)^{T} = J-f\left\{\left(p_{-1}\mathbf{e}_{1}\mathbf{1}^{T} + PW^{T}\right)^{T}\right\} = J-f\left\{p_{-1}\mathbf{1}\mathbf{e}_{1}^{T}+WP^{T}\right\}\nonumber\\
\Longleftrightarrow{}& \left(J-L\right)^{T} = J-f\left[p_{-1}\mathbf{1}\mathbf{e}_{1}^{T}+\left\{J-\left(J-W\right)\right\}P^{T}\right].\label{matrix_recurrence_4}
\end{align}
Combining \eqref{matrix_recurrence_3} and \eqref{matrix_recurrence_4}, we obtain
\begin{align}
J-W = f\left(p_{-1}\mathbf{e}_{1}\mathbf{1}^{T}+P\left\{J-f\left[p_{-1}\mathbf{1}\mathbf{e}_{1}^{T}+\left\{J-\left(J-W\right)\right\}P^{T}\right]\right\}\right),\nonumber
\end{align}
thus proving that $J-W$ is a fixed point of the function $h$, as claimed in Theorem~\ref{thm:1}.

We now prove \eqref{smallest_largest_fixed_point}, by first stating and proving Lemma~\ref{lem:monotone_h}. To this end, recall the relation $A \preceq B$ between two matrices of the same order (defined right before the statement of Theorem~\ref{thm:1}). 
\begin{lemma}\label{lem:monotone_h}
Let $X_{1}$ and $X_{2}$ be two matrices belonging to the space $\mathcal{S}$ defined in Theorem~\ref{thm:1}. If $X_{2} \preceq X_{1}$, then $h(X_{2}) \preceq h(X_{1})$ as well, where $h$ is as defined in \eqref{fixed_point_eq}.
\end{lemma}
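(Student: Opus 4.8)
The plan is to prove monotonicity purely by tracking how the order relation $\preceq$ propagates through the composition defining $h$ in \eqref{fixed_point_eq}, rather than by any computation involving fixed points. Three elementary facts drive the argument. First, since $\chi(0)<1$, the pgf $G$ is strictly increasing on $[0,1]$, so $f$ is monotone entrywise: $A \preceq B$ implies $f(A) \preceq f(B)$. Second, the matrix $P$ (and hence $P^{T}$) has only nonnegative entries, so left multiplication by $P$ and right multiplication by $P^{T}$ both preserve $\preceq$, and adding a fixed matrix preserves $\preceq$. Third, the affine map $X \mapsto J - X$ \emph{reverses} $\preceq$. The crucial structural observation is that $h$ involves \emph{two} nested $J - (\cdot)$ operations, so the two order reversals cancel and the net effect on $X$ is order-preserving.

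Concretely, assume $X_{2} \preceq X_{1}$ and read \eqref{fixed_point_eq} from the inside out. The innermost reversal gives $J - X_{1} \preceq J - X_{2}$; right multiplication by $P^{T}$ and addition of $p_{-1}\mathbf{1}\mathbf{e}_{1}^{T}$ preserve this, and applying the increasing map $f$ yields
\[
f\bigl(p_{-1}\mathbf{1}\mathbf{e}_{1}^{T} + (J - X_{1})P^{T}\bigr) \preceq f\bigl(p_{-1}\mathbf{1}\mathbf{e}_{1}^{T} + (J - X_{2})P^{T}\bigr).
\]
Subtracting each side from $J$ reverses the order a second time, so the inner block $J - f(\cdots)$ is now \emph{increasing} in $X$. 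Left multiplication by $P$ and addition of $p_{-1}\mathbf{e}_{1}\mathbf{1}^{T}$ preserve this order, and a final application of $f$ delivers $h(X_{2}) \preceq h(X_{1})$, as desired. The whole argument is thus a careful bookkeeping of which steps preserve and which reverse $\preceq$, with the two reversals guaranteeing the correct (increasing) conclusion.

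The one point demanding care — and what I expect to be the main obstacle — is a \emph{domain check}: each application of $f$ requires its matrix argument to lie in $\mathcal{S}$, i.e.\ to have all entries in $[0,1]$, since that is where $G$ is defined and increasing. I would verify this by bounding entries against the row sums of $P$. Every row of $P$ sums to at most $1$, so every entry of $(J-X)P^{T}$ is at most the corresponding row sum of $P$ and hence at most $1$; the shift $p_{-1}\mathbf{1}\mathbf{e}_{1}^{T}$ adds $p_{-1}$ only to the first column, where the relevant row of $P$ has sum $p_{0}+p_{1}=1-p_{-1}$, so the entries stay in $[0,1]$. The identical accounting applied to $P\bigl(J - f(\cdots)\bigr) + p_{-1}\mathbf{e}_{1}\mathbf{1}^{T}$ (now the shift falls in the first \emph{row}) shows the outer argument of $f$ also lies in $\mathcal{S}$. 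Once this check is in place, the monotonicity follows immediately from the order-theoretic steps above.
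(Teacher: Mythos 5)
Your proof is correct and is essentially the same argument as the paper's: the paper verifies $h(X_{2})_{i,j} \leqslant h(X_{1})_{i,j}$ entry by entry, using the increasing nature of $G$ and observing that the two sign reversals (the two nested $1-(\cdot)$ operations) cancel, which is exactly your matrix-level bookkeeping written out coordinate-wise. Your explicit domain check that the arguments of $f$ stay in $\mathcal{S}$ is a sound addition (the row-sum accounting for $P$ and the placement of the $p_{-1}$ shift are both correct) that the paper leaves implicit.
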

\begin{proof}
Let the $(i,j)$-th entries of $X_{1}$ and $X_{2}$ be $x^{(1)}_{i,j}$ and $x^{(2)}_{i,j}$ respectively, for each $i,j \in \{1,2,\ldots,\kappa-1\}$. For $i,j \in \{2,\ldots,\kappa-2\}$, the $(i,j)$-th element of $h(X_{1})$ is given by
\begin{align}
h(X_{1})_{i,j} ={}& G\Big[1-p_{-1}G\Big\{1-p_{-1}x^{(1)}_{i-1,j-1}-p_{0}x^{(1)}_{i-1,j}-p_{1}x^{(1)}_{i-1,j+1}\Big\}-p_{0}G\Big\{1-p_{-1}x^{(1)}_{i,j-1}-p_{0}x^{(1)}_{i,j}-p_{1}x^{(1)}_{i,j+1}\Big\}\nonumber\\&-p_{1}G\Big\{1-p_{-1}x^{(1)}_{i+1,j-1}-p_{0}x^{(1)}_{i+1,j}-p_{1}x^{(1)}_{i+1,j+1}\Big\}\Big],\nonumber
\end{align}
while the element in the $i$-th row and $j$-th column of $h(X_{2})$ is
\begin{align}
h(X_{2})_{i,j} ={}& G\Big[1-p_{-1}G\Big\{1-p_{-1}x^{(2)}_{i-1,j-1}-p_{0}x^{(2)}_{i-1,j}-p_{1}x^{(2)}_{i-1,j+1}\Big\}-p_{0}G\Big\{1-p_{-1}x^{(2)}_{i,j-1}-p_{0}x^{(2)}_{i,j}-p_{1}x^{(2)}_{i,j+1}\Big\}\nonumber\\&-p_{1}G\Big\{1-p_{-1}x^{(2)}_{i+1,j-1}-p_{0}x^{(2)}_{i+1,j}-p_{1}x^{(2)}_{i+1,j+1}\Big\}\Big].\nonumber
\end{align}
Since $x^{(2)}_{i,j} \leqslant x^{(1)}_{i,j}$ for each $i,j \in \{1,2,\ldots,\kappa-1\}$, and since $G$ is monotonically increasing, we have 
\begin{align}
{}&-p_{-1}G\Big\{1-p_{-1}x^{(1)}_{i-1,j-1}-p_{0}x^{(1)}_{i-1,j}-p_{1}x^{(1)}_{i-1,j+1}\Big\} \geqslant -p_{-1}G\Big\{1-p_{-1}x^{(2)}_{i-1,j-1}-p_{0}x^{(2)}_{i-1,j}-p_{1}x^{(2)}_{i-1,j+1}\Big\},\label{ineq_1}\\
{}&-p_{0}G\Big\{1-p_{-1}x^{(1)}_{i,j-1}-p_{0}x^{(1)}_{i,j}-p_{1}x^{(1)}_{i,j+1}\Big\} \geqslant -p_{0}G\Big\{1-p_{-1}x^{(2)}_{i,j-1}-p_{0}x^{(2)}_{i,j}-p_{1}x^{(2)}_{i,j+1}\Big\},\label{ineq_2}\\
{}&-p_{1}G\Big\{1-p_{-1}x^{(1)}_{i+1,j-1}-p_{0}x^{(1)}_{i+1,j}-p_{1}x^{(1)}_{i+1,j+1}\Big\} \geqslant -p_{1}G\Big\{1-p_{-1}x^{(2)}_{i+1,j-1}-p_{0}x^{(2)}_{i+1,j}-p_{1}x^{(2)}_{i+1,j+1}\Big\}.\label{ineq_3}
\end{align}
Note, further, that if any of the inequalities
\begin{equation}
x^{(1)}_{i-1,j-1} \geqslant x^{(2)}_{i-1,j-1}, \ x^{(1)}_{i-1,j} \geqslant x^{(2)}_{i-1,j}, \text{ and } x^{(1)}_{i-1,j+1} \geqslant x^{(2)}_{i-1,j+1}\nonumber
\end{equation}
is strict, then so will be the inequality in \eqref{ineq_1} as long as $p_{-1} > 0$, since $G$ is strictly increasing (via the assumption made right after defining $G$ in \eqref{pgf} in \S\ref{subsec:random_game}). Similar conclusions are true for the inequalities in \eqref{ineq_2} and \eqref{ineq_3}. Combining the inequalities in \eqref{ineq_1}, \eqref{ineq_2} and \eqref{ineq_3} and once again applying the monotonically increasing nature of $G$, we conclude that $h(X_{1})_{i,j} \geqslant h(X_{2})_{i,j}$ for all $i,j \in \{2,\ldots,\kappa-2\}$. Very similar arguments lead to the inequalities $h(X_{1})_{i,j} \geqslant h(X_{2})_{i,j}$ for all $i,j \in \{1,2,\ldots,\kappa-1\}$ such that $\{i,j\} \cap \{1,\kappa-1\} \neq \emptyset$. This concludes the proof of Lemma~\ref{lem:monotone_h}.
\end{proof} 

Recall from \S\ref{subsec:notations_definitions} that for $i,j \in \{1,2,\ldots,\kappa-1\}$, we set $L_{i,j}^{(0)}(T) = \emptyset$. Consequently, $\ell_{i,j}^{(0)}=0$ for $i,j \in \{1,2,\ldots,\kappa-1\}$. Let $L^{(n)}$, for each $n \in \mathbb{N}_{0}$, denote the $(\kappa-1)\times(\kappa-1)$ matrix in which $L^{(n)}_{i,j}=\ell_{i,j}^{(n)}$, for all $i,j \in \{1,2,\ldots,\kappa-1\}$. Recall, from above, how we showed, using \eqref{recurrence_3} and \eqref{recurrence_4} (and their special versions \eqref{recurrence_3_1}, \eqref{recurrence_3_2}, \eqref{recurrence_4_1} and \eqref{recurrence_4_2}), that \eqref{recurrence_matrix} holds. Adopting this same approach, but now, utilizing \eqref{recurrence_1} and \eqref{recurrence_2} instead of \eqref{recurrence_3} and \eqref{recurrence_4}, we obtain, for each $n \in \mathbb{N}$,
\begin{equation}
L^{(n+1)} = f\left[p_{-1}\mathbf{e}_{1}\mathbf{1}^{T} + P\left\{J - f\left(p_{-1}\mathbf{1}\mathbf{e}_{1}^{T}+\left(J-L^{(n-1)}\right)P^{T}\right)\right\}\right] = h\left(L^{(n-1)}\right).\label{recurrence_matrix_n}
\end{equation}
Since $\ell_{i,j}^{(0)}=0$ for all $i,j \in \{1,2,\ldots,\kappa-1\}$, each element of $L^{(0)}$ is $0$. Let $\hat{X}$ denote a fixed point of the function $h$, with $\hat{X} \in \mathcal{S}$ (so that each entry of $\hat{X}$ is non-negative). Then $L^{(0)} \preceq \hat{X}$. Applying Lemma~\ref{lem:monotone_h} repeatedly, and using \eqref{recurrence_matrix_n}, we obtain:
\begin{align}
{}&L^{(2)} = h\left(L^{(0)}\right) \preceq h\left(\hat{X}\right) = \hat{X}\nonumber\\
{}&\vdots\nonumber\\
{}&L^{(2n)} = h\left(L^{(2n-2)}\right) = \cdots = h^{(n)}\left(L^{(0)}\right) \preceq h^{(n)}\left(\hat{X}\right) = \hat{X},\label{L^{(2n)}_hat{X}_ineq}
\end{align}
where we recall that $h^{(n)}$ indicates the $n$-fold composition of $h$ with itself. The inequality in \eqref{L^{(2n)}_hat{X}_ineq} holds for every $n \in \mathbb{N}_{0}$. As we take the limit of both sides of \eqref{L^{(2n)}_hat{X}_ineq} as $n \rightarrow \infty$ (which means that we consider the limit of each entry of the matrix $L^{(2n)}$), we see, via \eqref{eq:limits}, that
\begin{equation}
L = \lim_{n \rightarrow \infty}L^{(2n)} \preceq \hat{X}.\nonumber
\end{equation}
This completes the proof of the first inequality in \eqref{smallest_largest_fixed_point}. 

The second inequality in \eqref{smallest_largest_fixed_point} follows via a similar argument, but this time, we define, for each $n \in \mathbb{N}_{0}$, $W^{(n)}$ to be the $(\kappa-1)\times(\kappa-1)$ matrix in which $W^{(n)}_{i,j}=w_{i,j}^{(n)}$ for all $i,j \in \{1,2,\ldots,\kappa-1\}$, and we replace \eqref{recurrence_matrix_n} by 
\begin{equation}
J - W^{(n+1)} = h\left(J-W^{(n-1)}\right).\label{recurrence_matrix_n_W}
\end{equation}
Moreover, since we set $W_{i,j}^{(0)}(T)=\emptyset$ for each $i,j \in \{1,2,\ldots,\kappa-1\}$, we have $w_{i,j}^{(0)}=0$ for each $i,j \in \{1,2,\ldots,\kappa-1\}$. Once again, let $\hat{X}$ denote a fixed point of the function $h$, with $\hat{X} \in \mathcal{S}$ (so that each entry of $\hat{X}$ is bounded above by $1$). Consequently, we have $\hat{X} \preceq J-W^{(0)}$. Applying Lemma~\ref{lem:monotone_h} and \eqref{recurrence_matrix_n_W} repeatedly, we obtain, analogous to \eqref{L^{(2n)}_hat{X}_ineq}, the inequality
\begin{equation}
\hat{X} \preceq h^{(n)}\left(J-W^{(0)}\right) = J-W^{(2n)},\label{W^{(2n)}_hat{X}_ineq}
\end{equation}
for each $n \in \mathbb{N}_{0}$. Taking the limit as $n \rightarrow \infty$ on both sides of \eqref{W^{(2n)}_hat{X}_ineq}, we obtain $\hat{X} \preceq J-W$, completing the proof of the second inequality in \eqref{smallest_largest_fixed_point}.

We now come to the fourth assertion stated in Theorem~\ref{thm:1}, and the first pertaining to the probabilities of draw. Suppose $h$ has at least two distinct fixed points, $\hat{X}$ and $\hat{Y}$, in $\mathcal{S}$, with $\hat{X}_{i,j}=\hat{x}_{i,j}$ and $\hat{Y}_{i,j}=\hat{y}_{i,j}$ for $i,j \in \{1,2,\ldots,\kappa-1\}$. There must exist at least one pair $(i_{0},j_{0})$ of coordinates $i_{0}, j_{0} \in \{1,2,\ldots,\kappa-1\}$ such that $\hat{x}_{i_{0},j_{0}} < \hat{y}_{i_{0},j_{0}}$. By \eqref{smallest_largest_fixed_point}, we must have
\begin{equation}
\ell_{i_{0},j_{0}} \leqslant \hat{x}_{i_{0},j_{0}} < \hat{y}_{i_{0},j_{0}} \leqslant 1-w_{i_{0},j_{0}},\nonumber
\end{equation}
so that $d_{i_{0},j_{0}}>0$ in this case. On the other hand, if $h$ has a unique fixed point in $\mathcal{S}$, then this fixed point must equal \emph{both} $L$ and $J-W$, thereby implying that $\ell_{i,j}=1-w_{i,j} \implies d_{i,j}=0$ for each $i,j \in \{1,2,\ldots,\kappa-1\}$. This completes the proof of the assertion in Theorem~\ref{thm:1} that $d_{i,j}=0$ for each $i,j \in \{1,2,\ldots,\kappa-1\}$ if and only if $h$ has a unique fixed point in $\mathcal{S}$.

Defining the function $g$ as in \eqref{g_composition}, we see that $h = g^{(2)}$, i.e.\ $h$ is obtained by taking the composition of $g$ with itself. Therefore, any fixed point of $g$ is also a fixed point of $h$. Since $g$ maps $\mathcal{S}$ to itself, and since $\mathcal{S}$ is a compact, convex set, hence, by Brouwer's fixed point theorem, $g$ has at least one fixed point in $\mathcal{S}$ (see \cite{brouwer1911abbildung}). When $d_{i,j}=0$ for each $i,j \in \{1,2,\ldots,\kappa-1\}$, since $h$ has a unique fixed point in $\mathcal{S}$, hence so does $g$. Moreover, this fixed point then equals $L$ (which, in turn, is equal to $J-W$). 

This brings us to the end of the proof of Theorem~\ref{thm:1}. \qed

\subsection{Proof of Theorem~\ref{thm:2}}\label{sec:proof_thm_2}
We begin with the proof of part \eqref{draw_1} of Theorem~\ref{thm:2}. Fix any $i,j \in \{2,\ldots,\kappa-1\}$. Recall from \S\ref{subsec:notations_definitions} that $d_{i,j}$ indicates the probability of the event that the root $\phi$ of our GW tree $T$ belongs to the set $D_{i,j}(T)$, i.e.\ that the game $\mathcal{G}(i,j,\kappa,T)[\phi]$ culminates in a draw. Recall also the probabilities $\ell_{i,j}$ and $w_{i,j}$ of the events that $\phi$ belongs to $L_{i,j}(T)$ and $W_{i,j}(T)$ respectively. Note that $d_{i,j}$ equals $0$ if and only if $\ell_{i,j}+w_{i,j}=1$, which is equivalent to
\begin{align}
{}&G\left(p_{1}w_{j,i+1} + p_{0}w_{j,i} + p_{-1}w_{j,i-1}\right)+1-G\left\{p_{1}\left(1-\ell_{j,i+1}\right)+p_{0}\left(1-\ell_{j,i}\right)+p_{-1}\left(1-\ell_{j,i-1}\right)\right\}=1\nonumber\\
\Longleftrightarrow{}& G\left(p_{1}w_{j,i+1} + p_{0}w_{j,i} + p_{-1}w_{j,i-1}\right)=G\left\{p_{1}\left(1-\ell_{j,i+1}\right)+p_{0}\left(1-\ell_{j,i}\right)+p_{-1}\left(1-\ell_{j,i-1}\right)\right\}\nonumber\\
\Longleftrightarrow{}& p_{1}w_{j,i+1} + p_{0}w_{j,i} + p_{-1}w_{j,i-1}=p_{1}\left(1-\ell_{j,i+1}\right)+p_{0}\left(1-\ell_{j,i}\right)+p_{-1}\left(1-\ell_{j,i-1}\right)\nonumber\\
{}& p_{1}\left(1-\ell_{j,i+1}-w_{j,i+1}\right)+p_{0}\left(1-\ell_{j,i}-w_{j,i}\right)+p_{-1}\left(1-\ell_{j,i-1}-w_{j,i-1}\right)=0,\label{draw_0_equiv_cond_1}
\end{align}
where we make use of \eqref{recurrence_3} and \eqref{recurrence_4}, and the assumption, stated in Theorem~\ref{thm:2}, that $G$ is strictly increasing on $[0,1]$ (and hence bijective). Note that, under the assumption that each of $p_{-1}$, $p_{0}$ and $p_{1}$ is strictly positive, \eqref{draw_0_equiv_cond_1} holds if and only if 
\begin{align}
{}&\ell_{j,i+1}+w_{j,i+1}=\ell_{j,i}+w_{j,i}=\ell_{j,i-1}+w_{j,i-1}=1 \Longleftrightarrow d_{j,i+1}=d_{j,i}=d_{j,i-1}=0.\label{draw_0_equiv_cond_2}
\end{align}
Utilizing \eqref{draw_0_equiv_cond_2} for various values of $i$ and $j$, we obtain
\begin{align}
{}&d_{j,i-1}=0 \Longleftrightarrow d_{i-1,j-1}=d_{i-1,j}=d_{i-1,j+1}=0,\label{draw_0_equiv_cond_3}\\
{}&d_{j,i}=0 \Longleftrightarrow d_{i,j-1}=d_{i,j}=d_{i,j+1}=0,\label{draw_0_equiv_cond_4}\\
{}&d_{i-1,j}=0 \Longleftrightarrow d_{j,i-2}=d_{j,i-1}=d_{j,i}=0,\label{draw_0_equiv_cond_5}\\
{}&d_{i,j-1}=0 \Longleftrightarrow d_{j-1,i-1}=d_{j-1,i}=d_{j-1,i+1}=0,\label{draw_0_equiv_cond_10}\\
{}&d_{j-1,i}=0 \Longleftrightarrow d_{i,j-2}=d_{i,j-1}=d_{i,j}=0.\label{draw_0_equiv_cond_11}
\end{align}
We thus deduce the following sequence of implications, from \eqref{draw_0_equiv_cond_2} and \eqref{draw_0_equiv_cond_3}:
\begin{align}
d_{i,j}=0 \implies d_{j,i-1}=0 \implies d_{i-1,j}=0\label{draw_0_equiv_cond_6}
\end{align}
and using  \eqref{draw_0_equiv_cond_4} and \eqref{draw_0_equiv_cond_5}, we deduce:
\begin{align}
d_{i-1,j}=0 \implies d_{j,i}=0 \implies d_{i,j}=0,\label{draw_0_equiv_cond_7}
\end{align}
so that combining \eqref{draw_0_equiv_cond_6} and \eqref{draw_0_equiv_cond_7}, we conclude that
\begin{equation}
d_{i,j}=0 \Longleftrightarrow d_{i-1,j}=0.\label{draw_0_equiv_cond_8}
\end{equation}
On the other hand, from \eqref{draw_0_equiv_cond_2} and \eqref{draw_0_equiv_cond_4}, we deduce that
\begin{equation}
d_{i,j}=0 \implies d_{j,i}=0 \implies d_{i,j-1}=0,\label{draw_0_equiv_cond_9}
\end{equation}
whereas using \eqref{draw_0_equiv_cond_10} and \eqref{draw_0_equiv_cond_11}, we obtain
\begin{equation}
d_{i,j-1}=0 \implies d_{j-1,i}=0 \implies d_{i,j}=0,\label{draw_0_equiv_cond_12}
\end{equation}
so that combining \eqref{draw_0_equiv_cond_9} and \eqref{draw_0_equiv_cond_12}, we conclude that
\begin{equation}
d_{i,j}=0 \Longleftrightarrow d_{i,j-1}=0.\label{draw_0_equiv_cond_13}
\end{equation}

Next, let us consider $i,j \in \{1,2,\ldots,\kappa-1\}$ such that precisely one of $i$ and $j$ equals $1$. Without loss of generality, let $i=1$ and $j \in \{2,\ldots,\kappa-1\}$. In this case, $d_{1,j}=0$ if and only if $w_{1,j}+\ell_{1,j}=1$, and from \eqref{recurrence_3_1} and \eqref{recurrence_4_1}, this becomes equivalent to
\begin{align}
{}& G\left\{p_{-1}+p_{0}w_{j,1}+p_{1}w_{j,2}\right\}+1-G\left\{p_{-1}+p_{0}\left(1-\ell_{j,1}\right)+p_{1}\left(1-\ell_{j,2}\right)\right\}=1\nonumber\\
\Longleftrightarrow{}& G\left\{p_{-1}+p_{0}w_{j,1}+p_{1}w_{j,2}\right\}=G\left\{p_{-1}+p_{0}\left(1-\ell_{j,1}\right)+p_{1}\left(1-\ell_{j,2}\right)\right\}\nonumber\\
\Longleftrightarrow{}& p_{-1}+p_{0}w_{j,1}+p_{1}w_{j,2}=p_{-1}+p_{0}\left(1-\ell_{j,1}\right)+p_{1}\left(1-\ell_{j,2}\right)\nonumber\\
\Longleftrightarrow{}& p_{0}\left(1-\ell_{j,1}-w_{j,1}\right)+p_{1}\left(1-\ell_{j,2}-w_{j,2}\right)=0,\label{draw_0_equiv_cond_14}
\end{align}
where, once again, we make use of the strictly increasing (and hence bijective) nature of $G$. When $p_{0}$ and $p_{1}$ are both strictly positive, the identity in \eqref{draw_0_equiv_cond_14} holds if and only if 
\begin{equation}
\ell_{j,1}+w_{j,1}=\ell_{j,2}+w_{j,2}=1 \Longleftrightarrow d_{j,1}=d_{j,2}=0.\label{draw_0_equiv_cond_15}
\end{equation}
On the other hand, from \eqref{draw_0_equiv_cond_2}, we deduce that
\begin{equation}
d_{j,1}=0 \Longleftrightarrow d_{1,j-1}=d_{1,j}=d_{1,j+1}=0.\label{draw_0_equiv_cond_16}
\end{equation}
From \eqref{draw_0_equiv_cond_15} and \eqref{draw_0_equiv_cond_16}, we obtain
\begin{align}
d_{1,j}=0 \implies d_{j,1}=0 \implies d_{1,j-1}=0.\label{draw_0_equiv_cond_17}
\end{align}
Replacing $j$ by $j-1$ in \eqref{draw_0_equiv_cond_15}, we obtain
\begin{equation}
d_{1,j-1}=0 \Longleftrightarrow d_{j-1,1}=d_{j-1,2}=0, \label{draw_0_equiv_cond_18}
\end{equation}
and from \eqref{draw_0_equiv_cond_2}, we deduce that
\begin{equation}
d_{j-1,1}=0 \implies d_{1,j-1}=d_{1,j}=0.\label{draw_0_equiv_cond_19}
\end{equation}
Combining \eqref{draw_0_equiv_cond_18} and \eqref{draw_0_equiv_cond_19}, we obtain
\begin{equation}
d_{1,j-1}=0 \implies d_{j-1,1}=0 \implies d_{1,j}=0.\label{draw_0_equiv_cond_20}
\end{equation}
From \eqref{draw_0_equiv_cond_17} and \eqref{draw_0_equiv_cond_20}, we conclude that
\begin{equation}
d_{1,j}=0 \Longleftrightarrow d_{1,j-1}=0.\label{draw_0_equiv_cond_21}
\end{equation}
In a very similar manner, when $i \in \{2,\ldots,\kappa-1\}$ and $j=1$, we can show that $d_{i,1}=0$ if and only if $d_{i-1,1}=0$. Therefore, $d_{i,j}=0$ for \emph{any} pair $(i,j) \in \{1,2,\ldots,\kappa-1\}^{2}$ if and only if $d_{i',j'}=0$ for \emph{every} pair $(i',j') \in \{1,2,\ldots,\kappa-1\}^{2}$, thus concluding the proof of \eqref{draw_1} of Theorem~\ref{thm:2}.

We now come to the proof of part \eqref{draw_2} of Theorem~\ref{thm:2}. Making use of \eqref{recurrence_3} and \eqref{recurrence_4}, but this time, with $p_{-1}=0$, we see that for each $i, j \in \{1,2,\ldots,\kappa-2\}$, when $G$ is strictly increasing,
\begin{align}
{}& d_{i,j}=0 \Longleftrightarrow \ell_{i,j}+w_{i,j}=1 \Longleftrightarrow G\left(p_{1}w_{j,i+1}+p_{0}w_{j,i}\right)=G\left(1-p_{1}\ell_{j,i+1}-p_{0}\ell_{j,i}\right)\nonumber\\
{}&\Longleftrightarrow p_{1}\left(w_{j,i+1}+\ell_{j,i+1}\right)+p_{0}\left(w_{j,i}+\ell_{j,i}\right)=1 \Longleftrightarrow d_{j,i+1}=d_{j,i}=0.\label{p_{-1}_draw_eq_1}
\end{align}
Interchanging the roles of $i$ and $j$ in \eqref{p_{-1}_draw_eq_1}, we obtain
\begin{equation}
d_{j,i}=0 \Longleftrightarrow d_{i,j+1}=d_{i,j}=0.\label{p_{-1}_draw_eq_2}
\end{equation}
Combining \eqref{p_{-1}_draw_eq_1} and \eqref{p_{-1}_draw_eq_2}, we conclude that $d_{i,j}=0$ if and only if $d_{j,i}=0$, and $d_{i,j}=0 \implies d_{j,i}=0 \implies d_{i,j+1}=0$. Moreover, $d_{j,i+1}=0$ if and only if $d_{i+1,j}=0$, which tells us, via \eqref{p_{-1}_draw_eq_1}, that $d_{i,j}=0$ implies $d_{i+1,j}=0$. 

If $i=\kappa-1$ while $j \in \{1,2,\ldots,\kappa-2\}$, we similarly argue that $d_{\kappa-1,j}=0$ if and only if $d_{j,\kappa-1}=0$, and $d_{\kappa-1,j}=0$ implies that $d_{\kappa-1,j+1}=0$, Likewise, if $j=\kappa-1$ while $i \in \{1,2,\ldots,\kappa-2\}$, we deduce that if $d_{i,\kappa-1}=0$, then $d_{i+1,\kappa-1}=0$. This concludes the proof of \eqref{draw_2} of Theorem~\ref{thm:2}.

The proof of part \eqref{draw_3} of Theorem~\ref{thm:2} is accomplished exactly the same way as that of \eqref{draw_2}.

To prove part \eqref{draw_4} of Theorem~\ref{thm:2}, we, once again, make use of \eqref{recurrence_3} and \eqref{recurrence_4}, but with $p_{0}=0$. Consequently, we obtain:
\begin{align}
{}&d_{i,j}=0 \Longleftrightarrow \ell_{i,j}+w_{i,j}=0 \Longleftrightarrow G\left(p_{1}w_{j,i+1}+p_{-1}w_{j,i-1}\right)=G\left(1-p_{1}\ell_{j,i+1}-p_{-1}\ell_{j,i-1}\right)\nonumber\\
\Longleftrightarrow{}& p_{1}\left(w_{j,i+1}+\ell_{j,i+1}\right)+p_{-1}\left(w_{j,i-1}+\ell_{j,i-1}\right)=1 \Longleftrightarrow d_{j,i+1}=d_{j,i-1}=0.\label{p_{0}_draw_eq_1}
\end{align} 
Via a similar argument, we see that 
\begin{equation}
d_{j,i+1}=0 \Longleftrightarrow d_{i+1,j-1}=d_{i+1,j+1}=0 \quad \text{and} \quad d_{j,i-1}=0 \Longleftrightarrow d_{i-1,j-1}=d_{i-1,j+1}=0.\label{p_{0}_draw_eq_2}
\end{equation}
Combining \eqref{p_{0}_draw_eq_1} and \eqref{p_{0}_draw_eq_2}, we find that $d_{i,j}=0$ implies that $d_{i-1,j-1}=d_{i-1,j+1}=d_{i+1,j-1}=d_{i+1,j+1}=0$. On the other hand, we deduce from \eqref{p_{0}_draw_eq_1} that
\begin{equation}
d_{i-1,j-1}=0 \implies d_{j-1,i}=0 \implies d_{i,j}=0,\nonumber
\end{equation}
which, combined with our previous finding, shows that $d_{i,j}=0$ if and only if $d_{i-1,j-1}=0$. Likewise, we can show that 
\begin{enumerate*}
\item $d_{i,j}=0$ if and only if $d_{i-1,j+1}=0$, 
\item $d_{i,j}=0$ if and only if $d_{i+1,j-1}=0$,
\item and $d_{i,j}=0$ if and only if $d_{i+1,j+1}=0$.
\end{enumerate*}
Having established these, let us consider pairs $(i,j)$ and $(i+r,j+s)$, both in $\{1,2,\ldots,\kappa-1\}^{2}$, such that $r$ and $s$ have the same parity. Without loss of generality, let us assume that $r$ and $s$ are both positive (the argument would be nearly identical if one or both of them is / are non-positive), and let us assume that $r < s$, with $(s-r)=2t$ for some $t \geqslant 0$. Using our deductions above, we know that
\begin{align}
d_{i,j}=0 \Longleftrightarrow d_{i+1,j+1}=0 \Longleftrightarrow \cdots \Longleftrightarrow d_{i+r,j+r}=0,\nonumber
\end{align}
following which we have
\begin{multline}
d_{i+r,j+r}=0 \Longleftrightarrow d_{i+r-1,j+r+1}=0 \Longleftrightarrow d_{i+r,j+r+2}=0 \Longleftrightarrow \cdots \Longleftrightarrow d_{i+r-1,j+r+2t-1}=0 \\ \Longleftrightarrow d_{i+r,j+r+2t}=d_{i+r,j+s}=0.\nonumber
\end{multline}
This completes the proof of \eqref{draw_4} of Theorem~\ref{thm:2}.

\subsection{Proof of Theorem~\ref{thm:3}}\label{sec:proof_thm_3}
We begin by fixing an $\epsilon > 0$ whose value we determine later. Recall that, for any $i,j \in \{1,2,\ldots,\kappa-1\}$, the random variable $\mathcal{T}_{i,j}$ indicates the (random) duration of the game $\mathcal{G}(i,j,\kappa,T)[\phi]$, where $\phi$ is the root of our GW tree $T$. Also recall, from \S\ref{subsec:notations_definitions}, that $D_{i,j}^{(n)}(T)$ indicates the (random) subset of $V(T)$ comprising all those vertices $v$ such that $\mathcal{G}(i,j,\kappa,T)[v]$ lasts for at least $n$ rounds, and $d_{i,j}^{(n)}$ is the probability of the event that the root $\phi$ belongs to $D_{i,j}^{(n)}(T)$. It is evident that under the assumption that $d_{i,j}=0$, $\mathcal{T}_{i,j}$ is finite almost surely, and takes only non-negative integer values. We thus have
\begin{align}
\E[\mathcal{T}_{i,j}] ={}& \sum_{n=1}^{\infty}\Prob[\mathcal{T}_{i,j} \geqslant n] = \sum_{n=1}^{\infty}\Prob\left[\phi \in D_{i,j}^{(n)}(T)\right] = \sum_{n=1}^{\infty}d_{i,j}^{(n)} = \sum_{n=1}^{\infty}\left\{1-w_{i,j}^{(n)}-\ell_{i,j}^{(n)}\right\}\nonumber\\
={}& \sum_{n=1}^{\infty}\left\{w_{i,j}+\ell_{i,j}-w_{i,j}^{(n)}-\ell_{i,j}^{(n)}\right\}, \text{ since } d_{i,j}=0 \Longleftrightarrow w_{i,j}+\ell_{i,j}=1;\nonumber\\
={}& \sum_{n=1}^{\infty}\left(w_{i,j}-w_{i,j}^{(n)}\right)+\sum_{n=1}^{\infty}\left(\ell_{i,j}-\ell_{i,j}^{(n)}\right),\label{exp_time_1}
\end{align}
where, in the last step, we are able to rearrange the summands because each $w_{i,j}-w_{i,j}^{(n)}$ and each $\ell_{i,j}-\ell_{i,j}^{(n)}$ is non-negative. 

In what follows, in order to make the exposition less cluttered, we adopt the following abbreviations, in addition to \eqref{alpha} and \eqref{beta}:
\begin{align}
{}&\alpha_{i,j}^{(n)} = p_{-1}w_{j,i-1}^{(n)}+p_{0}w_{j,i}^{(n)}+p_{1}w_{j,i+1}^{(n)},\nonumber\\
{}&\beta_{i,j}^{(n)} = p_{-1}\left(1-\ell_{j,i-1}^{(n)}\right)+p_{0}\left(1-\ell_{j,i}^{(n)}\right)+p_{1}\left(1-\ell_{j,i+1}^{(n)}\right),\nonumber
\end{align}
for each $i,j \in \{1,2,\ldots,\kappa-1\}$ and for each $n \in \mathbb{N}$. Note that, when $i,j \in \{2,\ldots,\kappa-2\}$, we have, from \eqref{recurrence_2} and \eqref{recurrence_4} and using the notations introduced above, for each $n \in \mathbb{N}_{0}$,
\begin{align}
w_{i,j}-w_{i,j}^{(n+1)}={}&G\left(\beta_{i,j}^{(n)}\right)-G\left(\beta_{i,j}\right) = G'(\xi)\left\{p_{1}\left(\ell_{j,i+1}-\ell_{j,i+1}^{(n)}\right)+p_{0}\left(\ell_{j,i}-\ell_{j,i}^{(n)}\right)+p_{-1}\left(\ell_{j,i-1}-\ell_{j,i-1}^{(n)}\right)\right\},\label{intermediate_1}
\end{align}
where the last step is obtained by the use of the mean value theorem on the differentiable pgf $G$, for some $\beta_{i,j} < \xi < \beta_{i,j}^{(n)}$. Since $G$ is a pgf, $G'$ is monotonically increasing, so that we have
\begin{equation}
G'\left(\beta_{i,j}\right) \leqslant G'(\xi) \leqslant G'\left(\beta_{i,j}^{(n)}\right).\label{intermediate_2}  
\end{equation} 
Since, by \eqref{eq:limits} and by the continuity of $G'$, we have $\lim_{n \rightarrow \infty}G'\left(\beta_{i,j}^{(n)}\right) = G'\left(\beta_{i,j}\right)$, hence, for $\epsilon$ fixed at the very outset of \S\ref{sec:proof_thm_3}, there exists $N_{i,j} \in \mathbb{N}$ such that for all $n \geqslant N_{i,j}$, we have
\begin{equation}
G'\left(\beta_{i,j}^{(n)}\right) \leqslant (1+\epsilon)G'\left(\beta_{i,j}\right).\label{intermediate_3}
\end{equation}
From \eqref{intermediate_1}, \eqref{intermediate_2} and \eqref{intermediate_3}, we obtain, for $n \geqslant N_{i,j}$,
\begin{align}
w_{i,j}-w_{i,j}^{(n+1)} \leqslant{}& (1+\epsilon) G'\left(\beta_{i,j}\right)\left\{p_{1}\left(\ell_{j,i+1}-\ell_{j,i+1}^{(n)}\right)+p_{0}\left(\ell_{j,i}-\ell_{j,i}^{(n)}\right)+p_{-1}\left(\ell_{j,i-1}-\ell_{j,i-1}^{(n)}\right)\right\}.\label{intermediate_4}
\end{align}
It is worthwhile to note here that when $i=1$ and $j \in \{1,2,\ldots,\kappa-1\}$, the inequality in \eqref{intermediate_4} is replaced by
\begin{equation}
w_{1,j}-w_{1,j}^{(n+1)} \leqslant (1+\epsilon) G'\left(\beta_{1,j}\right)\left\{p_{1}\left(\ell_{j,2}-\ell_{j,2}^{(n)}\right)+p_{0}\left(\ell_{j,1}-\ell_{j,1}^{(n)}\right)\right\}\label{intermediate_5}
\end{equation}
for all $n \geqslant N_{1,j}$, and when $i=\kappa-1$ while $j \in \{1,2,\ldots,\kappa-1\}$, the inequality in \eqref{intermediate_4} morphs into
\begin{align}
w_{\kappa-1,j}-w_{\kappa-1,j}^{(n+1)} \leqslant{}& (1+\epsilon) G'\left(\beta_{\kappa-1,j}\right)\left\{p_{0}\left(\ell_{j,\kappa-1}-\ell_{j,\kappa-1}^{(n)}\right)+p_{-1}\left(\ell_{j,\kappa-2}-\ell_{j,\kappa-2}^{(n)}\right)\right\}\label{intermediate_6}
\end{align}
for all $n \geqslant N_{\kappa-1,j}$. On the other hand, making use of \eqref{recurrence_1} and \eqref{recurrence_3}, and using the notations introduced above, we obtain
\begin{align}
\ell_{i,j}-\ell_{i,j}^{(n+1)} ={}& G\left(\alpha_{i,j}\right)-G\left(\alpha_{i,j}^{(n)}\right) = G'(\zeta)\left\{p_{-1}\left(w_{j,i-1}-w_{j,i-1}^{(n)}\right)+p_{0}\left(w_{j,i}-w_{j,i}^{(n)}\right)+p_{1}\left(w_{j,i+1}-w_{j,i+1}^{(n)}\right)\right\}\nonumber
\end{align}
for some $\zeta$ with $\alpha_{i,j}^{(n)} < \zeta < \alpha_{i,j}$, by the mean value theorem. As above, using the monotonically increasing nature of $G'$, we can then deduce the bound
\begin{equation}
\ell_{i,j}-\ell_{i,j}^{(n+1)} \leqslant G'\left(\alpha_{i,j}\right)\left\{p_{-1}\left(w_{j,i-1}-w_{j,i-1}^{(n)}\right)+p_{0}\left(w_{j,i}-w_{j,i}^{(n)}\right)+p_{1}\left(w_{j,i+1}-w_{j,i+1}^{(n)}\right)\right\}.\label{intermediate_7}
\end{equation}
Once again, two special cases are also to be noted: when $i=1$ and $j \in \{1,2,\ldots,\kappa-1\}$, we have
\begin{equation}
\ell_{1,j}-\ell_{1,j}^{(n+1)} \leqslant G'\left(\alpha_{1,j}\right)\left\{p_{0}\left(w_{j,1}-w_{j,1}^{(n)}\right)+p_{1}\left(w_{j,2}-w_{j,2}^{(n)}\right)\right\},\label{intermediate_8}
\end{equation}
and when $i=\kappa-1$ and $j \in \{1,2,\ldots,\kappa-1\}$, we have
\begin{equation}
\ell_{\kappa-1,j}-\ell_{\kappa-1,j}^{(n+1)} \leqslant G'\left(\alpha_{\kappa-1,j}\right)\left\{p_{-1}\left(w_{j,\kappa-2}-w_{j,\kappa-2}^{(n)}\right)+p_{0}\left(w_{j,\kappa-1}-w_{j,\kappa-1}^{(n)}\right)\right\}.\label{intermediate_9}
\end{equation}

From \eqref{intermediate_4}, \eqref{intermediate_5}, \eqref{intermediate_6}, \eqref{intermediate_7}, \eqref{intermediate_8} and \eqref{intermediate_9}, we see that the term $\left(w_{i,j}-w_{i,j}^{(n-1)}\right)$ appears
\begin{enumerate}
\item with coefficient $(1+\epsilon)G'\left(\beta_{i-1,j-1}\right)p_{1}G'\left(\alpha_{j-1,i}\right)p_{1}$ in the upper bound for $\left(w_{i-1,j-1}-w_{i-1,j-1}^{(n+1)}\right)$,
\item with coefficient $(1+\epsilon)G'\left(\beta_{i-1,j}\right)p_{1}G'\left(\alpha_{j,i}\right)p_{0}$ in the upper bound for $\left(w_{i-1,j}-w_{i-1,j}^{(n+1)}\right)$,
\item with coefficient $(1+\epsilon)G'\left(\beta_{i-1,j+1}\right)p_{1}G'\left(\alpha_{j+1,i}\right)p_{-1}$ in the upper bound for $\left(w_{i-1,j+1}-w_{i-1,j+1}^{(n+1)}\right)$,
\item with coefficient $(1+\epsilon)G'\left(\beta_{i,j-1}\right)p_{0}G'\left(\alpha_{j-1,i}\right)p_{1}$ in the upper bound for $\left(w_{i,j-1}-w_{i,j-1}^{(n+1)}\right)$,
\item with coefficient $(1+\epsilon)G'\left(\beta_{i,j}\right)p_{0}G'\left(\alpha_{j,i}\right)p_{0}$ in the upper bound for $\left(w_{i,j}-w_{i,j}^{(n+1)}\right)$,
\item with coefficient $(1+\epsilon)G'\left(\beta_{i,j+1}\right)p_{0}G'\left(\alpha_{j+1,i}\right)p_{-1}$ in the upper bound for $\left(w_{i,j+1}-w_{i,j+1}^{(n+1)}\right)$,
\item with coefficient $(1+\epsilon)G'\left(\beta_{i+1,j-1}\right)p_{-1}G'\left(\alpha_{j-1,i}\right)p_{1}$ in the upper bound for $\left(w_{i+1,j-1}-w_{i+1,j-1}^{(n+1)}\right)$,
\item with coefficient $(1+\epsilon)G'\left(\beta_{i+1,j}\right)p_{-1}G'\left(\alpha_{j,i}\right)p_{0}$ in the upper bound for $\left(w_{i+1,j}-w_{i+1,j}^{(n+1)}\right)$,
\item with coefficient $(1+\epsilon)G'\left(\beta_{i+1,j+1}\right)p_{-1}G'\left(\alpha_{j+1,i}\right)p_{-1}$ in the upper bound for $\left(w_{i+1,j+1}-w_{i+1,j+1}^{(n+1)}\right)$.
\end{enumerate}
At this point, we look for positive constants $\gamma_{(i,j)}$, for all $i,j \in \{1,2,\ldots,\kappa-1\}$, and $\lambda \in (0,1)$, such that
\begin{align}
\lambda \gamma_{(i,j)} = \sum_{s,t \in \{1,2,\ldots,\kappa-1\}}C_{(s,t)}^{(i,j)}\gamma_{(s,t)}\label{intermediate_10}
\end{align}
where, for each $s, t \in \{1,2,\ldots,\kappa-1\}$, the constants $C_{(s,t)}^{(i,j)}$ are defined exactly as in \eqref{intermediate_11}. The intuition behind \eqref{intermediate_10} is as follows. If we consider the linear combination 
\begin{equation}
\sum_{i,j \in \{1,2,\ldots,\kappa-1\}}\gamma_{(i,j)}\left(w_{i,j}-w_{i,j}^{(n+1)}\right),\label{lin_comb}
\end{equation}
then it will be bounded above by
\begin{equation}
(1+\epsilon)\sum_{i,j \in \{1,2,\ldots,\kappa-1\}}\left(\sum_{s,t \in \{1,2,\ldots,\kappa-1\}}C_{(s,t)}^{(i,j)}\gamma_{(s,t)}\right)\left(w_{i,j}-w_{i,j}^{(n-1)}\right),\nonumber
\end{equation}
a fact that is obtained from the coefficients of $\left(w_{i,j}-w_{i,j}^{(n+1)}\right)$ enumerated above in the upper bounds for the various summands of \eqref{lin_comb}. If \eqref{intermediate_10} holds, then we are able to conclude that
\begin{equation}
\sum_{i,j \in \{1,2,\ldots,\kappa-1\}}\gamma_{i,j}\left(w_{i,j}-w_{i,j}^{(n+1)}\right) \leqslant (1+\epsilon)\lambda\sum_{i,j \in \{1,2,\ldots,\kappa-1\}}\gamma_{i,j}\left(w_{i,j}-w_{i,j}^{(n-1)}\right).\label{intermediate_12}
\end{equation}
If $(1+\epsilon)\lambda \in (0,1)$, then we see that each of the sequences $\left\{\sum_{i,j \in \{1,2,\ldots,\kappa-1\}}\gamma_{i,j}\left(w_{i,j}-w_{i,j}^{(2n)}\right)\right\}_{n \in \mathbb{N}_{0}}$ and $\left\{\sum_{i,j \in \{1,2,\ldots,\kappa-1\}}\gamma_{i,j}\left(w_{i,j}-w_{i,j}^{(2n+1)}\right)\right\}_{n \in \mathbb{N}_{0}}$ decays geometrically, and convergence will be ensured. To this end, we consider a $(\kappa-1)^{2} \times (\kappa-1)^{2}$ matrix $\mathcal{C}$ in which the rows and columns are indexed by $(s,t)$, for $s,t \in \{1,2,\ldots,\kappa-1\}$ (thus, the first row is indexed $(1,1)$, the second row is indexed $(1,2)$, and so on, and the last row is indexed $(\kappa-1,\kappa-1)$; likewise, the first column is indexed $(1,1)$, the second column is indexed $(1,2)$, and so on, and the last column is indexed $(\kappa-1,\kappa-1)$). The entry in the $(u,v)$-th row and $(s,t)$-th column is given by $C_{(s,t)}^{(u,v)}$, for each $s,t,u,v \in \{1,2,\ldots,\kappa-1\}$. Let $\Gamma$ be the $(\kappa-1)^{2}\times 1$ column vector in which the rows are indexed by $(s,t)$, for $s,t \in \{1,2,\ldots,\kappa-1\}$ (thus, the first row indexed by $(1,1)$, the second $(1,2)$ and so on, and the last row indexed by $(\kappa-1,\kappa-1)$), and the entry in the $(s,t)$-th row is $\gamma_{(s,t)}$. With these notations, it is evident that the expression in the right side of \eqref{intermediate_10} is equal to the entry in the $(i,j)$-th row of $\mathcal{C}\Gamma$. If \eqref{intermediate_10} holds, it would be equivalent to the identity $\mathcal{C}\Gamma = \lambda \Gamma$. This would make $\lambda$ an eigenvalue of $\mathcal{C}$ and $\Gamma$ a corresponding eigenvector. 

Recall that a non-negative square matrix (i.e.\ each entry is non-negative) $A$, of dimension $n \times n$, is called \emph{irreducible} if for each pair $(i,j)$, where $i,j \in \{1,2,\ldots,n\}$, there exists some $k \in \mathbb{N}$ such that the $(i,j)$-th entry of $A^{k}$ is strictly positive. If we form a graph $G(A)$ on the vertex set $\{1,2,\ldots,n\}$ such that there is an edge between vertices $i$ and $j$ if and only if the $(i,j)$-th entry of $A$ is strictly positive, then the above amounts to saying that, for each pair $(i,j)$ with $i,j \in \{1,2,\ldots,n\}$, there exists some $k \in \mathbb{N}$ such that there exists a path of length that begins in $i$ and ends in $j$ (i.e.\ that one can find vertices $\ell_{1}, \ell_{2}, \ldots, \ell_{k-1} \in \{1,2,\ldots,n\}$ such that $i$ and $\ell_{1}$ are adjacent, $\ell_{1}$ and $\ell_{2}$ are adjacent, $\ldots$, $\ell_{k-1}$ and $j$ are adjacent).

\sloppy Next, recall the Perron-Frobenius Theorem for non-negative, irreducible matrices (we only state the part of it that is necessary for this paper):
\begin{theorem}\label{thm:perron_frobenius}
Let $A$ be a non-negative, irreducible matrix. Then it has a right eigenvector $v$ each of whose entries is strictly positive, and the corresponding eigenvalue of $A$ is equal to the spectral radius of $A$. 
\end{theorem}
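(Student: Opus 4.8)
The plan is to derive Theorem~\ref{thm:perron_frobenius} from Brouwer's fixed point theorem (already invoked in the proof of Theorem~\ref{thm:1} via \cite{brouwer1911abbildung}), in three stages: first manufacture a nonnegative right eigenvector carrying a strictly positive eigenvalue, then promote nonnegativity to strict positivity using irreducibility, and finally identify the eigenvalue with the spectral radius $\rho(A)$. Let $n$ be the order of $A$; the irreducibility of $A$ (in the form recorded just before the theorem, that for each ordered pair some power $A^{k}$ has a positive entry) immediately forces every column of $A$ to contain a strictly positive entry. First I would work on the standard simplex $\Delta = \{x \in \mathbb{R}^{n} : x_{i} \geqslant 0 \text{ for all } i, \ \sum_{i=1}^{n}x_{i}=1\}$, which is compact and convex. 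For any $x \in \Delta$ some coordinate $x_{j_{0}} > 0$, and choosing $i_{0}$ with $A_{i_{0},j_{0}} > 0$ gives $(Ax)_{i_{0}} \geqslant A_{i_{0},j_{0}}x_{j_{0}} > 0$; hence $Ax \neq \mathbf{0}$ on $\Delta$, and $x \mapsto Ax/\lVert Ax\rVert_{1}$ is a well-defined continuous self-map of $\Delta$. Brouwer's fixed point theorem then yields $v \in \Delta$ with $Av = \lambda v$, where $\lambda = \lVert Av\rVert_{1} > 0$, $v \geqslant \mathbf{0}$ and $v \neq \mathbf{0}$.

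To upgrade $v$ to a strictly positive vector, I would invoke the standard consequence of irreducibility that $(I+A)^{n-1}$ is entrywise strictly positive (for $n=1$ the conclusion $v>\mathbf{0}$ is immediate, so take $n\geqslant 2$). Since $Av=\lambda v$, the vector $v$ is also an eigenvector of $(I+A)^{n-1}$, with eigenvalue $(1+\lambda)^{n-1}$; thus $(1+\lambda)^{n-1}v = (I+A)^{n-1}v$ is the product of a strictly positive matrix with a nonnegative nonzero vector and is therefore strictly positive. As $(1+\lambda)^{n-1} > 0$, this forces $v > \mathbf{0}$ entrywise.

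It then remains to show $\lambda = \rho(A)$. Applying the identical construction to $A^{T}$, which is again nonnegative and irreducible, produces a strictly positive left eigenvector $u > \mathbf{0}$ with $u^{T}A = \lambda' u^{T}$ for some $\lambda' > 0$. Evaluating $u^{T}Av$ in two ways gives $\lambda(u^{T}v) = \lambda'(u^{T}v)$, and since $u, v > \mathbf{0}$ we have $u^{T}v > 0$, whence $\lambda = \lambda'$. Finally, for any eigenvalue $\mu \in \mathbb{C}$ of $A$ with eigenvector $w$, taking entrywise absolute values and using $A \geqslant 0$ yields $\lvert \mu\rvert\,\lvert w\rvert = \lvert Aw\rvert \preceq A\lvert w\rvert$; left-multiplying by $u^{T} > \mathbf{0}$ gives $\lvert \mu\rvert\,u^{T}\lvert w\rvert \leqslant u^{T}A\lvert w\rvert = \lambda'\,u^{T}\lvert w\rvert$, and $u^{T}\lvert w\rvert > 0$ forces $\lvert \mu\rvert \leqslant \lambda' = \lambda$. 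Hence $\lambda$ dominates the modulus of every eigenvalue, i.e.\ $\lambda = \rho(A)$, as desired.

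The main obstacle I anticipate is the strict-positivity step: converting the merely nonnegative eigenvector handed over by Brouwer into an entrywise positive one is precisely the point at which irreducibility is indispensable, and it is cleanest to route it through the positivity of $(I+A)^{n-1}$ rather than through a direct support-invariance argument. By comparison, the identification $\lambda = \rho(A)$ is routine once a strictly positive left eigenvector is in hand, and the well-definedness of the Brouwer map, while elementary, also quietly relies on irreducibility through the positivity of each column of $A$.
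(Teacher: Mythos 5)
Your proof is correct. Note first that the paper does not actually prove Theorem~\ref{thm:perron_frobenius}: it is explicitly \emph{recalled} as a classical result (with \cite{perron1907theorie} cited only for the accompanying row-sum bound \eqref{spec_radius_bound}), so there is no in-paper argument to compare against. What you have supplied is the standard Brouwer-based derivation of the Perron--Frobenius theorem, and every step checks out: a zero column would force the corresponding column of every power $A^{k}$ to vanish, contradicting irreducibility, so the normalized map $x \mapsto Ax/\lVert Ax\rVert_{1}$ is indeed a well-defined continuous self-map of the simplex; Brouwer then yields a nonnegative eigenvector with eigenvalue $\lambda = \lVert Av\rVert_{1} > 0$; irreducibility of $A^{T}$ follows from $((A^{T})^{k})_{i,j} = (A^{k})_{j,i}$, so the left-eigenvector construction is legitimate; and the chain $\lvert\mu\rvert\,u^{T}\lvert w\rvert \leqslant u^{T}A\lvert w\rvert = \lambda u^{T}\lvert w\rvert$ correctly identifies $\lambda$ with $\rho(A)$. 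This choice of route is also aesthetically consistent with the paper, which already invokes Brouwer's fixed point theorem in the proof of Theorem~\ref{thm:1}. The one place where you lean on an unproved ingredient is the strict positivity of $(I+A)^{n-1}$; this is standard (the $(i,j)$ entry of $\sum_{k=0}^{n-1}\binom{n-1}{k}A^{k}$ is positive precisely when there is a path of length at most $n-1$ from $i$ to $j$, and a minimal path in a digraph on $n$ vertices has length at most $n-1$), but since it is exactly the point at which irreducibility does its work, a one-line justification of that combinatorial fact would make the argument fully self-contained.
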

Moreover, \cite{perron1907theorie} shows that the spectral radius $\rho(A)$ is bounded above by $\max\left\{R_{i}(A): i \in \{1,2,\ldots,n\}\right\}$, where $R_{i}(A)$ is the $i$-th row-sum of $A$, i.e.\ $R_{i}(A) = \sum_{j=1}^{n}A_{i,j}$, where $A_{i,j}$ is the $(i,j)$-th entry of $A$. In other words, 
\begin{equation}
\rho(A) \leqslant \max\left\{R_{i}(A): i \in \{1,2,\ldots,n\}\right\}.\label{spec_radius_bound}
\end{equation}

Note that $\mathcal{C}$ is a non-negative matrix. It is, moreover, an \emph{irreducible} matrix, because of the following reason: let us consider $(i,j)$ and $(i',j')$, where $i,j,i',j' \in \{1,2,\ldots,\kappa-1\}$. Let $i'=i+t_{1}$ and $j'=j+t_{2}$, where $t_{1}, t_{2} \in \mathbb{Z}$. Without loss of generality, let us assume that $t_{1} \geqslant 0$ whereas $t_{2} \leqslant 0$ (the other three cases would be analogous). Since it has been assumed that each of $p_{-1}$, $p_{0}$ and $p_{1}$ is strictly positive, we conclude, from \eqref{intermediate_11}, that each of $C_{(i+1,j)}^{(i,j)}$, $C_{(i+2,j)}^{(i+1,j)}$, $\ldots$, $C_{(i+t_{1},j)}^{(i+t_{1}-1,j)}$ is strictly positive, implying that there is a path from the vertex $(i,j)$ to the vertex $(i+t_{1},j)$ in the graph $G(\mathcal{C})$. Likewise, each of $C_{(i+t_{1},j-1)}^{(i+t_{1},j)}$, $C_{(i+t_{1},j-2)}^{(i+t_{1},j-1)}$, $\ldots$, $C_{(i+t_{1},j+t_{2})}^{(i+t_{1},j-t_{2}+1)}$ is strictly positive, implying that there is a path from the vertex $(i+t_{1},j)$ to $(i+t_{1},j+t_{2})$ in the graph $G(\mathcal{C})$. This completes the proof of the assertion that $\mathcal{C}$ is irreducible.

Consequently, Theorem~\ref{thm:perron_frobenius} as well as \eqref{spec_radius_bound} are applicable to our set-up. In \eqref{intermediate_10}, we then want $\lambda$ to be the spectral radius of $\mathcal{C}$, and the vector $\Gamma$ to be the (normalized, i.e.\ of unit Euclidean norm) eigenvector of $\mathcal{C}$ corresponding to $\lambda$. This ensures that each coordinate of $\Gamma$, i.e.\ each $\gamma_{i,j}$, is strictly positive, as intended. Moreover, \eqref{thm:3:eq} implies that 
\begin{align}
\max\left\{R_{(i,j)}(\mathcal{C}): (i,j) \in \{1,2,\ldots,\kappa-1\}^{2}\right\} < 1,\nonumber
\end{align}
where $R_{(i,j)}(\mathcal{C})$ is the sum of the entries in the $(i,j)$-th row of $\mathcal{C}$, so that an application of \eqref{spec_radius_bound} yields $\lambda < 1$, as desired. The time has now come to specify the value of $\epsilon$, as promised at the beginning of \S\ref{sec:proof_thm_3}: we choose $\epsilon$ such that $(1+\epsilon)\lambda < 1$ (this is possible since \eqref{thm:3:eq} and \eqref{spec_radius_bound} together ensure that $\lambda$ is \emph{strictly} less than $1$). As asserted above, we then obtain a geometric decay in each of the sequences $\left\{\sum_{i,j \in \{1,2,\ldots,\kappa-1\}}\gamma_{i,j}\left(w_{i,j}-w_{i,j}^{(2n)}\right)\right\}_{n \in \mathbb{N}_{0}}$ and $\left\{\sum_{i,j \in \{1,2,\ldots,\kappa-1\}}\gamma_{i,j}\left(w_{i,j}-w_{i,j}^{(2n+1)}\right)\right\}_{n \in \mathbb{N}_{0}}$, making them summable. Since each $\gamma_{i,j}$ is strictly positive, this conclusion further implies that each of the sequences $\left\{\left(w_{i,j}-w_{i,j}^{(2n)}\right)\right\}_{n \in \mathbb{N}_{0}}$ and $\left\{\left(w_{i,j}-w_{i,j}^{(2n+1)}\right)\right\}_{n \in \mathbb{N}_{0}}$ is summable for \emph{every} $i,j \in \{1,2,\ldots,\kappa-1\}$, so that the sum $\sum_{n=1}^{\infty}\left(w_{i,j}-w_{i,j}^{(n)}\right)$ is finite for \emph{every} $i,j \in \{1,2,\ldots,\kappa-1\}$. This, along with the bounds in \eqref{intermediate_7}, \eqref{intermediate_8} and \eqref{intermediate_9}, ensures that the sum $\sum_{n=1}^{\infty}\left(\ell_{i,j}-\ell_{i,j}^{(n)}\right)$, too, is finite for \emph{every} $i,j \in \{1,2,\ldots,\kappa-1\}$. Therefore, from \eqref{exp_time_1}, we conclude that $\E[\mathcal{T}_{i,j}]$ is finite for \emph{every} $i,j \in \{1,2,\ldots,\kappa-1\}$ when \eqref{thm:3:eq} holds.
 
\subsection{Proof of Theorem~\ref{thm:kappa=2}}\label{sec:kappa=2_proofs}
When $\kappa=2$, \eqref{recurrence_3} and \eqref{recurrence_4} boil down to $\ell_{1,1}=G\left(p_{-1}+p_{0}w_{1,1}\right)$ and $w_{1,1}=1-G\left(1-p_{0}\ell_{1,1}-p_{1}\right)$. Setting $w'=p_{-1}+p_{0}w_{1,1}$ and $\ell'=1-p_{1}-p_{0}\ell_{1,1}$, we see that $w'$ is the smallest fixed point and $\ell'$ is the largest fixed point of the function
\begin{equation}
h(x)=1-p_{1}-p_{0}G\left\{1-p_{1}-p_{0}G(x)\right\}=g\big(g(x)\big), \quad \text{where}\quad g(x)=1-p_{1}-p_{0}G(x),\label{g_h_defn_kappa=2}
\end{equation}
in the interval $[0,1]$ (in fact, we have $p_{-1} \leqslant w' \leqslant \ell' \leqslant 1-p_{1}$, and there lies no fixed point of $h$ in the intervals $[0,p_{-1})$ and $(1-p_{1},1]$), and the probability $d_{1,1}$ of draw equals $0$ if and only if $h$ has a unique fixed point in the interval $[p_{-1},1-p_{1}]$.

Since each of these functions involves a single variable, analyzing them and seeking their fixed points in a specified interval is an easier task to accomplish (compared to when we consider higher values of $\kappa$). As stated in Theorem~\ref{thm:kappa=2}, we now focus on \emph{specific} offspring distributions $\chi$, which also specifies the form of the pgf $G$. We begin by stating and proving a lemma that will be useful for each of \S\ref{subsec:binomial}, \S\ref{subsec:Poisson}, \S\ref{subsec:neg_bin} and \S\ref{subsec:0,d}:
\begin{lemma}\label{lem:max_derivative_leq_1}
As long as the derivative $h'(x)$ of the function $h$, defined in \eqref{g_h_defn_kappa=2}, is bounded above by $1$ for all $x \in [0,1]$, the function $h$ has a unique fixed point in $[0,1]$.
\end{lemma}
\begin{proof}
Assuming $p_{1}, p_{0} \in [0,1)$, we have $h(0) > 0$, i.e.\ the curve $y=h(x)$ lies above $y=x$ at $x=0$. Since $w'$ is the smallest positive fixed point of $h$, it must be the case that the curve $y=h(x)$ travels from \emph{above} $y=x$ to \emph{beneath} $y=x$ at $x=\ell_{1,1}$. If there exists yet another fixed point $\beta \in (0,1)$ of $h$, then $w' < \beta$. Assuming that there exists no fixed point of $h$ in the sub-interval $(w',\beta)$, the curve $y=h(x)$ must travel from \emph{beneath} $y=x$ to \emph{above} $y=x$ at $x=\beta$, and therefore, the slope of $y=h(x)$ must exceed the slope of $y=x$ at $x=\beta$. But the slope of $y=x$ equals $1$, and the slope of $y=h(x)$ is bounded above by $1$ throughout $[0,1]$. This leads to a contradiction, implying that the fixed point $\beta$ cannot exist.
\end{proof}

\subsubsection{When the offspring distribution is Binomial}\label{subsec:binomial} Let $\chi$ be Binomial$(d,\pi)$ for some $d \geqslant 2$ and $\pi \in (0,1]$. We have $G(x)=\left\{(1-\pi)+\pi x\right\}^{d}$, which yields $G'(x)=\pi d \left\{(1-\pi)+\pi x\right\}^{d-1}$, so that from \eqref{g_h_defn_kappa=2}, we have:
\begin{align}
h'(x)={}&g'(g(x))g'(x)=p_{0}^{2}\pi^{2}d^{2}\left[(1-\pi p_{1})\left\{\pi x + (1-\pi)\right\} - \pi p_{0} \left\{\pi x + (1-\pi)\right\}^{d+1}\right]^{d-1}.\label{h'_binomial}
\end{align}
Let us define the function $f_{1}(x)=(1-\pi p_{1})\left\{\pi x + (1-\pi)\right\} - \pi p_{0} \left\{\pi x + (1-\pi)\right\}^{d+1}$, so that its derivative, $f'_{1}(x)=\pi (1-\pi p_{1}) - (d+1) \pi^{2} p_{0} \left\{\pi x + (1-\pi)\right\}^{d}$, is strictly positive if and only if $x < x_{0}$, where $x_{0}=\pi^{-1}\left(1 - \pi p_{1}\right)^{1/d}\left\{(d+1) \pi p_{0}\right\}^{-1/d} + 1 - \pi^{-1}$. This tells us that the maximum of the derivative $h'(x)$ is attained at $x=x_{0}$, and this maximum value equals $h'(x_{0})=p_{0}^{(d+1)/d}\pi^{(d+1)/d}d^{d+1}\left(1-\pi p_{1}\right)^{(d+1)(d-1)/d}(d+1)^{-(d-1)(d+1)/d}$. Consequently, $h'(x_{0}) \leqslant 1$, implying that the derivative of $h$ is bounded above \emph{everywhere} by $1$, if and only if 
\begin{equation}\label{h'_bounded_binomial}
p_{0}\pi\left(1-\pi p_{1}\right)^{d-1} \leqslant (d+1)^{d-1}d^{-d}.
\end{equation}
From Lemma~\ref{lem:max_derivative_leq_1}, we conclude that $h$ has a unique fixed point in $[0,1]$ whenever \eqref{h'_bounded_binomial} holds. 

Next, assuming $p_{0} > 0$, we show that $h(x_{0}) \leqslant x_{0}$, which allows us to conclude that $w' \leqslant x_{0}$ (since, in the interval $[0,w')$, the curve $y=h(x)$ lies \emph{above} the line $y=x$). A few careful algebraic manipulations reveal that the inequality $h(x_{0}) \leqslant x_{0}$ is true whenever 
\begin{align}
\pi p_{0} d^{d} \gamma^{-d} (d+1)^{-d} + \gamma (d+1)^{-1/d} \pi^{-1/d} p_{0}^{-1/d} \geqslant 1, \text{ where } \gamma = (1-\pi p_{1})^{-(d-1)/d}.\label{objective_binomial}
\end{align}
Setting $f_{2}(x) = \pi p_{0} d^{d} x^{-d} (d+1)^{-d} + x (d+1)^{-1/d} \pi^{-1/d} p_{0}^{-1/d}$, we see that $f_{2}$ is strictly decreasing on $0 \leqslant x < \pi^{1/d} p_{0}^{1/d} d (d+1)^{-(d-1)/d}$ and strictly increasing on $x > \pi^{1/d} p_{0}^{1/d} d (d+1)^{-(d-1)/d}$. Therefore, $f_{2}$ attains its minimum value at $x=\pi^{1/d} p_{0}^{1/d} d (d+1)^{-(d-1)/d}$, and it is straightforward to verify that this minimum value equals $1$, thus establishing that \eqref{objective_binomial} indeed holds for all values of $p_{1}$, $p_{0}$ and $p_{-1}$. Ultimately, this leads to the conclusion that $w' \leqslant x_{0}$ for all values of $p_{1}$, $p_{0}$ and $p_{-1}$.

Note that the function $g$, defined in \eqref{g_h_defn_kappa=2}, is strictly decreasing (as long as $p_{0} > 0$), with $g(0) > 0$ and $g(1) < 1$, so that $g$ must have a unique fixed point, say $\alpha$, in $[0,1]$, and $\alpha$ is a fixed point of $h$ as well. Our final task is to show that $x_{0} < \alpha$ whenever \eqref{h'_bounded_binomial} \emph{fails} to hold. We prove this by showing that $g(x_{0}) > x_{0}$ (following which, the conclusion can be drawn from the observation that the curve $y=g(x)$ lies above the line $y=x$ for $x \in [0,\alpha)$ and beneath the line $y=x$ for $x \in (\alpha,1]$). Straightforward algebraic manipulations reveal that the inequality $g(x_{0}) > x_{0}$ becomes equivalent to the inequality $\pi^{1/d} p_{0}^{1/d} (1-\pi p_{1})^{(d-1)/d} > d^{-1} (d+1)^{(d-1)/d}$, which is exactly the negation of the criterion in \eqref{h'_bounded_binomial}. Therefore, when \eqref{h'_bounded_binomial} does \emph{not} hold, we have $w' \leqslant x_{0} < \alpha$, asserting the existence of two distinct fixed points of $h$, namely $w'$ and $\alpha$, in $[0,1]$. 

The final conclusion, therefore, is that the probability of draw, $d_{1,1}$, is $0$ if and only if the inequality in \eqref{h'_bounded_binomial} is true. Note that, when $p_{0}=0$, the fate of the game is decided in its very first round, and $d_{1,1}$ must equal $0$ in that case. 

\subsubsection{When the offspring distribution is Poisson}\label{subsec:Poisson} Let $p_{0} > 0$ and $\chi$ be Poisson$(\lambda)$, so that $G(x)=e^{\lambda(x-1)}$, leading to $G'(x)=\lambda G(x)$. This, along with \eqref{g_h_defn_kappa=2}, yields
\begin{align}
h'(x)={}&p_{0}^{2} \lambda^{2}\exp\left\{\lambda(x-1)-\lambda p_{1} - \lambda p_{0} e^{\lambda(x-1)}\right\}.\nonumber
\end{align}
Setting $f_{1}(x)=\lambda(x-1)-\lambda p_{1} - \lambda p_{0} e^{\lambda(x-1)}$, we see that $f'_{1}(x) > 0$ if and only if $x < x_{0}$, where $x_{0}=1-\lambda^{-1}\log \lambda-\lambda^{-1}\log p_{0}$. The maximum value of $h'(x)$ is thus attained at $x=x_{0}$, and this maximum value equals $h'(x_{0})=p_{0} \lambda e^{-\lambda p_{1} - 1}$. Note that $h'(x_{0}) \leqslant 1$ if and only if 
\begin{equation}\label{h'_bounded_Poisson}
p_{0} \lambda e^{-\lambda p_{1}} \leqslant e,
\end{equation}\
so that, by Lemma~\ref{lem:max_derivative_leq_1}, the function $h$ has a unique fixed point in $[0,1]$ whenever \eqref{h'_bounded_Poisson} holds. 

As in \S\ref{subsec:binomial}, we now show that $w' \leqslant x_{0}$ by showing that $h(x_{0}) \leqslant x_{0}$, which is equivalent to the inequality $\log x \leqslant e^{-1}x$. Defining the function $f_{2}(x)=xe^{-1}-\log x$, we see that $f_{2}$ is strictly decreasing for $0 < x < e$ and strictly increasing for $x > e$, leading to its minimum value, attained at $x=e$, being equal to $0$. This proves that indeed, for all $p_{-1}$, $p_{0}$ and $p_{1}$, we have $w' \leqslant x_{0}$.

Next, we show that $x_{0} < \alpha$ whenever \eqref{h'_bounded_Poisson} does not hold, where recall, from \S\ref{subsec:binomial}, that $\alpha$ is the unique fixed point, in $[0,1]$, of the function $g$ defined in \eqref{g_h_defn_kappa=2}. We show this by proving that $g(x_{0}) > x_{0}$ (and making use of the fact that the curve $y=g(x)$ lies above the line $y=x$ for $x \in [0,\alpha)$, and beneath $y=x$ for $x \in (\alpha,1]$). Straightforward algebraic manipulations reveal that the inequality $g(x_{0}) > x_{0}$ is equivalent to $p_{0} \lambda e^{-\lambda p_{1}} > e$, which is precisely the negation of the inequality in \eqref{h'_bounded_Poisson}. This goes to show, much like in \S\ref{subsec:binomial}, that when \eqref{h'_bounded_Poisson} fails to hold, $w' \leqslant x_{0} < \alpha$, so that $h$ has two distinct fixed points, $w'$ and $\alpha$, in $[0,1]$. 

We conclude that the probability of draw, $d_{1,1}$, is $0$ if and only if \eqref{h'_bounded_Poisson} holds. When $p_{0}=0$, the game ends after the very first round, leading to $d_{1,1}=0$, and \eqref{h'_bounded_Poisson} holds in that case.

\subsubsection{When the offspring distribution is negative binomial}\label{subsec:neg_bin} Here, we consider the offspring distribution $\chi$ of our GW tree to be negative binomial$(r,\pi)$, i.e.\ where the number of children of any vertex of $T_{\chi}$ represents the total number of trials resulting in failures that are required in order to obtain precisely $r$ many successes, where $r \in \mathbb{N}$ is pre-specified. A trial here refers to that of a random experiment that either results in success with probability $\pi \in (0,1)$, or in failure with probability $1-\pi$, and the trials are assumed to be mutually independent. The pgf corresponding to $\chi$ is $G(x) = \pi^{r}\left\{1-(1-\pi)x\right\}^{-r}$, so that from \eqref{g_h_defn_kappa=2}, we obtain $g(x)=1-p_{1}-p_{0}\pi^{r}\left\{1-(1-\pi)x\right\}^{-r}$. This leads to
\begin{align}
h'(x)={}& r^{2}p_{0}^{2}(1-\pi)^{2}\pi^{2r} \left[\left(p_{1}+\pi-p_{1}\pi\right)\left\{1-(1-\pi)x\right\} + \frac{(1-\pi)p_{0}\pi^{r}}{\left\{1-(1-\pi)x\right\}^{(r-1)}}\right]^{-(r+1)},\label{derivative_g_{p,q,pi,r}^{(2)}_negative_binomial}
\end{align}
where $h$ is as defined in \eqref{g_h_defn_kappa=2}. We define the function $f_{1}(x) = \left(p_{1}+\pi-p_{1}\pi\right)\left\{1-(1-\pi)x\right\} + (1-\pi)p_{0}\pi^{r}\left\{1-(1-\pi)x\right\}^{-(r-1)}$. The derivative of $f_{1}$, given by $f'_{1}(x)=-\left(p_{1}+\pi-p_{1}\pi\right)(1-\pi) + (r-1)(1-\pi)^{2}p_{0}\pi^{r}\left\{1-(1-\pi)x\right\}^{-r}$, is strictly positive if and only if
\begin{align}
&x > x_{0}, \text{ where } x_{0} = \frac{1}{1-\pi}\left[1 - \frac{(r-1)^{\frac{1}{r}}(1-\pi)^{\frac{1}{r}}p_{0}^{\frac{1}{r}}\pi}{\left(p_{1}+\pi-p_{1}\pi\right)^{\frac{1}{r}}}\right].\nonumber
\end{align}
Consequently, the maximum of $h'(x)$ is attained at $x=x_{0}$, and this maximum value equals
\begin{align}
h'(x_{0})=\left(\frac{(r-1)^{(r+1)/r}(1-\pi)^{1/r}p_{0}^{1/r}\pi}{\left(p_{1}+\pi-p_{1}\pi\right)^{(r+1)/r}r}\right)^{r-1}.\nonumber
\end{align}
Note that $h'(x_{0}) \leqslant 1$, and by Lemma~\ref{lem:max_derivative_leq_1}, the probability of draw, $d_{1,1}$, equals $0$, whenever \begin{equation}
(r-1)^{r+1}(1-\pi)p_{0}\pi^{r} \leqslant \left(p_{1}+\pi-p_{1}\pi\right)^{r+1}r^{r}.\label{eq:lem_neg_bin_2}
\end{equation}

As in \S\ref{subsec:binomial} and \S\ref{subsec:Poisson}, we show that $w' \leqslant x_{0}$ by proving that $h(x_{0}) \leqslant x_{}$ for all values of $p_{-1}$, $p_{0}$ and $p_{1}$. Tedious algebraic manipulations reveal that $h(x_{0}) \leqslant x_{0}$ if and only if
\begin{align}
(r-1)^{1/r}\gamma - \frac{(r-1)^{r} \gamma^{r}}{r^{r}} \leqslant 1, \text{ where } \gamma = (1-\pi)^{1/r}p_{0}^{1/r}\pi\left(p_{1}+\pi-p_{1}\pi\right)^{-(r+1)/r}.\label{objective_neg_bin}
\end{align}
Letting $f_{2}(x) = (r-1)^{\frac{1}{r}}x - (r-1)^{r} r^{-r} x^{r}$, we see that its derivative $f'_{2}(x)=(r-1)^{\frac{1}{r}} - (r-1)^{r} r^{-(r-1)} x^{r-1}$ is strictly positive if and only if $x < r (r-1)^{-(r+1)/r}$. Therefore, $f_{2}(x)$ is strictly increasing for $0 \leqslant x < r (r-1)^{-(r+1)/r}$ and strictly decreasing for $r (r-1)^{-(r+1)/r} < x \leqslant 1$, and its maximum value, attained at $x=r (r-1)^{-(r+1)/r}$, equals $1$. This shows that \eqref{objective_neg_bin} does hold, thereby establishing that $w' \leqslant x_{0}$.

As in \S\ref{subsec:binomial} and \S\ref{subsec:Poisson}, the final step of our argument involves proving that $x_{0} < \alpha$ whenever \eqref{eq:lem_neg_bin_2} does not hold, where $\alpha$, recall, is the unique fixed point, in $[0,1]$, of the function $g$, defined in \eqref{g_h_defn_kappa=2}. This is accomplished by showing that the inequality $g_{0}(x_{0}) > x_{0}$ holds whenever \eqref{eq:lem_neg_bin_2} does not (and subsequently, making use of the fact that the curve $y=g(x)$ lies above the line $y=x$ for $x \in [0,\alpha)$, and beneath $y=x$ for $x \in (\alpha,1]$). Routine algebraic manipulations reveal that this inequality is equivalent to $(r-1)^{r+1}(1-\pi)p_{0}\pi^{r} > r^{r}\left(p_{1}+\pi-p_{1}\pi\right)^{r+1}$, which is exactly the negation of \eqref{eq:lem_neg_bin_2}. This goes to show that when \eqref{eq:lem_neg_bin_2} does not hold, we have $w' \leqslant x_{0} < \alpha$, so that $h$ has two distinct fixed points, $w'$ and $\alpha$, in $[0,1]$.

The final conclusion from the previous few paragraphs is that the probability of draw, $d_{1,1}$, equals $0$ if and only if \eqref{eq:lem_neg_bin_2} holds.

\begin{remark}
It is worthwhile to note that when we set $r=1$ in \eqref{eq:lem_neg_bin_2}, the inequality is automatically satisfied for \emph{all} values of $p_{-1}$, $p_{0}$ and $p_{1}$. This goes to show that when $\chi$ is the Geometric$(\pi)$ distribution, the probability of draw, $d_{1,1}$, is $0$ for \emph{all} values of $p_{-1}$, $p_{0}$ and $p_{1}$, and no phenomenon of phase transition is observed in this case.
\end{remark}

\subsubsection{When the offspring distribution is supported on $\{0,d\}$, for $d \in \mathbb{N}$ and $d \geqslant 2$}\label{subsec:0,d} Let $\chi(0) = 1-\pi$ and $\chi(d) = \pi$, for some $0 < \pi < 1$ and some $d \in \mathbb{N}$ with $d \geqslant 2$. In this case, the pgf is given by $G(x) = (1-\pi) + \pi x^{d}$. This yields $h'(x)=p_{0}^{2}\pi^{2}d^{2}\left[\pi(1-p_{1}) + p_{-1}(1-\pi) - p_{0}\pi x^{d}\right]^{d-1}x^{d-1}$, so that $h$ is strictly increasing for $x \in [0,x_{0})$ and strictly decreasing for $x \in (x_{0},1]$, where  $x_{0}=\left\{\pi(1-p_{1}) + p_{-1}(1-\pi)\right\}^{1/d}\left(p_{0} \pi (d+1)\right)^{-1/d}$. The maximum of $h'(x)$ is attained at $x=x_{0}$, and this maximum value equals $p_{0}^{(d+1)/d} \pi^{(d+1)/d} d^{d+1} \left\{\pi(1-p_{1}) + p_{-1}(1-\pi)\right\}^{(d+1)(d-1)/d} (d+1)^{-(d+1)(d-1)/d}$. The derivative of $h$ is bounded above by $1$ throughout the interval $[0,1]$ if and only if $h'(x_{0}) \leqslant 1$, which is equivalent to the inequality
\begin{equation}\label{eq:lem_0_d_2}
p_{0} \pi \left\{\pi(1-p_{1}) + p_{-1}(1-\pi)\right\}^{d-1} \leqslant (d+1)^{d-1}d^{-d}.
\end{equation} 
Lemma~\ref{lem:max_derivative_leq_1} then allows us to conclude that $h$ has a unique fixed point, and consequently, the probability of draw, $d_{1,1}$, equals $0$, whenever \eqref{eq:lem_0_d_2} holds.

As in \S\ref{subsec:binomial}, \S\ref{subsec:Poisson} and \S\ref{subsec:neg_bin}, we show that $w' \leqslant x_{0}$ for all values of $p_{-1}$, $p_{0}$ and $p_{1}$, by showing that $h(x_{0}) \leqslant x_{0}$. When $p_{0} > 0$, judicious algebraic manipulations lead to this inequality being equivalent to 
\begin{equation}
\gamma^{-1/d}p_{0}^{-1/d}\pi^{-1/d}(d+1)^{-1/d} + p_{0}\pi d^{d}\gamma(d+1)^{-d} > 1, \text{ where } \gamma=\left\{\pi(1-p_{1}) + p_{-1}(1-\pi)\right\}^{d-1}.\label{objective_0_d}
\end{equation}
Setting $f_{2}(x)=x^{-1/d}p_{0}^{-1/d}\pi^{-1/d}(d+1)^{-1/d} + p_{0}\pi d^{d}x(d+1)^{-d}$, we see that $f_{2}$ is strictly decreasing for $0 < x < p_{0}^{-1} \pi^{-1} d^{-d} (d+1)^{d-1}$, and strictly increasing for $p_{0}^{-1} \pi^{-1} d^{-d} (d+1)^{d-1} < x \leqslant 1$, so that the minimum value of $f_{2}(x)$ is attained at $x=p_{0}^{-1} \pi^{-1} d^{-d} (d+1)^{d-1}$, and this minimum value turns out to be equal to $1$, proving \eqref{objective_0_d} to be indeed true. 

Finally, we show that $x_{0} < \alpha$, where we recall that $\alpha$ is the unique fixed point of $g$ defined in \eqref {g_h_defn_kappa=2}, whenever \eqref{eq:lem_0_d_2} fails to hold. We show that the negation of the inequality in \eqref{eq:lem_0_d_2} is equivalent to the inequality that $g(x_{0}) > x_{0}$, so that, upon using the observation that the curve $y=g(x)$ lies above the line $y=x$ when $x \in [0,\alpha)$, and beneath $y=x$ when $x \in (\alpha,1]$), we conclude that, indeed, $x_{0} < \alpha$ when \eqref{eq:lem_0_d_2} is false. This leads to the conclusion that when \eqref{eq:lem_0_d_2} fails to hold, we have $w' \leqslant x_{0} < \alpha$, proving the existence of two distinct fixed points, $w'$ and $\alpha$, of the function $h$ in $[0,1]$.

Our final conclusion from the previous few paragraphs of argument is that the probability of draw, $d_{1,1}$, equals $0$ if and only if the inequality in \eqref{eq:lem_0_d_2} holds. 

This brings us to the end of the proof of Theorem~\ref{thm:kappa=2}.

\subsection{Proof of Theorems~\ref{thm:kappa=3} and \ref{thm:kappa=3_special}}\label{sec:kappa=3_proofs}
Recall, from \S\ref{subsec:notations_definitions}, that $w_{i,0}=w_{3,j}=1$ for each $i \in \{1,2,3\}$ and each $j \in \{0,1,2\}$, and likewise, $\ell_{i,3}=\ell_{0,j}=1$ for each $i \in \{0,1,2\}$ and each $j \in \{1,2,3\}$, when $\kappa=3$. Therefore, \eqref{recurrence_3} and \eqref{recurrence_4'} together yield, along with the same argument that is used to prove \eqref{smallest_largest_fixed_point} of Theorem~\ref{thm:1}, that $\left(\ell_{1,1},\ell_{1,2},\ell_{2,1},\ell_{2,2}\right)$ and $\left(1-w_{1,1},1-w_{1,2},1-w_{2,1},1-w_{2,2}\right)$ are both fixed points of the function $F: [0,1]^{4} \rightarrow [0,1]^{4}$, with $F(\mathbf{x})=\left(F_{1,1}(\mathbf{x}),F_{1,2}(\mathbf{x}),F_{2,1}(\mathbf{x}),F_{2,2}(\mathbf{x})\right)$ (where $\mathbf{x}=\left(x_{1,1},x_{1,2},x_{2,1},x_{2,2}\right)$), defined as follows:
\begin{align}
{}&F_{1,1}(\mathbf{x})=G\{1-p_{1}G(1-p_{1}x_{2,2}-p_{0}x_{2,1})-p_{0}G(1-p_{1}x_{1,2}-p_{0}x_{1,1})\},\label{ell_{1,1}}\\
{}&F_{1,2}(\mathbf{x})=G\{1-p_{1}G(p_{0}+p_{-1}-p_{0}x_{2,2}-p_{-1}x_{2,1})-p_{0}G(p_{0}+p_{-1}-p_{0}x_{1,2}-p_{-1}x_{1,1})\},\label{ell_{1,2}}\\
{}&F_{2,1}(\mathbf{x})=G\{p_{0}+p_{-1}-p_{0}G(1-p_{1}x_{2,2}-p_{0}x_{2,1})-p_{-1}G(1-p_{1}x_{1,2}-p_{0}x_{1,1})\},\label{ell_{2,1}}\\
{}&F_{2,2}(\mathbf{x})=G\{p_{0}+p_{-1}-p_{0}G(p_{0}+p_{-1}-p_{0}x_{2,2}-p_{-1}x_{2,1})-p_{-1}G(p_{0}+p_{-1}-p_{0}x_{1,2}-p_{-1}x_{1,1})\},\label{ell_{2,2}}
\end{align}
and if $\mathbf{y}=\left(y_{1,1},y_{1,2},y_{2,1},y_{2,2}\right)$ is any fixed point of $F$, then $\ell_{i,j} \leqslant y_{i,j} \leqslant 1-w_{i,j}$ for all $(i,j) \in \{1,2\}^{2}$. Consequently, $d_{i,j}=0$ for all $(i,j) \in \{1,2\}^{2}$ if and only if $F$ has a unique fixed point in $[0,1]^{4}$. As discussed after the statement of Theorem~\ref{thm:kappa=3}, few tools are at our disposal for finding necessary and sufficient condition(s), in terms of $p_{-1}$, $p_{0}$ and $p_{1}$ (and the parameter(s) underlying the offspring distribution under consideration), under which we may assert the uniqueness of the fixed point of $F$ in $[0,1]^{4}$. A sufficient condition is obtained, via an application of the Banach Fixed Point Theorem, when $F$ is Lipschitz with a Lipschitz constant that is strictly less than $1$. In what follows, this is what we attempt to accomplish.

\subsubsection{Suitable lower bounds on the various probabilities}\label{subsec:lower_bounds} We begin by finding lower bounds for $\ell_{i,j}$ and $w_{i,j}$, for all $(i,j) \in \{1,2\}^{2}$. Immediately, we observe that $\min\left\{\ell_{1,1},\ell_{1,2}\right\} \geqslant G\left(p_{-1}\right)$, since $P_{1}$ loses each of $\mathcal{G}(1,1,3,T)[\phi]$ and $\mathcal{G}(1,2,3,T)[\phi]$ if $\omega_{T}\left(\phi,u\right)=-1$ for every child $u$ of the root $\phi$ of the GW tree $T$. Finding lower bounds on $\ell_{2,1}$ and $\ell_{2,2}$ is a little less straightforward.

A lower bound on $\ell_{2,1}$ is provided by the probability of the event that, for every child $u$ of $\phi$,
\begin{enumerate*}
\item $\omega_{T}(\phi,u)=-1$, and
\item there exists a child $v$ of $u$ such that $\omega_{T}(u,v)\neq -1$ and $\omega_{T}(v,w)=-1$ for every child $w$ of $v$.
\end{enumerate*}
For a fixed vertex $u$, the probability that it has a child $v$ satisfying these conditions is given by $1-G\left[1-\left(1-p_{-1}\right)G\left(p_{-1}\right)\right]$. 
We thus obtain
\begin{align}
\ell_{2,1} \geqslant{}& G\left[p_{-1}\left\{1-G\left[1-\left(1-p_{-1}\right)G\left(p_{-1}\right)\right]\right\}\right].\label{ell_{2,1}_lower_bound}
\end{align}
A lower bound on $\ell_{2,2}$ is given by $\Prob\left[E_{1}\cup E_{2}\right]$, where $E_{1}$ is the event that, for every child $u$ of $\phi$, we have $\omega_{T}(\phi,u)=-1$, and there exists a child $v$ of $u$ with $\omega_{T}(v,w)=-1$ for every child $w$ of $v$, and $E_{2}$ is the event that, for every child $u$ of $\phi$, we have $\omega_{T}(\phi,u)\neq +1$ and there exists a child $v$ of $u$ with $\omega_{T}(u,v)=+1$. Similar to the derivation of \eqref{ell_{2,1}_lower_bound}, we obtain $\Prob\left[E_{1}\right]=G\left[p_{-1}\left\{1-G\left\{1-G\left(p_{-1}\right)\right\}\right\}\right]$. The probability that $u$ has at least one child $v$ with $\omega_{T}(u,v)=+1$ is given by $1-G\left(1-p_{1}\right)$, so that $\Prob\left[E_{2}\right]=G\left[\left(1-p_{1}\right)\left\{1-G\left(1-p_{1}\right)\right\}\right]$. We now have to find $\Prob\left[E_{1}\cap E_{2}\right]$. To this end, note that the probability of the event that $u$ has at least one child $v$ with $\omega_{T}(u,v)=+1$, but no child $v'$ such that $\omega_{T}(v',w)=-1$ for every child $w$ of $v'$, is given by
\begin{align}
{}&\sum_{m=0}^{\infty}\sum_{i=1}^{m}{m \choose i}\left\{\Prob\left[\omega_{T}(u,v)=+1 \text{ and there exists a child } w \text{ of } v \text{ with } \omega_{T}(v,w) \neq -1\right]\right\}^{i}\nonumber\\&\left\{\Prob\left[\omega_{T}(u,v)\in \{0,-1\} \text{ and there exists a child } w \text{ of } v \text{ with } \omega_{T}(v,w) \neq -1\right]\right\}^{m-i}\nonumber\\
={}&G\left[1-G\left(p_{-1}\right)\right]-G\left[\left(1-p_{1}\right)\left\{1-G\left(p_{-1}\right)\right\}\right].\nonumber
\end{align}
Thus, the probability that $u$ has a child $v$ with $\omega_{T}(u,v)=+1$ and a child $v'$ with $\omega_{T}(v',w)=-1$ for each child $w$ of $v'$, is given by $1-G\left(1-p_{1}\right)-G\left[1-G\left(p_{-1}\right)\right]+G\left[\left(1-p_{1}\right)\left\{1-G\left(p_{-1}\right)\right\}\right]$. This yields $\Prob\left[E_{1}\cap E_{2}\right]=G\left[p_{-1}\left[1-G\left(1-p_{1}\right)-G\left[1-G\left(p_{-1}\right)\right]+G\left[\left(1-p_{1}\right)\left\{1-G\left(p_{-1}\right)\right\}\right]\right]\right]$. We thus obtain
\begin{align}
\ell_{2,2} \geqslant{}& G\left[p_{-1}\left\{1-G\left\{1-G\left(p_{-1}\right)\right\}\right\}\right]+G\left[\left(1-p_{1}\right)\left\{1-G\left(1-p_{1}\right)\right\}\right]\nonumber\\&-G\left[p_{-1}\left[1-G\left(1-p_{1}\right)-G\left[1-G\left(p_{-1}\right)\right]+G\left[\left(1-p_{1}\right)\left\{1-G\left(p_{-1}\right)\right\}\right]\right]\right].\label{ell_{2,2}_lower_bound}
\end{align}

If $\phi$ has a child $u$ with $\omega_{T}(\phi,u)=+1$, then $P_{1}$ wins each of $\mathcal{G}(2,1,3,T)[\phi]$ and $\mathcal{G}(2,2,3,T)[\phi]$, so that $\min\left\{w_{2,1},w_{2,2}\right\} \geqslant 1-G\left(1-p_{1}\right)$. A lower bound for $w_{1,1}$ is provided by $\Prob\left[E_{3}\cup E_{4}\right]$, where $E_{3}$ is the event that $\phi$ has a child $u$ with $\omega_{T}(\phi,u) \neq -1$ and $\omega_{T}(u,v)=-1$ for every child $v$ of $u$, and $E_{4}$ is the event that $\phi$ has a child $u$ with $\omega_{T}(\phi,u)=1$, and for every child $v$ of $u$, there exists a child $w$ of $v$ such that $\omega_{T}(v,w)=1$. It is straightforward to see that $\Prob\left[E_{3}\right]=1-G\left[1-\left(1-p_{-1}\right)G\left(p_{-1}\right)\right]$. The task of finding $\Prob\left[E_{4}\cap E_{3}^{c}\right]$ is a relatively complicated one. For a fixed vertex $u$, let $A_{u}$ be the event that for every child $v$ of $u$, there exists a child $w$ of $v$ such that $\omega_{T}(v,w)=1$, so that $\Prob\left[A_{u}\right]=G\left\{1-G\left(1-p_{1}\right)\right\}$, and let $B_{u}$ be the event that there exists a child $v'$ of $u$ with $\omega_{T}(u,v') \neq -1$, so that $\Prob\left[B_{u}\right]=1-G\left(p_{-1}\right)$. We note that 
\begin{align}
\Prob\left[A_{u} \cap B_{u}\right]={}&\Prob\left[A_{u}\right]-\Prob\left[A_{u}\cap B_{u}^{c}\right]=G\left\{1-G\left(1-p_{1}\right)\right\}-G\left[p_{-1}\left\{1-G\left(1-p_{1}\right)\right\}\right],\nonumber
\end{align} 
so that $\Prob\left[E_{4} \cap E_{3}^{c}\right]$ equals
\begin{align}
{}&\sum_{m=0}^{\infty}\sum_{(i,j,k,m-i-j-k) \in \mathbb{N} \times \mathbb{N}_{0}^{3}}{m \choose i,j,k,m-i-j-k}\left\{p_{1}\Prob\left[A_{u} \cap B_{u}\right]\right\}^{i}\left\{p_{1}\Prob\left[A_{u}^{c}\cap B_{u}\right]\right\}^{j}\left\{p_{0}\Prob\left[B_{u}\right]\right\}^{k}p_{-1}^{m-i-j-k}\chi(m)\nonumber\\
={}&G\left[\left(1-p_{-1}\right)\left\{1-G\left(p_{-1}\right)\right\}+p_{-1}\right]-G[\left(1-p_{-1}\right)\left\{1-G\left(p_{-1}\right)\right\}+p_{-1}-p_{1}G\left\{1-G\left(1-p_{1}\right)\right\}\nonumber\\&+p_{1}G\left[p_{-1}\left\{1-G\left(1-p_{1}\right)\right\}\right]].\nonumber
\end{align}
Combining the above, we obtain
\begin{align}
w_{1,1} \geqslant{}& 1-G\left[1-\left(1-p_{-1}\right)G\left(p_{-1}\right)\right]+G\left[\left(1-p_{-1}\right)\left\{1-G\left(p_{-1}\right)\right\}+p_{-1}\right]-G[\left(1-p_{-1}\right)\left\{1-G\left(p_{-1}\right)\right\}\nonumber\\&+p_{-1}-p_{1}G\left\{1-G\left(1-p_{1}\right)\right\}+p_{1}G\left[p_{-1}\left\{1-G\left(1-p_{1}\right)\right\}\right]].\label{w_{1,1}_lower_bound}
\end{align}
Finally, $w_{1,2}$ is bounded below by the probability of the event that $\phi$ has at least one child $u$ with $\omega_{T}(\phi,u)=+1$, such that for every child $v$ of $u$, $\omega_{T}(u,v) \neq +1$ and there exists a child $w$ of $v$ with $\omega_{T}(v,w)=+1$. For a fixed vertex $u$, the probability that, for every child $v$ of $u$, $\omega_{T}(u,v)\neq +1$ and $v$ has a child $w$ with $\omega_{T}(v,w)=+1$, is given by $G\left[\left(1-p_{1}\right)\left\{1-G\left(1-p_{1}\right)\right\}\right]$. Consequently,
\begin{align}
w_{1,2}\geqslant 1-G\left[1-p_{1}G\left[\left(1-p_{1}\right)\left\{1-G\left(1-p_{1}\right)\right\}\right]\right].\label{w_{1,2}_lower_bound}
\end{align}

The bounds derived above, such as in \eqref{ell_{2,1}_lower_bound}, \eqref{ell_{2,2}_lower_bound}, \eqref{w_{1,1}_lower_bound} and \eqref{w_{1,2}_lower_bound}, after a comparison with the notation introduced in the statement of Theorem~\ref{thm:kappa=3}, reveal that $\ell_{i,j} \geqslant A_{i,j}$ and $\left(1-w_{i,j}\right) \leqslant B_{i,j}$ for each $(i,j) \in \{1,2\}^{2}$. From the discussion right after \eqref{ell_{2,2}}, it is enough for us to consider $F$ to be defined on $\left[A_{1,1},B_{1,1}\right]\times\left[A_{1,2},B_{1,2}\right]\times\left[A_{2,1},B_{2,1}\right]\times\left[A_{2,2},B_{2,2}\right]$. This is what is utilized in \S\ref{subsec:F_contraction}.

\subsubsection{Finding regime(s) where the function $F$ is a contraction}\label{subsec:F_contraction}
In what follows, we change the notation introduced just before Theorem~\ref{thm:kappa=3} slightly, and let $\partial_{i,j}F_{i',j'}(\mathbf{x})$, for any $\mathbf{x}=(x_{1,1},x_{1,2},x_{2,1},x_{2,2})\in [0,1]^{4}$, denote the partial derivative of $F_{i',j'}(\mathbf{x})$ with respect to $x_{i,j}$, for each $i,j,i',j' \in \{1,2\}$.

We consider any $\mathbf{x}=\left(x_{1,1},x_{1,2},x_{2,1},x_{2,2}\right)$ and $\mathbf{y}=\left(y_{1,1},y_{1,2},y_{2,1},y_{2,2}\right)$, where each of $x_{i,j}$ and $y_{i,j}$ belongs to the interval $\left[A_{i,j},B_{i,j}\right]$, for each $(i,j) \in \{1,2\}^{2}$.  Applying the mean value theorem to \eqref{ell_{1,1}}, we know that there exists $\pmb{\xi}=\left(\xi_{1,1},\xi_{1,2},\xi_{2,1},\xi_{2,2}\right)$, lying on the line connecting $\mathbf{x}$ and $\mathbf{y}$, such that $F_{1,1}(\mathbf{x})-F_{1,1}(\mathbf{y})=\nabla F_{1,1}(\pmb{\xi}) \cdot (\mathbf{x}-\mathbf{y})$. Since $\xi_{i,j} \in \left[A_{i,j},B_{i,j}\right]$ for each $(i,j) \in \{1,2\}^{2}$ as well, and both $G$ and $G'$ are increasing, we have
\begin{align}
\partial_{1,1}F_{1,1}(\pmb{\xi})={}&p_{0}^{2}G'(1-p_{1}\xi_{1,2}-p_{0}\xi_{1,1})G'\{1-p_{1}G(1-p_{1}\xi_{2,2}-p_{0}\xi_{2,1})-p_{0}G(1-p_{1}\xi_{1,2}-p_{0}\xi_{1,1})\}\nonumber\\
\leqslant{}&p_{0}^{2}G'(1-p_{1}A_{1,2}-p_{0}A_{1,1})G'\{1-p_{1}G(1-p_{1}B_{2,2}-p_{0}\xi_{2,1})-p_{0}G(1-p_{1}B_{1,2}-p_{0}B_{1,1})\}.\nonumber
\end{align}
Applying this and similarly derived upper bounds on $\partial_{1,2}F_{1,1}(\pmb{\xi})$, $\partial_{2,1}F_{1,1}(\pmb{\xi})$ and $\partial_{2,2}F_{1,1}(\pmb{\xi})$, we find that $\left|F_{1,1}(\mathbf{x})-F_{1,1}(\mathbf{y})\right|\leqslant \sum_{(i,j)\in\{1,2\}^{2}}\partial_{i,j}F_{1,1}(\pmb{\xi})\left|x_{i,j}-y_{i,j}\right|$ is bounded above by
\begin{align}
{}&\big[p_{0}\left\{p_{0}\left|x_{1,1}-y_{1,1}\right|+p_{1}\left|x_{1,2}-y_{1,2}\right|\right\}G'\left(1-p_{1}A_{1,2}-p_{0}A_{1,1}\right)+p_{1}\left\{p_{0}\left|x_{2,1}-y_{2,1}\right|+p_{1}\left|x_{2,2}-y_{2,2}\right|\right\}\nonumber\\&G'\left(1-p_{1}A_{2,2}-p_{0}A_{2,1}\right)\big]G'\left\{1-p_{1}G\left(1-p_{1}B_{2,2}-p_{0}B_{2,1}\right)-p_{0}G\left(1-p_{1}B_{1,2}-p_{0}B_{1,1}\right)\right\}.\nonumber
\end{align}
Each of the remaining differences, i.e.\ $\left|F_{1,2}(\mathbf{x})-F_{1,2}(\mathbf{y})\right|$, $\left|F_{2,1}(\mathbf{x})-F_{2,1}(\mathbf{y})\right|$ and $\left|F_{2,2}(\mathbf{x})-F_{2,2}(\mathbf{y})\right|$, can be bounded above in an analogous manner, so that we obtain 
\begin{align}
\sum_{(i,j) \in \{1,2\}^{2}}\left|F_{i,j}(\mathbf{x})-F_{i,j}(\mathbf{y})\right| \leqslant \sum_{(i,j) \in \{1,2\}^{2}}E_{i,j}\left|x_{i,j}-y_{i,j}\right|,\nonumber
\end{align}
where $E_{i,j}$, for each $(i,j) \in \{1,2\}^{2}$, is as defined in the statement of Theorem~\ref{thm:kappa=3}.

\subsubsection{Proof of part \eqref{kappa=3_part_1} of Theorem~\ref{thm:kappa=3_special}} We now come to the proof of part \eqref{kappa=3_part_1} of Theorem~\ref{thm:kappa=3}. Since the underlying tree is the rooted $2$-regular tree, denoted henceforth by $T_{2}$, the pgf of the offspring distribution is simply $G(x)=x^{2}$. When, in addition, $p_{-1}$, $p_{0}$ and $p_{1}$ satisfy \eqref{special_form}, straightforward algebraic manipulations reveal that $\ell_{2,2}=\alpha^{-2}\left(\sqrt{\ell_{1,2}}-p_{-1}\right)^{2}$ and $\ell_{2,1}=\alpha^{-2}\left(\sqrt{\ell_{1,1}}-p_{-1}\right)^{2}$. Incorporating these two relations into \eqref{ell_{1,1}} and \eqref{ell_{1,2}}, we see that $\left(\sqrt{\ell_{1,1}},\sqrt{\ell_{1,2}}\right)$ is a fixed point of the function $f: [0,1]^{2}\rightarrow [0,1]^{2}$, defined as $f(y_{1},y_{2}) = \left(f_{1}(y_{1},y_{2}), f_{2}(y_{1},y_{2})\right)$, where
\begin{align}
{}& f_{1}(y_{1},y_{2})=1-\frac{\alpha^{2}}{\alpha^{2}+\alpha+1}\left\{1-\frac{1}{\alpha^{2}+\alpha+1}\left(y_{2}-\frac{1}{\alpha^{2}+\alpha+1}\right)^{2} - \frac{1}{\alpha\left(\alpha^{2}+\alpha+1\right)}\left(y_{1}-\frac{1}{\alpha^{2}+\alpha+1}\right)^{2}\right\}^{2}\nonumber\\&-\frac{\alpha}{\alpha^{2}+\alpha+1}\left\{1-\frac{\alpha^{2}y_{2}^{2}}{\alpha^{2}+\alpha+1}-\frac{\alpha y_{1}^{2}}{\alpha^{2}+\alpha+1}\right\}^{2},\nonumber\\
{}&f_{2}(y_{1},y_{2})=1-\frac{\alpha^{2}}{\alpha^{2}+\alpha+1}\Bigg\{1-\frac{\alpha^{2}}{\alpha^{2}+\alpha+1}-\frac{1}{\alpha\left(\alpha^{2}+\alpha+1\right)}\left(y_{2}-\frac{1}{\alpha^{2}+\alpha+1}\right)^{2}\nonumber\\&-\frac{1}{\alpha^{2}\left(\alpha^{2}+\alpha+1\right)}\left(y_{1}-\frac{1}{\alpha^{2}+\alpha+1}\right)^{2}\Bigg\}^{2}-\frac{\alpha}{\alpha^{2}+\alpha+1}\left(1-\frac{\alpha^{2}}{\alpha^{2}+\alpha+1}-\frac{\alpha y_{2}^{2}}{\alpha^{2}+\alpha+1}-\frac{y_{1}^{2}}{\alpha^{2}+\alpha+1}\right)^{2}.\nonumber
\end{align}
In a similar manner, we deduce that $\left(\sqrt{1-w_{1,1}},\sqrt{1-w_{1,2}}\right)$ is yet another fixed point of the function $f$, and in fact, arguing the same way as in the proof of \eqref{smallest_largest_fixed_point} of Theorem~\ref{thm:1}, we can prove that if $(x_{1},x_{2})$ is any fixed point of $f$ with $x_{1}, x_{2} \in [0,1]$, then $\left(\sqrt{\ell_{1,1}},\sqrt{\ell_{1,2}}\right) \preceq (x_{1},x_{2}) \preceq \left(\sqrt{1-w_{1,1}},\sqrt{1-w_{1,2}}\right)$. This tells us that $d_{1,1}=d_{1,2}=0$ (and by Theorem~\ref{thm:2}, this is equivalent to $d_{2,1}=d_{2,2}=0$, since each of $p_{-1}$, $p_{0}$ and $p_{1}$ is strictly positive when they satisfy \eqref{special_form}) if and only if $f$ has a unique fixed point in $[0,1]^{2}$. 

Our goal is to now find the set of values of $\alpha$ (where $\alpha$ is as in \eqref{special_form}) for which the function $f$ turns out to be Lipschitz, with the Lipschitz constant being strictly less than $1$, on the domain $[p_{-1},1]^{2}$. Algebraic manipulations reveal that, for all $y_{1}, y_{2}, z_{1}, z_{2} \in [p_{-1},1]$,
\begin{align}
{}&\left|f_{1}(y_{1},y_{2})-f_{1}(z_{1},z_{2})\right| \leqslant \left|y_{2}-z_{2}\right|\Bigg[\frac{\alpha^{2}}{\left(\alpha^{2}+\alpha+1\right)^{2}}\left(y_{2}+z_{2}-\frac{2}{\alpha^{2}+\alpha+1}\right)(2-A)+\frac{\alpha^{3}}{\left(\alpha^{2}+\alpha+1\right)^{2}}(y_{2}+z_{2})(2-B)\Bigg] \nonumber\\&+ \left|y_{1}-z_{1}\right|\Bigg[\frac{\alpha}{\left(\alpha^{2}+\alpha+1\right)^{2}}\left(y_{1}+z_{1}-\frac{2}{\alpha^{2}+\alpha+1}\right)(2-A)+\frac{\alpha^{2}}{\left(\alpha^{2}+\alpha+1\right)^{2}}(y_{1}+z_{1})(2-B)\Bigg]\label{intermediate_1}\\
{}&\left|f_{2}(y_{1},y_{2})-f_{2}(z_{1},z_{2})\right|\leqslant \left|y_{2}-z_{2}\right|\Bigg[\frac{\alpha}{\left(\alpha^{2}+\alpha+1\right)^{2}}\left(y_{2}+z_{2}-\frac{2}{\alpha^{2}+\alpha+1}\right)\left(2-\frac{2\alpha^{2}}{\alpha^{2}+\alpha+1}-\frac{A}{\alpha}\right)\nonumber\\&+\frac{\alpha^{2}}{\left(\alpha^{2}+\alpha+1\right)^{2}}(y_{2}+z_{2})\left(2-\frac{2\alpha^{2}}{\alpha^{2}+\alpha+1}-\frac{B}{\alpha}\right)\Bigg]+\left|y_{1}-z_{1}\right|\Bigg[\frac{1}{\left(\alpha^{2}+\alpha+1\right)^{2}}\left(y_{1}+z_{1}-\frac{2}{\alpha^{2}+\alpha+1}\right)\nonumber\\&\left(2-\frac{2\alpha^{2}}{\alpha^{2}+\alpha+1}-\frac{A}{\alpha}\right)+\frac{\alpha}{\left(\alpha^{2}+\alpha+1\right)^{2}}(y_{1}+z_{1})\left(2-\frac{2\alpha^{2}}{\alpha^{2}+\alpha+1}-\frac{B}{\alpha}\right)\Bigg],\label{intermediate_2}
\end{align} 
where 
\begin{align}
{}&A=\frac{1}{\alpha^{2}+\alpha+1}\left(y_{2}-\frac{1}{\alpha^{2}+\alpha+1}\right)^{2}+\frac{1}{\alpha^{2}+\alpha+1}\left(z_{2}-\frac{1}{\alpha^{2}+\alpha+1}\right)^{2}+\frac{1}{\alpha\left(\alpha^{2}+\alpha+1\right)}\left(y_{1}-\frac{1}{\alpha^{2}+\alpha+1}\right)^{2}\nonumber\\&+\frac{1}{\alpha\left(\alpha^{2}+\alpha+1\right)}\left(z_{1}-\frac{1}{\alpha^{2}+\alpha+1}\right)^{2} \text{ and } B=\frac{\alpha^{2}y_{2}^{2}}{\alpha^{2}+\alpha+1}+\frac{\alpha^{2}z_{2}^{2}}{\alpha^{2}+\alpha+1}+\frac{\alpha y_{1}^{2}}{\alpha^{2}+\alpha+1}+\frac{\alpha z_{1}^{2}}{\alpha^{2}+\alpha+1}.\nonumber
\end{align}
Adding \eqref{intermediate_1} and \eqref{intermediate_2}, using the simple bound of $A \geqslant 0$, and the inequality that $B \geqslant 2(\alpha^{2}+\alpha)\left(\alpha^{2}+\alpha+1\right)^{-3}$ (which is true since each of $y_{1}$, $y_{2}$, $z_{1}$ and $z_{2}$ is bounded below by $p_{-1}$), we obtain:
\begin{align}
{}&\left|f_{1}(y_{1},y_{2})-f_{1}(z_{1},z_{2})\right|+\left|f_{2}(y_{1},y_{2})-f_{2}(z_{1},z_{2})\right| \leqslant 4\Bigg[\frac{\alpha^{3}(\alpha+1)}{\left(\alpha^{2}+\alpha+1\right)^{3}}+\frac{\alpha^{3}}{\left(\alpha^{2}+\alpha+1\right)^{2}}\left\{1-\frac{\alpha^{2}+\alpha}{\left(\alpha^{2}+\alpha+1\right)^{3}}\right\}\nonumber\\&+\frac{\alpha^{2}(\alpha+1)^{2}}{\left(\alpha^{2}+\alpha+1\right)^{4}} + \frac{\alpha^{2}(\alpha+1)}{\left(\alpha^{2}+\alpha+1\right)^{3}}\left\{1-\frac{1}{\left(\alpha^{2}+\alpha+1\right)^{2}}\right\}\Bigg]\left|y_{2}-z_{2}\right|+4\Bigg[\frac{\alpha^{2}(\alpha+1)}{\left(\alpha^{2}+\alpha+1\right)^{3}}+\frac{\alpha^{2}}{\left(\alpha^{2}+\alpha+1\right)^{2}}\nonumber\\&\left\{1-\frac{\alpha^{2}+\alpha}{\left(\alpha^{2}+\alpha+1\right)^{3}}\right\}+\frac{\alpha(\alpha+1)^{2}}{\left(\alpha^{2}+\alpha+1\right)^{4}}+\frac{\alpha(\alpha+1)}{\left(\alpha^{2}+\alpha+1\right)^{3}}\left\{1-\frac{1}{\left(\alpha^{2}+\alpha+1\right)^{2}}\right\}\Bigg]\left|y_{1}-z_{1}\right|,\label{Lipschitz_ineq}
\end{align}
and we see that the coefficient of $\left|y_{2}-z_{2}\right|$ in the right side of \eqref{Lipschitz_ineq} is strictly less than $1$ whenever $0 \leqslant \alpha < 0.727384$ or $\alpha > 2.57162$, whereas the coefficient of $\left|y_{1}-z_{1}\right|$ in the right side of \eqref{Lipschitz_ineq} is strictly less than $1$ whenever $0 \leqslant \alpha < 0.242915$ or $\alpha > 1.21108$. We thus conclude that as long as $0 \leqslant \alpha < 0.242915$ or $\alpha > 2.57162$, the function $f$ is Lipschitz with a Lipschitz constant that is strictly less than $1$, implying that $f$ has a unique fixed point in $[p_{-1},1]^{2}$, and consequently, we have $d_{i,j}=0$ for each $(i,j) \in \{1,2\}^{2}$.

\subsubsection{Proof of part \eqref{kappa=3_part_2} of Theorem~\ref{thm:kappa=3_special}} We now come to the proof of part \eqref{kappa=3_part_2} of Theorem~\ref{thm:kappa=3}. Since $p_{0}=0$, \eqref{ell_{1,1}}, \eqref{ell_{1,2}}, \eqref{ell_{2,1}} and \eqref{ell_{2,2}} together yield:
\begin{enumerate}
\item $\ell_{1,1}$ is the smallest non-negative fixed point and $\left(1-w_{1,1}\right)$ the largest non-negative fixed point, bounded above by $1$, of the function $b^{(2)}\circ a^{(2)}$,
\item whereas $\ell_{1,2}$ is the smallest non-negative fixed point and $\left(1-w_{1,2}\right)$ the largest non-negative fixed point, bounded above by $1$, of the function $b\circ a^{(2)} \circ b$,
\end{enumerate}
where the functions $a$ and $b$ are as defined in the statement of part \eqref{kappa=3_part_2} of Theorem~\ref{thm:kappa=3_special}. Consequently, $d_{1,1}=0$ if and only if the function $b^{(2)} \circ a^{(2)}$ has a unique fixed point in $[0,1]$, and $d_{1,2}=0$ if and only if the function $b\circ a^{(2)} \circ b$ has a unique fixed point in $[0,1]$. From part \eqref{draw_4} of Theorem~\ref{thm:2}, we see that $d_{1,1}=0$ if and only if $d_{2,2}=0$ (likewise, $d_{1,2}=0$ if and only if $d_{2,1}=0$), so that it suffices for us to characterize those values of $p_{-1}$ (note that $p_{1}=1-p_{-1}$ here, so we are really only dealing with a single parameter here) for which $d_{1,1}=0$ (respectively, $d_{1,2}=0$).

As in the case of \eqref{kappa=3_part_1}, the goal here is to find those values of $p_{-1}$ for which the function $b^{(2)}\circ a^{(2)}$ (respectively, the function $b\circ a^{(2)}\circ b$) is Lipschitz on $[0,1]$ with a Lipschitz constant that is strictly less than $1$. As in \eqref{kappa=3_part_1}, an immediate lower bound for each of $\ell_{1,1}$ and $\ell_{1,2}$ is obtained when we consider the scenario where $\omega(\phi,u)=-1$ for every child $u$ of the root $\phi$ of $T_{\chi}$, with the probability of this event being $\sum_{m=0}^{\infty}p_{-1}^{m}\chi(m)=G\left(p_{-1}\right)$. It thus suffices for us to focus on the domain $\left[G\left(p_{-1}\right),1\right]$ for each of the functions $b$ and $a$. We then deduce, using the mean value theorem and the monotonically increasing nature of the derivative $G'$ of the pgf, for any $G(p) \leqslant x < y \leqslant 1$, the inequalities:
\begin{align}
{}&a(x)-a(y) \leqslant p_{-1}G'\left\{p_{-1}-p_{-1}G\left(p_{-1}\right)\right\}(y-x),\label{g_Lipschitz}\\
{}&b(x)-b(y) \leqslant \left(1-p_{-1}\right)G'\left\{1-\left(1-p_{-1}\right)G\left(p_{-1}\right)\right\}(y-x).\label{h_Lipschitz}
\end{align}
This allows us to write, for all $G(p_{-1}) \leqslant x < y \leqslant 1$, the inequality:
\begin{multline}
\left|b^{(2)}\left(a^{(2)}(x)\right)-b^{(2)}\left(a^{(2)}(y)\right)\right|\leqslant \left[\left(1-p_{-1}\right)G'\left\{1-\left(1-p_{-1}\right)G\left(p_{-1}\right)\right\}\right]^{2}\\ \left[p_{-1}G'\left\{p_{-1}-p_{-1}G\left(p_{-1}\right)\right\}\right]^{2}(y-x),\nonumber
\end{multline}
and this same upper bound also applies to the difference $\left|b\left(a^{(2)}\left(b(x)\right)\right)-b\left(a^{(2)}\left(b(y)\right)\right)\right|$. Therefore, as long as we have
\begin{align}\label{Lipschitz_cond}
p_{-1}\left(1-p_{-1}\right)G'\left\{1-\left(1-p_{-1}\right)G\left(p_{-1}\right)\right\}G'\left\{p_{-1}-p_{-1}G\left(p_{-1}\right)\right\} < 1,
\end{align}
each of the functions $b^{(2)}\circ a^{(2)}$ and $b\circ a^{(2)} \circ h$ is Lipschitz on $\left[G\left(p_{-1}\right),1\right]$, with a Lipschitz constant that is strictly less than $1$, implying that each of them has a unique fixed point in $\left[G\left(p_{-1}\right),1\right]$ and making each of $d_{1,1}$ and $d_{1,2}$ equal $0$. This brings us to the end of the proof of part \eqref{kappa=3_part_2} of Theorem~\ref{thm:kappa=3}.

\subsection{Proof of Corollary~\ref{cor_1}}
\begin{proof}
	First consider the case when $\chi$ is the Dirac measure at 2 implying $G(x)=x^2$. We complete the proof by showing that
	\begin{equation}\label{eq_cor_kappa=3_part_2}
		p_{-1}\left(1-p_{-1}\right) G'\left[1-\left(1-p_{-1}\right)G\left(p_{-1}\right)\right]G'\left[p_{-1}\left\{1-G\left(p_{-1}\right)\right\}\right]<1
	\end{equation}
	 for all $p_{-1}\in [0,1]$ as stated in Theorem \ref{thm:kappa=3_special} (\ref{kappa=3_part_2}). If $p_{-1}\in \{0,1\}$, (\ref{eq_cor_kappa=3_part_2}) holds. Suppose $p_{-1}\in (0,1)$. Then, 
	\begin{align*}
	&	p_{-1}\left(1-p_{-1}\right) G'\left[1-\left(1-p_{-1}\right)G\left(p_{-1}\right)\right]G'\left[p_{-1}\left\{1-G\left(p_{-1}\right)\right\}\right] \\
	=\hspace{2mm}& 4p_{-1}\left(1-p_{-1}\right) \left[1-\left(1-p_{-1}\right)p_{-1}^2\right]\left[p_{-1}\left\{1-p_{-1}^2\right\}\right] \\
	<\hspace{2mm}& 4p_{-1}\left(1-p_{-1}\right) \leqslant 1. 
	\end{align*}
	This concludes that all the draw probabilities are $0$.
	
	Now, when $\chi$ is Poisson$(2)$, $G(x)=e^{2(x-1)}$. Again, if  $p_{-1}\in \{0,1\}$, (\ref{eq_cor_kappa=3_part_2}) holds. If $p_{-1}\in (0,1)$.  Then,
	\begin{align*}
		&	p_{-1}\left(1-p_{-1}\right) G'\left[1-\left(1-p_{-1}\right)G\left(p_{-1}\right)\right]G'\left[p_{-1}\left\{1-G\left(p_{-1}\right)\right\}\right] \\
		=\hspace{2mm}& 4p_{-1}\left(1-p_{-1}\right) e^{2\left[1-\left(1-p_{-1}\right)e^{2(p_{-1}-1)}-1\right]}e^{2\left[p_{-1}\left\{1-e^{2(p_{-1}-1)}\right\}-1\right]} \\
		=\hspace{2mm}& 4p_{-1}\left(1-p_{-1}\right) e^{-2\left[\left(1-p_{-1}\right)e^{-2(1-p_{-1})}\right]}e^{-2\left[1-p_{-1}\left\{1-e^{-2(1-p_{-1})}\right\}\right]} \\
		<\hspace{2mm}& 4p_{-1}\left(1-p_{-1}\right)\leqslant 1
	\end{align*}
	where the strict inequality holds as both $e^{-2\left[\left(1-p_{-1}\right)e^{-2(1-p_{-1})}\right]}$ and $e^{-2\left[1-p_{-1}\left\{1-e^{-2(1-p_{-1})}\right\}\right]}$ are less than $1$.
\end{proof}


\noindent \textbf{Data availability statement:} We hereby confirm that our manuscript has no associated data. 

\bibliography{Toll_tax_bib}
\end{document}